\documentclass[11pt]{amsart}

\usepackage[letterpaper,margin=.6in]{geometry}
\usepackage[colorlinks,linkcolor=blue,citecolor=black!75!red]{hyperref}
\usepackage[matrix,arrow,ps,color,line,curve,frame,all]{xy}


\usepackage{tikz, tikz-cd}
\usetikzlibrary{matrix,arrows,decorations.pathmorphing, fit}
\tikzset{myboxgroup/.style={draw, densely dotted}} 

\usepackage{multirow}

\makeatletter
\renewcommand*\env@matrix[1][*\c@MaxMatrixCols c]{%
  \hskip -\arraycolsep
  \let\@ifnextchar\new@ifnextchar
  \array{#1}}
\makeatother


\usepackage{amsmath,amsthm,stmaryrd}
\usepackage{cleveref}



\newtheorem{lemma}{Lemma}[section]
\newtheorem{theorem}[lemma]{Theorem}
\newtheorem*{theorem*}{Theorem}
\newtheorem{proposition}[lemma]{Proposition}
\newtheorem{corollary}[lemma]{Corollary}

\theoremstyle{definition}

\newtheorem{definitionnodiamond}[lemma]{Definition}
\newtheorem{examplenodiamond}[lemma]{Example}
\newtheorem{remarknodiamond}[lemma]{Remark}
\newtheorem{conventionnodiamond}[lemma]{Convention}
\newtheorem{notationnodiamond}[lemma]{Notation}

\newenvironment{definition}{\begin{definitionnodiamond}}{\hfill\ensuremath\blacklozenge\end{definitionnodiamond}}

\newenvironment{remark}{\begin{remarknodiamond}}{\hfill\ensuremath\blacklozenge\end{remarknodiamond}}


\makeatletter
\let\xx@thm\@thm
\AtBeginDocument{\let\@thm\xx@thm}
\makeatother



\numberwithin{equation}{section}

\newcounter{stepofproof}


\crefname{section}{Section}{Sections}
\crefformat{section}{#2section~#1#3}
\Crefformat{section}{#2Section~#1#3}

\crefname{subsection}{}{Subsections}
\crefformat{subsection}{\S#2#1#3}
\Crefformat{subsection}{\S#2#1#3}

\crefname{definition}{Definition}{Definitions}
\crefformat{definition}{#2Definition~#1#3}
\Crefformat{definition}{#2Definition~#1#3}

\crefname{definitionnodiamond}{Definition}{Definitions}
\crefformat{definitionnodiamond}{#2Definition~#1#3}
\Crefformat{definitionnodiamond}{#2Definition~#1#3}

\crefname{example}{Example}{Examples}
\crefformat{example}{#2Example~#1#3}
\Crefformat{example}{#2Example~#1#3}

\crefname{examplenodiamond}{Example}{Examples}
\crefformat{examplenodiamond}{#2Example~#1#3}
\Crefformat{examplenodiamond}{#2Example~#1#3}

\crefname{remark}{Remark}{Remarks}
\crefformat{remark}{#2Remark~#1#3}
\Crefformat{remark}{#2Remark~#1#3}

\crefname{remarknodiamond}{Remark}{Remarks}
\crefformat{remarknodiamond}{#2Remark~#1#3}
\Crefformat{remarknodiamond}{#2Remark~#1#3}

\crefname{convention}{Convention}{Conventions}
\crefformat{convention}{#2Convention~#1#3}
\Crefformat{convention}{#2Convention~#1#3}

\crefname{notation}{Notation}{Notations}
\crefformat{notation}{#2Notation~#1#3}
\Crefformat{notation}{#2Notation~#1#3}

\crefname{notationnodiamond}{Notation}{Notations}
\crefformat{notationnodiamond}{#2Notation~#1#3}
\Crefformat{notationnodiamond}{#2Notation~#1#3}

\crefname{lemma}{Lemma}{Lemmas}
\crefformat{lemma}{#2Lemma~#1#3}
\Crefformat{lemma}{#2Lemma~#1#3}

\crefname{proposition}{Proposition}{Propositions}
\crefformat{proposition}{#2Proposition~#1#3}
\Crefformat{proposition}{#2Proposition~#1#3}

\crefname{corollary}{Corollary}{Corollaries}
\crefformat{corollary}{#2Corollary~#1#3}
\Crefformat{corollary}{#2Corollary~#1#3}

\crefname{theorem}{Theorem}{Theorems}
\crefformat{theorem}{#2Theorem~#1#3}
\Crefformat{theorem}{#2Theorem~#1#3}

\crefname{assumption}{Assumption}{Assumptions}
\crefformat{assumption}{#2Assumption~#1#3}
\Crefformat{assumption}{#2Assumption~#1#3}

\crefname{enumi}{}{}
\crefformat{enumi}{(#2#1#3)}
\Crefformat{enumi}{(#2#1#3)}

\crefname{equation}{}{}
\crefformat{equation}{(#2#1#3)}
\Crefformat{equation}{(#2#1#3)}

\crefname{align}{}{}
\crefformat{align}{(#2#1#3)}
\Crefformat{align}{(#2#1#3)}

\crefname{proofstep}{Step}{Steps}
\crefformat{proofstep}{#2Step~#1#3}
\Crefformat{proofstep}{#2Step~#1#3}

\crefname{table}{Table}{Tables}
\crefformat{table}{#2Table~#1#3}
\Crefformat{table}{#2Table~#1#3}



\newcommand\arXiv[1]{\href{http://arxiv.org/abs/#1}{\nolinkurl{arXiv:#1}}}
\newcommand\MRnumber[1]{\href{http://www.ams.org/mathscinet-getitem?mr=#1}{\nolinkurl{MR#1}}}
\newcommand\DOI[1]{\href{http://dx.doi.org/#1}{\nolinkurl{DOI:#1}}}
\newcommand\MAILTO[1]{\href{mailto:#1}{\nolinkurl{#1}}}


\usepackage{amsfonts,amssymb}

\newcommand\fd{\mathfrak d}

\newcommand\fm{\mathfrak m}
\newcommand\fp{\mathfrak p}

\def\sM{{\sf M}}
\def\sfM{{\sf M}}

\def\sff{{\sf f}}
\def\sfg{{\sf g}}
\def\sfp{{\sf p}}
\def\sfu{{\sf u}}
\def\sfv{{\sf v}}
\def\sfw{{\sf w}}
\def\sfx{{\sf x}}
\def\sfy{{\sf y}}

\newcommand\bP{\mathbb P}

\newcommand\bZ{\mathbb Z}


\def\a{\alpha}
\def\b{\beta}
\def\c{\gamma}
\def\d{\delta}

\def\l{\lambda}

\def\s{\sigma}

\def\G{\Gamma}

\def\FF{{\mathbb F}}

\def\PP{{\mathbb P}}

\def\NN{{\mathbb N}}
\def\PP{{\mathbb P}}

\def\ZZ{{\mathbb Z}}

\newcommand\cA{\mathcal A}

\newcommand\cC{\mathcal C}
\newcommand\cD{\mathcal D}

\newcommand\cO{\mathcal O}

\newcommand\cR{\mathcal R}

\newcommand\cV{\mathcal V}


\def\Hom{\operatorname{Hom}}
\def\Ext{\operatorname{Ext}}
\def\RHom{\operatorname{RHom}}
\def\Aut{\operatorname{Aut}}

\DeclareMathOperator\Spec{Spec}

\newcommand\gldim{\mathrm{gldim}}
\newcommand\id{\mathrm{id}}

\def\GL{{\sf GL}}
\def\Gr{{\sf Gr}}
\def\QGr{\operatorname{\sf QGr}}
\def\Tors{\operatorname{\sf Tors}}
\def\pdim{{\operatorname{pdim}}}

\def\Proj{\operatorname{Proj}}

\def\Mod{{\sf Mod}}
\def\pd{{\operatorname{\partial}}}

\renewcommand\lim{\varprojlim}

\DeclareMathOperator{\diag}{diag}
\DeclareMathOperator{\hdet}{hdet}


\newcommand{\isoto}{\xrightarrow{\smash{\raisebox{-0.25em}{$\sim$}}}}

\def\hdot{{\:\raisebox{2pt}{\text{\circle*{1.5}}}}}


\numberwithin{equation}{section}




\title[New AS-regular algebras via normal extensions]
{New Artin-Schelter regular and Calabi-Yau algebras via normal extensions}

\author[Alex Chirvasitu]{Alex Chirvasitu}
\author[Ryo Kanda]{Ryo Kanda} 	
\author[S. Paul Smith]{S. Paul Smith}

\address[Alex Chirvasitu]{Department of Mathematics, University at
  Buffalo, Buffalo, NY 14260-2900, USA.}  \email{achirvas@buffalo.edu}

\address[Ryo Kanda]{Department of Mathematics, Graduate School of
  Science, Osaka University, Toyonaka, Osaka, 560-0043, Japan; and
  Department of Mathematics, Box 354350, University of Washington,
  Seattle, WA 98195, USA.}  \email{ryo.kanda.math@gmail.com}

\address[S. Paul Smith]{Department of Mathematics, Box 354350,
  University of Washington, Seattle, WA 98195, USA.}
\email{smith@math.washington.edu}

\keywords{Normal extension, Artin-Schelter regular algebra, Calabi-Yau algebra, superpotential algebra, non-commutative algebraic geometry}

\subjclass[2010]{14A22, 16S38, 16W50, 16W20}

\begin{document}

\pagenumbering{arabic}

\begin{abstract}
We introduce a new method to construct 4-dimensional Artin-Schelter regular algebras as normal extensions of (not necessarily noetherian) 3-dimensional ones. 
The method produces large classes of new 4-dimensional Artin-Schelter regular algebras.
When applied to a 3-Calabi-Yau algebra our method produces a flat family of central extensions of it that are 4-Calabi-Yau, and all 4-Calabi-Yau central extensions
having the same generating set as the original 3-Calabi-Yau algebra arise in this way. 
Each normal extension has the same generators as the original 3-dimensional algebra, and its relations consist of all but one of the relations for the original algebra and an equal number of new relations determined by ``the missing one'' and a tuple of scalars satisfying some numerical conditions. 
We determine the Nakayama automorphisms of the 4-dimensional algebras obtained by our method and as a consequence show that their 
homological determinant is 1. This supports the conjecture in \cite{MS16} that the homological determinant of the Nakayama automorphism is 1 for all 
Artin-Schelter regular connected graded algebras. Reyes-Rogalski-Zhang proved this is true in the noetherian case  \cite[Cor.~5.4]{RRZ17}.
\end{abstract}

\maketitle

\setcounter{tocdepth}{1} 
\tableofcontents


\section{Introduction}\label{se.intro}

\subsection{Notation}
Throughout this paper we fix a field $\Bbbk$. All vector spaces will be $\Bbbk$-vector
spaces, all categories will be $\Bbbk$-linear, and all functors will be $\Bbbk$-linear.
All algebras will be finitely presented $\NN$-graded $\Bbbk$-algebras.  Such an algebra, $A=A_0 \oplus A_1 \oplus \cdots$,
is {\sf connected} if $A_0=\Bbbk$. All graded algebras in this paper
are connected and generated by their degree-one components.  Thus, we
are concerned with algebras $TV/(R)$ where $TV$ denotes the tensor
algebra on a finite dimensional vector space $V$ placed in degree $1$ 
and $(R)$ denotes the ideal generated by a finite dimensional graded
subspace $R$ of $TV$ whose elements have degrees $\ge 2$.

The algebras we study are not required to be noetherian or to have
finite Gelfand-Kirillov dimension.

\subsection{}
The assumption that a non-commutative algebra has finite global
dimension does not imply that it satisfies good analogues of the
Cohen-Macaulay and Gorenstein properties. For example, all free
algebras on a finite positive number of generators have global
dimension 1; for all $d \ge 0$, there are finite dimensional algebras having global dimension $d$; 
for all $d \ge 1$, there are algebras of global dimension $d$ whose only ideals are $\{0\}$ and the rings themselves;
and so on.  However, for graded algebras there are two ways to
strengthen the finite global dimension hypothesis that have proven
sufficiently effective that they lead to a good non-commutative
analogue of projective algebraic geometry.

The algebras satisfying these stronger definitions are called Artin-Schelter regular and twisted
Calabi-Yau algebras, respectively. The definitions vary a little from one
paper to another but in this paper they are defined in such a way
that they are, in fact, the same (see \cref{ssect.CY}).  A particularly important subset of these algebras are the Calabi-Yau algebras: 
their place among all twisted Calabi-Yau algebras is analogous to that occupied by the symmetric
algebras among all finite dimensional Frobenius algebras.

Many Artin-Schelter regular and Calabi-Yau algebras are known. The
3-dimensional Artin-Schelter regular algebras (meaning those of global dimension 3) that have finite
Gelfand-Kirillov dimension were classified by Artin-Schelter
\cite{AS87} and Artin-Tate-Van den Bergh \cite{ATV1}. Using those
classifications, the 3-dimensional Calabi-Yau algebras of finite
Gelfand-Kirillov dimension were classified by Mori-Smith \cite{ms-cy}
and Mori-Ueyama \cite{mu-cy}. A bewildering variety of 4-dimensional
Artin-Schelter regular and Calabi-Yau algebras are known but no
classification is in sight.
The construction of new examples is an important on-going research goal.

\subsection{The main result}

This paper introduces a new method for producing 4-dimensional
Artin-Schelter regular and Calabi-Yau algebras from 3-dimensional ones.

Before stating an approximate version of our main result we need the
following terminology.  An element $\Omega$ in a ring $D$ is {\sf
  normal} if $\Omega D=D\Omega$, and is {\sf regular} if it is not a zero-divisor.  Following the terminology in
\cite{LBSvdB}, we call  $D$ a {\sf normal extension by $\Omega$} 
of a ring $A$ if there is a regular normal element $\Omega$
in $D$ such that $A \cong D/(\Omega)$. If, in addition, $\Omega$ belongs to the center of $D$ we call $D$ a  {\sf central extension} of $A$.

Now let $A=TV/(R)$ be a $3$-dimensional Artin-Schelter regular algebra. Since $A$ is generated by its degree-one component we can, without loss of generality, 
assume that $R\subseteq V^{\otimes m}$ for some $m\geq 2$ (see \cref{ssect.mKoszul}). We do that.
Moreover,  Dubois-Violette proved  (see \cref{thm.DV}) that there is a twisted superpotential $\sfw\in V^{\otimes m+1}$ such that $A$ is the algebra $A(\sfw):=TV/(\pd_{1}\sfw,\ldots,\pd_{n}\sfw)$ where the $\pd_{i}\sfw$ are the partial derivatives of $\sfw$ with respect to a basis $x_{1},\ldots,x_{n}$ for $V$. Such $\sfw$ is uniquely determined up to scalar multiplication and the definition of $A(\sfw)$ does not depend on the choice of basis for $V$. See \S\S \ref{subsec.superpotential} and \ref{subsec.not.cycder} for details on twisted superpotentials.

With this notation, our main result is as follows.

\begin{theorem}
[Approximate version of \cref{th.3-stg}]
\label{thm.main}
Let $A=TV/(R)$ be a $3$-dimensional Artin-Schelter regular algebra and fix a basis $x_{1},\ldots,x_{n}$ for $V$. Let $\sfw\in V^{\otimes(m+1)}$ be a twisted superpotential such that $A=TV/(\pd_{1}\sfw,\ldots,\pd_{n}\sfw)$.
Fix an index $1\le k \le n$, let
$\sfp=(p_1,\ldots,p_n)$ be a ``good'' tuple of non-zero scalars, let
$D(\sfw,\sfp)$ be the free algebra $TV$ modulo the $2(n-1)$
relations
$$
\pd_i\sfw \;=\; x_i (\pd_k\sfw) - p_i (\pd_k \sfw) x_i \;=\; 0 \qquad (i=1,\ldots,n, \ i \ne k), 
$$
and let $\Omega$ denote the image of $\pd_k\sfw$ in $D(\sfw,\sfp)$. 
\begin{enumerate}
\item
$D(\sfw,\sfp)$ is a 4-dimensional Artin-Schelter regular algebra and is a normal extension of 
$A=D(\sfw,\sfp)/(\Omega)$.
   \item 
  $D(\sfw,\sfp)$ is noetherian if and only if $A$ is.
  \item 
  $D(\sfw,\sfp)$ has finite Gelfand-Kirillov dimension if and only if $A$ does.
    \item 
  $D(\sfw,\sfp)$ is a domain if $A$ is.
\end{enumerate}
\end{theorem}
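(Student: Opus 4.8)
The plan is to reduce the whole theorem to the single assertion that $\Omega$ is a regular normal element of $D(\sfw,\sfp)$ of degree $m$, after which every clause follows from standard normal-extension bookkeeping. The identification $A\cong D(\sfw,\sfp)/(\Omega)$ is immediate from comparing presentations: killing $\Omega=\pd_k\sfw$ adjoins the one missing relation $\pd_k\sfw=0$ and trivializes the $n-1$ normalizing relations $x_i\Omega-p_i\Omega x_i$, leaving exactly $TV/(\pd_1\sfw,\dots,\pd_n\sfw)=A$. For normality, the $2(n-1)$ defining relations already give $x_i\Omega=p_i\Omega x_i$ for $i\ne k$, so only the case $i=k$ must be produced. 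I would extract it from the twisted-superpotential identity: writing $\sfw=\sum_i x_i\otimes\pd_i\sfw=\sum_i(\sfw\pd_i)\otimes x_i$ and using that the right derivatives $\sfw\pd_i$ are obtained from the left derivatives through the matrix of the twisting automorphism $\phi$, one gets in $TV$ an identity $\sum_i x_i\,\pd_i\sfw=\sum_i(\pd_i\sfw)\,\psi(x_i)$ with $\psi$ invertible and built from $\phi$. Imposing $\pd_i\sfw=0$ for $i\ne k$ collapses both sides to $x_k\Omega=\Omega\,\psi(x_k)$; the good-tuple conditions, which align the fixed basis with $\phi$, give $\psi(x_k)=p_k x_k$ and hence the final relation $x_k\Omega=p_k\Omega x_k$. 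Since now $x_i\Omega=p_i\Omega x_i$ for every $i$, multiplicativity gives $D\Omega=\Omega D$, so $\Omega$ is normal with normalizing automorphism rescaling each $x_i$ by $p_i$.

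The substantive content is that $\Omega$ is a non-zero-divisor and that $D$ has global dimension $4$ with the AS-Gorenstein symmetry, and in the stated generality (no noetherian or finite-GK hypothesis) this regularity is the main obstacle. My approach is to build the minimal free resolution of $\Bbbk$ over $D$ explicitly. The $m$-Koszul superpotential structure of $A$ supplies a self-dual length-$3$ resolution
\[
0\to A(-(m+1))\to A(-m)^{n}\to A(-1)^{n}\to A\to\Bbbk\to0,
\]
whose differentials are given by the $x_i$ and by the derivative matrix of $\sfw$. Lifting these maps to $D$ and splicing in multiplication by $\Omega$ (a mapping-cone construction) produces a candidate length-$4$ complex of free $D$-modules whose terms carry the Betti numbers $1,n,2(n-1),n,1$; that it is a complex is precisely the assertion that the $2(n-1)$ relations hold. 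Exactness in the middle I would get by comparison with the exact $A$-resolution via the short exact sequence $0\to D(-m)\xrightarrow{\Omega}D\to A\to0$; the remaining issue is exactness at the two ends, equivalent to left and right regularity of $\Omega$. I would settle this by a Hilbert-series squeeze: the displayed sequence gives the coefficientwise bound $H_D\le H_A/(1-t^m)$, while the self-dual length-$4$ complex gives the reverse bound, the two coinciding exactly when $\Omega$ is regular. (Alternatively, a PBW/Gröbner basis for the mixed-degree relations of $D$ yields both $H_D=H_A/(1-t^m)$ and regularity simultaneously.)

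With $\Omega$ regular and normal, I would finish part (1) by a change-of-rings computation. Because $\Omega$ is a regular normal element of positive degree and $\gldim A=3$, the global dimension rises by exactly one, so $\gldim D=4$. For the Gorenstein condition, the bimodule sequence $0\to D(-m)\xrightarrow{\Omega}D\to A\to0$ and the associated change-of-rings long exact sequence relate $\Ext^{i}_D(\Bbbk,D)$ to $\Ext^{i-1}_A(\Bbbk,A)$; since the latter is concentrated in homological degree $3$ and one-dimensional there, $\Ext^{i}_D(\Bbbk,D)$ is concentrated in degree $4$ and one-dimensional, which is exactly AS-regularity of dimension $4$. Tracking the internal shift shows the Gorenstein parameter of $D$ equals that of $A$ plus $\deg\Omega$, namely $(m+1)+m=2m+1$, and tracking the twist by the normalizing automorphism along the way is what will later yield the Nakayama automorphism advertised in the abstract.

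Parts (2) and (3) are then formal. For (2), if $D$ is noetherian so is its quotient $A$; conversely, in the connected graded setting $\Omega$ has positive degree, and regularity and normality give $\Omega^{j}D/\Omega^{j+1}D\cong A(-jm)$ for all $j$, so an ascending chain of (right, resp. left) ideals in $D$ can be lifted from $A$ through the $\Omega$-adic filtration, whence $A$ noetherian forces $D$ noetherian. For (3), regularity of $\Omega$ upgrades the bound of the second paragraph to the identity $H_D(t)=H_A(t)/(1-t^m)$; dividing by $1-t^m$ raises the order of growth by exactly one, so $\operatorname{GKdim}D=\operatorname{GKdim}A+1$, which is finite if and only if $\operatorname{GKdim}A$ is.
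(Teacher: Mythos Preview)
Your overall architecture matches the paper's closely: reduce everything to regularity of $\Omega$; get normality of $\Omega$ from the twisted-superpotential identity $\sum_i x_i f_i=\sum_i q_i f_i x_i$ (this is exactly \Cref{le.stg-cntr}); build the explicit length-$4$ complex of free $D$-modules with Betti numbers $1,n,2(n-1),n,1$; verify it is a complex using the ``good'' condition (this is \Cref{le.cplx}); and once $\Omega$ is regular, lift AS-regularity along the normal extension via \Cref{pr.enh} and \Cref{le.hom}. Parts (2) and (3) are handled in the paper by \Cref{le.growth} precisely along the lines you sketch.

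The genuine gap is the ``Hilbert-series squeeze.'' The right-exact sequence $D(-m)\to D\to A\to 0$ indeed gives $H_D\le H_A/(1-t^m)$ coefficientwise, but merely having the length-$4$ sequence be a \emph{complex} does not give the reverse inequality. The Euler-characteristic identity coming from the complex is
\[
H_D(t)\bigl(1-nt+(n-1)t^m+(n-1)t^{m+1}-nt^{2m}+t^{2m+1}\bigr)=1-h_2(t)+h_3(t)-h_4(t),
\]
and since the signs on the unknown homologies alternate you cannot read off $H_D\ge H_A/(1-t^m)$. Your appeal to the short exact sequence $0\to D(-m)\xrightarrow{\Omega}D\to A\to 0$ for ``exactness in the middle'' is circular for the same reason: injectivity on the left is exactly regularity of $\Omega$. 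The PBW/Gr\"obner alternative is also unavailable in this generality; the paper explicitly notes that the algebras $D(\sfw,\sfp)$ need not admit PBW bases.

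What the paper does instead is a degree-by-degree induction that breaks this circularity. Set $e_k=d'_k-d_k$ (the defect from the target Hilbert series) and $z_k=\dim\{a\in D_k:a\Omega=0\}$. \Cref{le.zqq} shows, using the exact $A$-resolution and the commutation of $\Omega$ past $\sfM_l$ (which follows from the good condition \cref{eq:sk}), that vanishing of $(h_3)_{k-m}$ and of $z_{k-2m},z_{k-2m-1}$ forces $(h_3)_k=0$. \Cref{le.ez0} then feeds this into the Euler-characteristic identity to get $e_k=0$ and $z_{k+1-2m}=0$, closing the induction. This simultaneous control of $h_3$ and the annihilator $Z$ is the missing idea; once all $e_k=0$ you have $H_D=H_A/(1-t^m)$, hence $\Omega$ regular by \Cref{pr.enh}, and the rest of your outline goes through.
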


In \Cref{thm.main}, the algebra $A$ has $n$ generators and $n$ defining relations, all of degree $m$, and the algebra $D(\sfw,\sfp)$
has $n$ generators and $2n-2$ relations, $n-1$ of degree $m$ and $n-1$ of degree $m+1$.

It is well-known that items (2)-(4) follow from (1) so the real content of \Cref{thm.main} is (1).

The condition that the tuple $\sfp$ is ``good'' can be stated in several ways, perhaps the simplest of which is 
\Cref{th.3-stg}\cref{item.eigenvec} (see \cref{re.gen}).

It is not immediately obvious that there are {\it any} ``good''  $\sfp$. However, as we show in \S\ref{sect.examples}, there {\it are} good $\sfp$'s for 
almost all the generic 3-dimensional Artin-Schelter regular algebras found by Artin and Schelter. Furthermore, if $A$ is a 3-Calabi-Yau algebra, then 
$\sfp=(1,\ldots,1)$ is good and in that case $D(\sfw,\sfp)$ is 4-dimensional Calabi-Yau algebra and is a central extension of $A$.

\begin{corollary}
[\cref{cor.cy}]
\label{cor.main}
  Let $TV/(R)=A(\sfw)$ be a 3-Calabi-Yau algebra and fix an index $1\leq k\leq n$. Then  $TV$ modulo the relations
  \begin{equation*}
    \pd_i\sfw \;=\;  [x_i,\partial_k{\sfw}] \;=\; 0 \qquad (i=1,\ldots,n,\ i\neq k)
  \end{equation*}
  is a $4$-Calabi-Yau algebra and is a central   extension of $A(\sfw)$ by the image of $\pd_{k}\sfw$. 
  \end{corollary}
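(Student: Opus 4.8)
The plan is to deduce the corollary from \Cref{th.3-stg} by specializing to the tuple $\sfp=(1,\dots,1)$ and then strengthening its two conclusions: normality of $\Omega$ to centrality, and Artin-Schelter regularity of $D(\sfw,\sfp)$ to the Calabi-Yau property. The $4$-dimensionality and the normal-extension structure are then inherited for free from the main theorem, so the only genuinely new work is the two upgrades.

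First I would verify that $\sfp=(1,\dots,1)$ is a ``good'' tuple whenever $A$ is $3$-Calabi-Yau. Being Calabi-Yau means the Nakayama automorphism of $A$ is the identity, which in the language of superpotentials says that $\sfw$ is genuinely cyclically invariant rather than merely twisted-cyclic. Under the eigenvector criterion of \Cref{th.3-stg}\cref{item.eigenvec}, the triviality of the twist makes $(1,\dots,1)$ an eigenvector of the relevant operator, hence a good tuple. Granting this, \Cref{th.3-stg} applies verbatim: $D(\sfw,\sfp)$ is $4$-dimensional Artin-Schelter regular and a normal extension of $A$ by the image $\Omega$ of $\pd_k\sfw$. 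Setting every $p_i=1$ turns each defining relation $x_i(\pd_k\sfw)-p_i(\pd_k\sfw)x_i$ into the commutator $[x_i,\pd_k\sfw]$, so the presentation of $D(\sfw,\sfp)$ is exactly the one in the statement.

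Next I would upgrade normality to centrality. The relations already give $[x_i,\Omega]=0$ for all $i\ne k$, so it suffices to check $[x_k,\Omega]=0$, and this is where cyclicity of $\sfw$ enters decisively. Because $\sfw$ is cyclically invariant, reindexing its monomials according to the cyclic shift yields the Euler-type identity $\sum_{i=1}^{n}x_i(\pd_i\sfw)=\sum_{i=1}^{n}(\pd_i\sfw)x_i$ in $TV$, both sides being equal to $\sfw$. Passing to $D(\sfw,\sfp)$, where $\pd_i\sfw=0$ for every $i\ne k$, annihilates all but the $i=k$ term on each side and leaves $x_k\Omega=\Omega x_k$. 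Combined with the $i\ne k$ relations, this shows $\Omega$ is central, so $D(\sfw,\sfp)$ is a central extension of $A(\sfw)$.

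Finally I would identify the Calabi-Yau property. \Cref{th.3-stg} already records the Nakayama automorphism of $D(\sfw,\sfp)$ as an explicit expression in the Nakayama automorphism of $A$ and the tuple $\sfp$; specializing to $\sfp=(1,\dots,1)$ and to the Calabi-Yau case, where the Nakayama automorphism of $A$ is the identity, reduces this expression to the identity automorphism. Since an Artin-Schelter regular algebra with trivial Nakayama automorphism is by definition Calabi-Yau (\cref{ssect.CY}), $D(\sfw,\sfp)$ is $4$-Calabi-Yau. I expect the centrality step to be the only real obstacle: it is the single place where one must exploit the cyclic (as opposed to merely twisted) structure of $\sfw$ through the Euler identity, whereas the regularity, normality, and Nakayama data are all direct specializations of the main theorem.
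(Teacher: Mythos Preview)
Your proposal is correct and matches the paper's approach essentially step for step: the paper observes (just before \Cref{cor.cy}) that $3$-Calabi-Yau forces $Q=\id_V$ so that $\sfp=(1,\dots,1)$ satisfies condition \cref{item.eigenvec}, derives centrality via the same Euler-type identity (this is \Cref{le.stg-cntr} specialized to $q_k=1$), and obtains the Calabi-Yau property from the explicit Nakayama formula. One small correction: the Nakayama automorphism of $D(\sfw,\sfp)$ is not recorded in \Cref{th.3-stg} itself but in \Cref{pr.nak-3} (or, more abstractly, in \Cref{pr.nak-ad}), so your final step should cite one of those results rather than the main theorem.
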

  
  Although the terminology ``central extension'' and ``normal extension'' follows that in \cite{LBSvdB} there is no overlap between this paper and that one.
  In this paper the natural map $D \to A$ from one of the normal or central extensions in \cref{thm.main} is an isomorphism in degree 1,
   i.e., $A$ and $D$ have the ``same''   generating sets, whereas the corresponding map $D \to A$ in \cite{LBSvdB} has a 1-dimensional kernel in degree 1.
   
\subsection{A flat family of 4-Calabi-Yau algebras}   
Although it is not  immediately apparent, if $A(\sfw)$ is fixed, then \cref{cor.main} can be used to produce 
a flat family of 4-Calabi-Yau algebras over $\PP^{n-1}$ that map onto $A(\sfw)$. 
To do this we must state  \cref{cor.main} in a basis-free way. 

Let $V$ be an $n$-dimensional $\Bbbk$-vector space  and let $m \in \ZZ_{\ge 1}$. For each $x \in V$, let $x^\perp:=\{\psi \in V^* \; | \; \psi(x)=0\}$.
Define   $\langle -,-\rangle : V^* \times V^{\otimes (m+1)} \to V^{\otimes m}$ by $\langle \psi,\sfw \rangle:=(\psi \otimes \id^{\otimes m})(\sfw)$.

\begin{theorem}
\label{thm2}
Let $A(\sfw)$ be a 3-Calabi-Yau algebra where $\sfw\in V^{\otimes (m+1)}$ is a superpotential. For each $x \in V$, let $I_x$ be the ideal in $TV$ generated by 
$\{\langle \psi,\sfw \rangle \; | \; \psi \in x^\perp\}$ and $ \{[y, \langle \psi,\sfw \rangle] \; | \; y \in V, \, \psi \in V^*\}$. Define $D_x=TV/I_x$. By definition $D_{0}=A(\sfw)$.
\begin{enumerate}
	\item\label{thm2.flatfamily} $\{D_x \; | \; x \in \PP^{n-1}=\PP(V)\}$ is a 
flat family of  $4$-Calabi-Yau algebras  having
degree-$m$ central regular elements $\Omega_x$ such that  $A(\sfw) = D_x/(\Omega_x)$ for all $x\in V$.
	\item\label{thm2.allcyext} All degree-$m$ central extensions of $A(\sfw)$ are obtained in this way and they are $4$-Calabi-Yau (\cref{th.allcyext}).
\end{enumerate}
\end{theorem}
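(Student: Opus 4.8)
The plan is to deduce both parts from the already-established basis-dependent results—Corollary~\ref{cor.main} for the Calabi-Yau property of a single fibre, and Theorem~\ref{th.3-stg} (the precise version of Theorem~\ref{thm.main}) for the normal-extension structure—and then upgrade the pointwise statements to a flat family over $\PP(V)$. The first task is to check that the intrinsic description $D_x=TV/I_x$ agrees, after choosing a basis adapted to $x$, with the algebra produced by Corollary~\ref{cor.main}. Concretely, I would fix $x\in V$ nonzero and complete it to a basis $x_1,\ldots,x_n$ of $V$ with $x_k=x$ for the distinguished index $k$; writing $\psi_1,\ldots,\psi_n$ for the dual basis, one has $\langle\psi_i,\sfw\rangle=\pd_i\sfw$ and $x^\perp=\operatorname{span}\{\psi_i : i\neq k\}$. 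Thus the generators $\{\langle\psi,\sfw\rangle : \psi\in x^\perp\}$ of $I_x$ are exactly $\pd_i\sfw$ for $i\neq k$, and the commutator generators $[y,\langle\psi,\sfw\rangle]$ span the same ideal as the $n$ elements $[x_i,\pd_k\sfw]$ together with relations that are already consequences of $\pd_i\sfw=0$ for $i\neq k$. Verifying that these two generating sets for $I_x$ coincide is the bookkeeping step that makes $D_x$ basis-free; once it is done, Corollary~\ref{cor.main} immediately gives that each $D_x$ with $x\neq 0$ is a $4$-Calabi-Yau central extension of $A(\sfw)$ by $\Omega_x:=$ image of $\langle\psi,\sfw\rangle$, and the case $x=0$ gives $D_0=A(\sfw)$ by definition.

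For part~\cref{thm2.flatfamily}, the essential new content beyond the single-fibre statement is \emph{flatness} of the family over $\PP^{n-1}$. My approach is to exhibit a uniform Hilbert series: since Theorem~\ref{th.3-stg} shows each $D_x$ ($x\neq0$) is a $4$-dimensional AS-regular algebra on $n$ generators with $n-1$ relations in degree $m$ and $n-1$ in degree $m+1$, its Hilbert series is determined by that numerical data and hence is \emph{independent of $x$}. The strategy is then to set up the total family as a graded $\cO_{\PP(V)}$-algebra $\cD=\bigoplus_d \cD_d$ whose fibre at $[x]$ is $D_x$, and to argue that each graded piece $\cD_d$ is a locally free $\cO_{\PP(V)}$-module of the (constant) rank $\dim_\Bbbk (D_x)_d$. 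Flatness of a graded algebra over a base is equivalent to each graded component being flat, and for a projective base local freeness of a coherent sheaf with constant fibre dimension is what one must check; the constancy of fibre dimensions supplied by the uniform Hilbert series is precisely the input that upgrades upper-semicontinuity of $\dim_\Bbbk(D_x)_d$ to local constancy, hence to flatness.

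The main obstacle will be this flatness argument, because AS-regularity is established fibrewise and does not automatically propagate to the total space: one must rule out jumps in the dimensions of the relation spaces or the higher syzygies as $[x]$ varies. I expect the cleanest route is to present $\cD$ by relations whose coefficients are polynomial (indeed linear, after the identification above) in the homogeneous coordinates of $\PP(V)$, giving a presentation of $\cD_d$ as the cokernel of an $\cO_{\PP(V)}$-linear map between trivial bundles; then the constancy of the cokernel rank—i.e., the constant Hilbert series—forces the cokernel to be locally free by the standard criterion that a coherent sheaf presented as a cokernel is locally free iff its fibre dimension is locally constant. The key is that the relations $\langle\psi,\sfw\rangle$ and $[y,\langle\psi,\sfw\rangle]$ depend on $x$ only through the choice of hyperplane $x^\perp$ and the commutator direction $x$, both of which vary algebraically over $\PP(V)$, so no special or degenerate fibres appear for $x\neq0$.

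Finally, part~\cref{thm2.allcyext}—that \emph{every} degree-$m$ central extension of $A(\sfw)$ arises as some $D_x$—I would handle by the converse direction already recorded as Theorem~\ref{th.allcyext}. The idea is that a central extension $D$ of $A(\sfw)$ with the same degree-one generating set is determined by a central regular element $\Omega\in D_m$ lifting a relation of $A(\sfw)$; since the space of relations of $A(\sfw)$ in degree $m$ is spanned by the partial derivatives $\pd_i\sfw=\langle\psi_i,\sfw\rangle$, the class of $\Omega$ corresponds to a functional, equivalently to a point $x\in V$ with $\Omega\equiv\langle\psi,\sfw\rangle$, and the requirement that $\Omega$ be central reproduces exactly the commutator relations defining $I_x$. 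Matching this classification against the parametrization by $\PP(V)$ closes the loop, and I would lean on Theorem~\ref{th.allcyext} for the precise statement that these data are in bijection.
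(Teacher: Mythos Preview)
Your proposal is correct and follows essentially the same route as the paper: identify $D_x$ with the algebra of \Cref{cor.cy} by choosing a basis in which $x$ is a distinguished generator, deduce a uniform Hilbert series, and use that constancy to get flatness of the total family (the paper makes this last step precise in \Cref{re.fam} by constructing the sheaf $\cD$ explicitly and invoking \cite[Lemma~II.8.9]{Hart}); part \cref{thm2.allcyext} is likewise deferred to \Cref{th.allcyext}. The only point where the paper is slightly sharper than your sketch is the verification that the ideal $I_x$ contains $[x_k,\partial_k\sfw]$: this uses the superpotential identity $\sum_i [x_i,\partial_i\sfw]=0$ in $TV$, which you gesture at but should state explicitly.
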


In \S\ref{sect.examples}, we give some explicit examples to show that
not every normal extension of a 3-Calabi-Yau algebra need be a central
extension.

\subsection{}\label{subse.fl-nc}

Given part \cref{thm2.flatfamily} of \Cref{thm2} above, it is natural to consider the algebras $D_x$ as homogeneous coordinate rings
of a well-behaved family of non-commutative schemes. In order to make this precise, we need some
preparation.

Let $\cD$ be the sheaf of $\cO_{\PP(V)}$-algebras on $\PP(V)$ defined
in \Cref{subse.fam} below.  The fiber of $\cD$ over a $\Bbbk$-point
$x \in \PP(V)$ is the algebra $D_x$ defined in \Cref{thm2}.  For each
open subscheme $U\subseteq \PP(V)$ we denote by $\cD_U$ the
restriction of $\cD$ to $U$.

\begin{definition}\label{def.fam}
If $D$ is a connected graded $\Bbbk$-algebra we write $\Gr(D)$ for the category of $\ZZ$-graded left $D$-modules
and $\QGr(D)$ for the quotient category $\Gr(D)/\Tors(D)$ where $\Tors(D)$ is the full subcategory of 
graded $D$-modules that are the sum of their finite dimensional submodules.

Similarly,   $\Gr(\cD_{U})$ is the category of graded quasi-coherent sheaves of $\cD_U$-modules and $\QGr(\cD_{U})$ is the 
quotient  by the subcategory $\Tors(\cD_{U})$ consisting of  modules  whose fibers are in $\Tors(D_x)$ for all $x\in \PP(V)$.
\end{definition}

In \cite[Defn.~3.2]{lvdb-def}, Lowen and Van den Bergh introduced the notion of a flat abelian $R$-linear
category over a commutative ring $R$.

\begin{theorem}\label{th.fl-nc}
  For every open affine $\Spec(R)=U\subseteq  \PP(V)$, the $R$-linear abelian
  category $\QGr(\cD_U)$ is flat over $R$ in the sense of \cite{lvdb-def}.
  We can express this by saying that $\QGr(\cD)$ is flat over $\PP(V)$.
\end{theorem}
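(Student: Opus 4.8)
The plan is to reduce the assertion to the flatness of the family $\cD$ proved in \Cref{thm2}\cref{thm2.flatfamily} and then to exhibit a flat \emph{presentation} of the quotient category. Flatness over $R$ is local on $\PP(V)$, so it is enough to treat a single affine $U=\Spec R$, which is already the form of the statement. For the criterion I would use that, by \cite[Definition 3.2]{lvdb-def} and its standard reformulation in terms of linear categories, an $R$-linear Grothendieck category is flat over $R$ as soon as it is equivalent to $\Mod(\fa)$ for a small $R$-linear category $\fa$ all of whose $\Hom$-spaces $\fa(a,b)$ are flat $R$-modules; the prototype is $\Mod(R)\simeq\mathrm{Qcoh}(\Spec R)$ itself. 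Thus the whole task becomes the construction of such an $\fa$ with $\QGr(\cD_U)\simeq\Mod(\fa)$.

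I would first dispose of the ambient category $\Gr(\cD_U)$. By \Cref{thm2}\cref{thm2.flatfamily} the fibres $D_x$ share a common Hilbert series, so each graded piece $(\cD_U)_d$ is a finitely generated projective, hence flat, $R$-module. Now $\Gr(\cD_U)$ is the module category $\Mod(\hat\cD)$ over the $\ZZ$-graded ringoid $\hat\cD$ whose objects are the integers and whose morphism spaces are $\hat\cD(i,j)=(\cD_U)_{j-i}$. Every one of these is $R$-flat, so this already presents $\Gr(\cD_U)$ as flat over $R$.

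To descend to $\QGr(\cD_U)=\Gr(\cD_U)/\Tors(\cD_U)$ I would take the ringoid $\fa$ built from the ample sequence $\cO(d)$ of images of the free modules $\cD_U(d)$, with $\fa(d,e)=\Hom_{\QGr(\cD_U)}(\cO(d),\cO(e))$, and check that $\QGr(\cD_U)\simeq\Mod(\fa)$. The heart of the matter is the identification $\fa(d,e)=(\cD_U)_{e-d}$. This is where the hypotheses are used: each fibre $D_x$ is $4$-dimensional Artin--Schelter regular, so its local cohomology is concentrated in top degree and $H^0_{\fm}=H^1_{\fm}=0$; consequently the saturation $\Gamma_*(\cO):=\bigoplus_k\Hom_{\QGr(\cD_U)}(\cO,\cO(k))$ coincides with $\cD_U$. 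Running this uniformly over $U$ is legitimate because $\Tors(\cD_U)$ was \emph{defined} fibrewise in \Cref{def.fam}, so that forming $\QGr$ commutes with restriction to each fibre. With $\fa(d,e)=(\cD_U)_{e-d}$ again $R$-flat, $\QGr(\cD_U)\simeq\Mod(\fa)$ is flat over $R$.

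The step I expect to be the real obstacle is exactly this uniformity: that discarding $\fm$-torsion, equivalently forming $\Gamma_*$, commutes with the base changes $R\to k(x)$ and preserves $R$-flatness. This is the graded non-commutative analogue of cohomology-and-base-change, and it is the Artin--Schelter regularity of \emph{every} fibre --- giving a uniform cohomological-dimension bound and the $\chi$-conditions --- that forces the low local cohomology to vanish simultaneously across $\Spec R$, with no jumping of the $\Gamma_*(\cO)_k$ away from the flat modules $(\cD_U)_k$. If a direct $\Gamma_*$ computation becomes unwieldy, I would instead package this as a localization principle: $\Tors(\cD_U)$ is, by its fibrewise definition, a localizing subcategory flatly embedded in the flat category $\Gr(\cD_U)$, and I would verify (or cite from \cite{lvdb-def}) that flatness over $R$ passes to such quotients. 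Either way the content lies in controlling the torsion uniformly over the base rather than in any individual fibre.
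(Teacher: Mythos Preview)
Your first two steps match the paper exactly: flatness of the graded pieces $(\cD_U)_d$ from \Cref{thm2}\cref{thm2.flatfamily}, and flatness of $\Gr(\cD_U)$ via the $\bZ$-object ringoid with $\Hom(i,j)=(\cD_U)_{j-i}$ (the paper cites \cite[Proposition~3.7]{lvdb-def} for this).

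For the passage to $\QGr$, the paper takes precisely your \emph{fallback} route rather than your primary one: it simply invokes \cite[Lemma~8.13]{lvdb-def}, which says that a localization of a flat abelian category is again flat. This bypasses entirely the $\Gamma_*$ computation and the uniform-saturation issue you correctly flag as the delicate point. Your primary approach --- building $\fa$ from the ample sequence and identifying $\fa(d,e)$ with $(\cD_U)_{e-d}$ --- would in principle work, but it requires a relative cohomology-and-base-change argument to ensure that the fibrewise vanishing of $H^0_{\fm}$ and $H^1_{\fm}$ upgrades to the statement $\Hom_{\QGr(\cD_U)}(\cO,\cO(k))\cong(\cD_U)_k$ over $R$; this is exactly the obstacle you anticipate, and the paper's citation avoids it completely. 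So your proposal is correct, but the paper's route is the shorter of the two options you already had in hand.
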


The intuition here is that the non-commutative scheme represented by the category $\QGr(\cD)$
aggregates its fibers, the non-commutative schemes represented by $\QGr(D_x)$, $x\in \PP(V)$, into a
flat family over  $\PP(V)$.

\subsection{}
\label{sect.1331}
The 3-dimensional Artin-Schelter regular algebras of finite
Gelfand-Kirillov dimension (or, equivalently, the noetherian ones)
have been classified. They fall into two broad classes: those on 3
generators subject to 3 quadratic relations and those on 2 generators
subject to 2 cubic relations.  Applying \cref{thm.main} to these
produces 4-dimensional Artin-Schelter regular algebras on 3 generators
subject to two quadratic and two cubic relations, and 4-dimensional
Artin-Schelter regular algebras on 2 generators subject to one cubic
and one quartic relation, respectively.

Following the terminology in \cite{LPWZ}, we sometimes say that these
two classes of 3-dimensional Artin-Schelter regular algebras have
types $(1331)$ and $(1221)$, respectively. The terminology comes from
the ranks of the free modules that appear in the minimal projective
resolution of the trivial module (see \cref{def.as}).
Extending
this terminology to the 4-dimensional case in the obvious way,
\cref{thm.main} produces 4-dimensional Artin-Schelter regular algebras
of types $(13431)$ and $(12221)$, respectively.

Once one drops the requirement that the 3-dimensional Artin-Schelter
regular algebra have finite Gelfand-Kirillov dimension (or,
equivalently, be noetherian), other ``types'' can appear. For example,
in \cite{e}, Eisenschlos constructs 3-Calabi-Yau algebras of
type $(1,n,n,1)$ for many $n \ge 3$.  \cref{thm.main} applies to
these algebras and produces 4-Calabi-Yau algebras of type $(1,n,2n-2,n,1)$.

Normal extensions of Artin-Schelter regular algebras of types $(1331)$ and $(1221)$ were also studied by Cassidy \cite{Cas99}. He obtained an analogous result to \cref{th.3-stg} for those two classes of algebras without any assumption on the matrix $Q$. The proof in \cite{Cas99} was given separately for each type, while our proof is given for all types of 3-dimensional regular algebras, which include those do not have finite Gelfand-Kirillov dimensions, simultaneously.

Most of the algebras produced by \cref{thm.main} are new. A few are not. For example, two of the four families of 4-dimensional regular algebras introduced by Lu-Palmieri-Wu-Zhang \cite{LPWZ} can be obtained by \cref{thm.main}. Thus the regularity for their algebras follows from \cref{thm.main}; in contrast, the proof of their
regularity in \cite{LPWZ} proceeded by finding an explicit basis of the defining ideal using Bergman's diamond lemma \cite{bgm}, or by observing that some of those algebras are Ore extensions of skew polynomial rings. The algebras produced by 
 \cref{thm.main} need not have PBW bases.

\subsection{Artin-Schelter regular algebras}
\label{def.as}
When $A$ is connected graded, $\Bbbk=A/A_{\ge 1}$ is a graded left
$A$-module concentrated in degree 0. We call it the {\sf trivial
  module}.

Let $d$ be a non-negative integer.  A connected graded algebra $A$ is
{\sf Gorenstein of dimension $d$} if it has left injective dimension
$d$ as a module over itself and
\begin{equation}
\label{AS-Gor-cond}
\Ext^i_A(\Bbbk,A) \cong \begin{cases} 
					\Bbbk(\ell) & \text{if $i=d$}
					\\
					0 & \text{if $i \ne d$}
				\end{cases}
\end{equation}
for some $\ell \in \ZZ$. The number $\ell$ in \cref{AS-Gor-cond} is
called the {\sf Gorenstein parameter} of $A$.

  A connected graded algebra $A$ is {\sf Artin-Schelter regular} (or
  {\sf AS-regular}, or simply {\sf regular})  of dimension $d$ if it
  \begin{enumerate}
    \renewcommand{\labelenumi}{(\arabic{enumi})}
  \item has finite global dimension $d$ and 
  \item is Gorenstein of dimension $d$.
  \end{enumerate}
  
  The Gorenstein and AS-regularity properties are known to be left-right symmetric. The projective resolution of $\Bbbk$ as a graded right $A$-module can be obtained by applying $\Hom_{A}(-,A)(-\ell)$. This also explains why the types of regular algebras are palindromic sequences; see the proof of \cite[Thm.~1.5]{AS87}. The left and right global dimensions coincide since both are equal to the projective dimension of $A$ as a bimodule (\cite[Lem.~4.3]{YZ06}; see also \cref{ssect.CY} below).
 
  Commutative Artin-Schelter regular algebras are polynomial rings; see \cite[Prop.~2.2]{Mat75}.

 The notion of regularity we use does not require or imply that the algebra be noetherian or have finite Gelfand-Kirillov dimension.
 However, by \cite[Prop.~3.1(1)]{SZ97}  it does imply that ${}_A\Bbbk$ has a finite projective resolution in which every term is a finitely generated free left $A$-module.

\subsection{Twisted Calabi-Yau algebras}
\label{ssect.CY}
Let $A$ be a graded algebra. We denote its opposite algebra by $A^\circ$  and its enveloping algebra by $A^{e}=A\otimes_{\Bbbk}A^\circ$. An $A$-bimodule $M$ is regarded as a left $A^{e}$-module via $(a\otimes b)\cdot x=axb$ for all $a,b\in A$ and $x\in M$. If $\nu$ is an automorphism of $A$ we denote by ${{}_{{}_\nu}} A_{{}_1}$ the left $A^e$-module that is $A$ as a graded vector space with action $(a \otimes b)\cdot c=\nu(a)cb$. We say that $A$ is {\sf twisted Calabi-Yau} of {\sf dimension} $d$ (abbreviated as \textsf{$d$-tCY} or simply {\sf tCY}) if $A$, as left $A^{e}$-module, has a finite-length projective resolution in which each term is finitely generated, and there is an isomorphism
\begin{equation}\label{CY-cond}
	\Ext_{A^{e}}^{i}(A,A^{e})\cong
	\begin{cases}
		{}_{\nu}A_{1}(\ell) & \text{if $i=d$}\\
		0 & \text{if $i\neq d$}
	\end{cases}
\end{equation}
of graded right $A^{e}$-modules for some $\ell\in\ZZ$ and some graded algebra automorphism $\nu$ of $A$. Such a $\nu$ is uniquely determined and is called the {\sf Nakayama automorphism} of $A$. We also denote it by $\nu_{\!{}_A}$ to clarify the algebra. (Some authors call $\nu^{-1}$ the Nakayama automorphism; see, e.g., \cite{RRZ14}.) We say that $A$ is {\sf $d$-Calabi-Yau} (or simply {\sf Calabi-Yau}, abbreviated as {\sf $d$-CY} or {\sf CY}) if it is twisted Calabi-Yau of dimension $d$ and $\nu=\id_{A}$.

\begin{proposition}
\cite[Lem.~1.2]{RRZ14}
\label{lem.RRZ}
A connected graded algebra is an Artin-Schelter regular algebra of dimension $d$ if and only if it is a twisted Calabi-Yau algebra of dimension $d$. If this is the case, the numbers $\ell$ in \cref{AS-Gor-cond} and \cref{CY-cond} are the same.
\end{proposition}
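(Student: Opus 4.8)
The plan is to prove the two implications separately, using as a bridge the complex-level isomorphism
\[
\Hom_{A}\bigl(P_{\bullet}\otimes_{A}\Bbbk,\;A\bigr)\;\cong\;\Hom_{A^{e}}(P_{\bullet},A^{e})\otimes_{A}\Bbbk ,
\]
natural in a bounded complex $P_{\bullet}$ of finitely generated projective $A^{e}$-modules. One checks it on the free bimodule $A^{e}$, where both sides reduce to $A$, and extends by naturality; the tensor on the right uses the $A$-action on $\Hom_{A^{e}}(-,A^{e})$ coming from the left-hand factor of $A^{e}$. Combined with the elementary facts that $\gldim A=\pdim_{A}\Bbbk$ for connected graded $A$ and that free bimodules are one-sided free, this display is what transfers the Gorenstein data of \cref{AS-Gor-cond} into the bimodule data of \cref{CY-cond} and back, shifts and twists included.

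First, assume $A$ is $d$-tCY and let $P_{\bullet}\to A$ be a finite resolution by finitely generated projective $A^{e}$-modules. Each $P_{i}$ is flat as a one-sided $A$-module, so applying $-\otimes_{A}\Bbbk$ to the bounded exact augmented complex yields a finite resolution $Q_{\bullet}:=P_{\bullet}\otimes_{A}\Bbbk\to\Bbbk$ of the trivial module by finitely generated free left $A$-modules; hence $\pdim_{A}\Bbbk\le d$ and $\gldim A\le d$, which is the finite-global-dimension half of AS-regularity. For the Gorenstein half I would compute $\Ext^{\bullet}_{A}(\Bbbk,A)=H^{\bullet}(\Hom_{A}(Q_{\bullet},A))$ through the bridge: its right-hand side has cohomology $\Ext^{\bullet}_{A^{e}}(A,A^{e})\otimes_{A}\Bbbk$, because the only nonzero such $\Ext$ bimodule, ${}_{\nu}A_{1}(\ell)$, is free as a right module, so no higher $\mathrm{Tor}$ contributes. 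As ${}_{\nu}A_{1}(\ell)\otimes_{A}\Bbbk\cong\Bbbk(\ell)$, we get $\Ext^{i}_{A}(\Bbbk,A)\cong\Bbbk(\ell)$ for $i=d$ and $0$ otherwise, which is exactly \cref{AS-Gor-cond}; reading off the shift shows the two parameters $\ell$ coincide.

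Conversely, assume $A$ is AS-regular of dimension $d$. The structural input I would establish first is that finite global dimension of a connected graded algebra forces homological smoothness: dualizing the minimal graded free resolution of ${}_{A}\Bbbk$ produces a finite resolution of $A$ by finitely generated free $A^{e}$-modules, of length exactly $d$, whose $i$-th term is governed by $\Ext^{i}_{A}(\Bbbk,\Bbbk)$. This supplies the finiteness demanded by \cref{CY-cond}. Feeding this bimodule resolution into the bridge and reading it backwards, condition \cref{AS-Gor-cond}—concentration of $\Ext^{\bullet}_{A}(\Bbbk,A)$ in degree $d$ with value $\Bbbk(\ell)$—forces the cohomology bimodule $U:=\Ext^{\bullet}_{A^{e}}(A,A^{e})$ to be concentrated in degree $d$ with $U\otimes_{A}\Bbbk\cong\Bbbk(\ell)$, and symmetrically on the right. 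A graded $A$-bimodule that is invertible and free of rank one on each side is necessarily ${}_{\nu}A_{1}(\ell)$ for a unique graded automorphism $\nu$; identifying $U$ as such an invertible bimodule, its invertibility coming from homological smoothness, yields \cref{CY-cond} together with the Nakayama automorphism.

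The hard part is twofold and sits at the two ends of the bridge. Setting up the displayed isomorphism demands careful bookkeeping of the two commuting $A$-actions and of the grading shift, since it is precisely this bookkeeping that carries the twist $\nu$ and the parameter $\ell$ across; this is the step at which ``twisted Calabi-Yau'' and ``Gorenstein'' are literally matched. In the AS-regular $\Rightarrow$ tCY direction there are two further genuinely non-formal points: promoting finite one-sided global dimension to a finite \emph{bimodule} projective resolution, and recognizing the top $\Ext$-bimodule $U$ as invertible of rank one so that it must take the form ${}_{\nu}A_{1}(\ell)$. Everything else is routine homological bookkeeping once these are in place.
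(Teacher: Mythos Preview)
The paper does not prove this proposition; it is simply quoted from \cite[Lem.~1.2]{RRZ14} and no argument is given. So there is nothing in the paper to compare your proposal against, and what you have written is essentially a reconstruction of the proof in that reference. Your bridge isomorphism and the tCY $\Rightarrow$ AS-regular direction are the standard argument and are fine.

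In the AS-regular $\Rightarrow$ tCY direction your outline has the right shape but two steps are misdescribed. First, ``dualizing the minimal graded free resolution of ${}_{A}\Bbbk$'' does not produce a bimodule resolution of $A$; applying $\Hom_{A}(-,A)$ to a one-sided resolution only yields another one-sided complex. What one actually does is build the minimal graded $A^{e}$-free resolution of $A$ directly: its $i^{\text{th}}$ term is $A\otimes_{\Bbbk}\Ext^{i}_{A}(\Bbbk,\Bbbk)^{*}\otimes_{\Bbbk}A$, so it mirrors the one-sided minimal resolution but is not obtained from it by any duality. Second, the sentence ``finite global dimension of a connected graded algebra forces homological smoothness'' is not correct as stated: finite global dimension only bounds the \emph{length} of the bimodule resolution, while finite generation of its terms needs $\dim_{\Bbbk}\Ext^{i}_{A}(\Bbbk,\Bbbk)<\infty$ for all $i$. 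That finiteness is exactly what \cite[Prop.~3.1(1)]{SZ97} supplies (the paper invokes it right after the definition of AS-regularity), and its proof uses the Gorenstein condition, not merely finite global dimension. Once you feed in both AS-regular hypotheses at this point, the rest of your sketch --- reading the bridge backwards via a graded Nakayama/minimality argument to see that $\Ext^{d}_{A^{e}}(A,A^{e})$ is free of rank one on each side, hence of the form ${}_{\nu}A_{1}(\ell)$ --- is correct.
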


\subsection{}
The next result, which summarizes some of the results in \S\ref{sect.norm.ext}, applies to a larger class of algebras than those covered by \Cref{thm.main}.

\begin{theorem}\label{thm.nak-ad}
  Let $D$ be a connected graded algebra generated as a $\Bbbk$-algebra by $D_1$.
  Let $\Omega\in D$ be a homogeneous normal regular element of degree $\geq 2$ and let $A = D/(\Omega)$.
 If $A$ is twisted Calabi-Yau then so is $D$ and in that case the following results hold:
\begin{enumerate}
  \item 
  on the common space of generators $A_1=D_1$,  $ \nu_{\!{}_A} = \tau^{-1} \nu_{\!{}_D}$ where $\tau$ is the unique automorphism of $D$ such that 
  $\Omega x=\tau(x)\Omega$ for all $x \in D$;
  \item 
  $\Omega$ is an eigenvector for the Nakayama automorphism $\nu_{\!{}_D}$;
  \item 
if $\sigma$ is a degree-preserving automorphism of $D$ such that $\s(\Omega)=\l \Omega$,
then $\hdet(\s)$,  the homological determinant of $\s$ acting on $D$, is  $\lambda\hdet(\sigma|_{A})$.
\end{enumerate}
\end{theorem}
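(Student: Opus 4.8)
The engine of the argument will be a single short exact sequence of graded $D$-bimodules. Since $\Omega$ is normal and regular, left multiplication $x\mapsto\Omega x$ is an injective bimodule map from the twisted free bimodule ${}_{\tau^{-1}}D_{1}(-c)$, where $c=\deg\Omega$ and $\tau$ is defined by $\Omega x=\tau(x)\Omega$, onto the two-sided ideal $(\Omega)=\Omega D=D\Omega$. The plan is to exploit the resulting sequence
\begin{equation}\label{eq.sketch.ses}
0\longrightarrow {}_{\tau^{-1}}D_{1}(-c)\longrightarrow D\longrightarrow A\longrightarrow 0
\end{equation}
(right-cancelling $\Omega$ in $\Omega\cdot\Omega=\tau(\Omega)\Omega$ shows $\tau(\Omega)=\Omega$, a fact I record for later) by feeding it into $\RHom_{D^{e}}(-,D^{e})$ and transporting the output across a change-of-rings isomorphism.

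First I would settle the biconditional and the dimension shift at the level of the trivial module. Since $\Bbbk=D/D_{\ge 1}=A/A_{\ge 1}$ and $\Omega$ is normal regular, the Cartan--Eilenberg change-of-rings spectral sequence for $D\to A$ collapses (as $\Omega$ contributes only the two groups $\Ext^{q}_{D}(A,\Bbbk)$ for $q=0,1$) to a Gysin-type long exact sequence comparing $\Ext_{D}(\Bbbk,\Bbbk)$ with $\Ext_{A}(\Bbbk,\Bbbk)$. Reading off both finiteness and the Gorenstein condition \cref{AS-Gor-cond} from this sequence shows that $D$ is Artin--Schelter regular of dimension $d$ if and only if $A$ is regular of dimension $d-1$, with $\ell_{D}=\ell_{A}+c$. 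By \Cref{lem.RRZ} this is precisely the asserted biconditional for the twisted Calabi-Yau property.

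With both algebras twisted Calabi-Yau, $\RHom_{D^{e}}(D,D^{e})={}_{\nu_{D}}D_{1}(\ell_{D})[-d]$ and $\RHom_{A^{e}}(A,A^{e})={}_{\nu_{A}}A_{1}(\ell_{A})[-(d-1)]$ are each concentrated in a single degree. For (1) I would apply $\RHom_{D^{e}}(-,D^{e})$ to \cref{eq.sketch.ses}, producing a triangle whose middle term is ${}_{\nu_{D}}D_{1}(\ell_{D})[-d]$, whose right term is a $(\tau,c)$-twist of it, and whose left term $\RHom_{D^{e}}(A,D^{e})$ I would evaluate by coinduction,
\[
\RHom_{D^{e}}(A,D^{e})\cong\RHom_{A^{e}}\!\big(A,\RHom_{D^{e}}(A^{e},D^{e})\big),
\]
using that $A^{e}$ has a length-two resolution over $D^{e}$ — the external tensor product of the left resolution $0\to D(-c)\to D\to A\to 0$ with its opposite — so that $\RHom_{D^{e}}(A^{e},D^{e})$ is an explicit twist-shift of $A^{e}$ sitting in cohomological degree $2$. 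Substituting the tCY form of $\RHom_{A^{e}}(A,A^{e})$ puts $\RHom_{D^{e}}(A,D^{e})$ in degree $d+1$, so the long exact cohomology sequence of the triangle degenerates to one short exact sequence that realizes the reduction of $\omega_{D}={}_{\nu_{D}}D_{1}$ modulo $\Omega$ as $\omega_{A}$, up to the twist $\tau$ carried by the middle term; evaluating on the common generating space $A_{1}=D_{1}$ delivers $\nu_{A}=\tau^{-1}\nu_{D}$. Part (2) then comes for free: that reduction is an honest $A$-bimodule, so the left action twisted by $\nu_{D}$ must descend to $A=D/(\Omega)$, forcing $\nu_{D}\big((\Omega)\big)\subseteq(\Omega)$; as $(\Omega)$ is cyclic and generated in degree $c$ its degree-$c$ component is $\Bbbk\Omega$ (because $D_{0}=\Bbbk$), so $\nu_{D}(\Omega)\in\Bbbk\Omega$.

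Finally, for (3), a degree-preserving $\sigma$ with $\sigma(\Omega)=\lambda\Omega$ stabilizes $(\Omega)$ and hence descends to $\sigma|_{A}$; I would compute $\hdet$ from the $\sigma$-action on the one-dimensional top local cohomology $H^{d}_{\fm}(D)$ (equivalently on $\Ext^{d}_{D}(\Bbbk,D)$). The minimal free resolution of $\Bbbk$ over $D$ is that over $A$ tensored with the length-one Koszul complex $[\,D(-c)\xrightarrow{\Omega}D\,]$, so at the top the extra Koszul factor contributes exactly the eigenvalue $\lambda$ by which $\sigma$ scales $\Omega$, and the rest contributes $\hdet(\sigma|_{A})$, giving $\hdet(\sigma)=\lambda\,\hdet(\sigma|_{A})$. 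The step I expect to be the genuine obstacle is the twist bookkeeping underlying (1): because $\Omega$ is normal but not central, the ideal $(\Omega)$ is the twisted bimodule ${}_{\tau^{-1}}D_{1}(-c)$ rather than $D(-c)$, and tracking which automorphism acts on which side through both the coinduction isomorphism and the Koszul duality of $A^{e}$ over $D^{e}$ is exactly what is needed to land the clean one-sided factor $\tau^{-1}$ — not $\tau$, its inverse, nor a two-sided combination — in $\nu_{A}=\tau^{-1}\nu_{D}$.
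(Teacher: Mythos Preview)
Your approach to the biconditional and to parts (1) and (2) is essentially the paper's: the paper also computes $\Ext^{d+2}_{D^e}(A,D^e)$ two ways — once by the bimodule short exact sequence (your triangle), once by a two-step change of rings $D^e\to D\otimes A^\circ\to A^e$ (your coinduction) — and reads off $\nu_A=\tau^{-1}\nu_D$ from the resulting bimodule isomorphism; part (2) then follows exactly as you say, from $\nu_D$ descending to $A$.

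For part (3) there is a genuine imprecision. Your claim that ``the minimal free resolution of $\Bbbk$ over $D$ is that over $A$ tensored with the length-one Koszul complex $[D(-c)\xrightarrow{\Omega}D]$'' does not make sense as stated: the $A$-resolution consists of free $A$-modules, and there is no tensor product of an $A$-complex with a $D$-complex producing a $D$-resolution (you would need to lift to free $D$-modules and form a total complex, and then the lifted differentials are not $\sigma$-equivariant in any obvious way, so it is unclear why ``the rest contributes $\hdet(\sigma|_A)$''). The paper avoids this by tracking the $\sigma$-action through two separate one-dimensional isomorphisms: a change-of-rings isomorphism $\Ext^d_A(\Bbbk,A)\cong\Ext^d_D(\Bbbk,A)$, which is shown to be $\sigma$-equivariant, and the connecting map $\Ext^d_D(\Bbbk,A)\cong\Ext^{d+1}_D(\Bbbk,D)$ coming from $0\to D\xrightarrow{\cdot\Omega}D\to A\to 0$, which picks up the factor $\lambda$ because $\sigma$ scales the middle arrow by $\lambda$. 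Your intuition that the $\lambda$ comes from the Koszul arrow is exactly right, but the clean way to isolate it is via the connecting homomorphism rather than a tensor-product-of-resolutions argument.
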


\subsection{$m$-Koszul algebras}
\label{ssect.mKoszul}
Because they are generated by their  degree-one components the 3-dimensional Artin-Schelter regular algebras $A(\sfw)$ in this paper are $m$-Koszul by \cite[Thm.~1.5]{AS87}. A connected graded algebra $A$ is {\sf $m$-Koszul} if it is finitely
presented and its relations are homogeneous of degree $m$ and $\Ext^i_A(\Bbbk,\Bbbk)$ is concentrated in degree $n(i)$ for all $i$, where $n(2j)=jm$ and $n(2j+1)=jm+1$ for integers $j$.
The class of $m$-Koszul algebras was identified and first studied by R. Berger \cite{Berger01}, the justification being that they are a natural generalization of
Koszul algebras (the $2$-Koszul algebras are exactly the Koszul algebras in the ``classical'' sense) and that there are many ``nice'' and ``natural''
$m$-Koszul algebras that are not $2$-Koszul.

Reyes-Rogalski-Zhang \cite[Cor.~5.4]{RRZ17} proved that the homological determinant of the Nakayama automorphism of a
noetherian Artin-Schelter regular connected graded algebras is 1. Mori-Smith \cite{MS16}  proved a similar result in the non-noetherian case when the algebra is
$m$-Koszul (and Artin-Schelter regular and connected) and conjectured that the equality holds without the 
$m$-Koszul assumption. It follows from \Cref{thm.nak-ad}(3) that the conjecture is true for the algebras $D(\sfw,\sfp)$
(which are never $m$-Koszul and often not noetherian).

\subsection{The structure of the paper}
In \S\ref{sect.norm.ext}, we prove some new results about the homological properties of a connected graded algebra and a normal extension of it.
The main theorem is stated in \S\ref{sect.main.thm} and some of its consequences are stated and proved there. Its proof is postponed to \S\ref{sec.proof}.
In the last section, \S\ref{sect.examples}, we determine the good tuples $\sfp$ for all the generic 3-dimensional Artin-Schelter regular algebras
of finite Gelfand-Kirillov dimension  listed in \cite[Tables (3.9) and (3.11)]{AS87}. 
This provides a host of new noetherian 4-dimensional Artin-Schelter regular algebras on 2 and 3 generators.
Some new normal and central extensions of non-noetherian 3-Calabi-Yau algebras are also presented in  \S\ref{sect.examples}.

\subsection*{Acknowledgements}

We would like to thank Izuru Mori for pointing out errors in an earlier version, Michaela Vancliff for providing information on Cassidy's work \cite{Cas99}, and the anonymous referee for a very careful reading and numerous insightful comments.  

A.C. was partially supported by NSF grants DMS-1565226 and DMS-1801011. R.K. was a JSPS Overseas Research Fellow and supported by JSPS KAKENHI Grant Numbers JP17K14164 and JP16H06337.

\section{Results on normal extensions}
\label{sect.norm.ext}

\subsection{}
\label{sect.norm.reg.elt}
In this section we are concerned about the relation between a  ring $D$ having a normal regular element $\Omega$ and 
the quotient $A:=D/(\Omega)$. 

To do this we make use of the automorphism $\tau:D \to D$ defined by the requirement that
  \begin{equation}\label{eq:tau}
    \Omega x = \tau(x)\Omega 
  \end{equation}
  for all $x \in D$. Since $\tau$ fixes $\Omega$, it descends to an automorphism of $A$ that we also denote by $\tau$.
  
Let $\sigma$ be an automorphism of a ring $D$. If $M$ is a left $D$-module we write ${}_\sigma M$ for the left $D$-module that is the
 abelian group $M$ endowed with the action 
 $$
 x \cdot m \; :=\; \sigma(x) m
 $$
 for all $x \in D$ and $m \in M$. There is an auto-equivalence $M \rightsquigarrow {}_\s M$ of the category $\Mod(D)$ of left $D$-modules.  
 The inverse $\sigma^{-1}:D \to D$ is an isomorphism ${}_\sigma D \to D$ of left $D$-modules.
Likewise, if $\sigma$ descends to $A$, then $\sigma^{-1}:A \to A$ is an isomorphism ${}_\sigma A \to A$ of left $D$-modules.
 
 We extend the notation in the previous paragraph to $D$-bimodules: if $\a$ and $\b$ are automorphisms of $D$ and $M$ is a $D$-bimodule, we write ${}_\a M_\b$ for the 
 $D$-bimodule that is $M$ as an abelian group endowed with the action
 $$
 x \cdot m \cdot y \; :=\; \a(x) m\b(y).
 $$
If $\a^{-1}\b=\c^{-1}\d$, then $\c\a^{-1}: {}_\a D_\b \to {}_\c D_\d$ is an isomorphism of $D$-bimodules. For example, ${}_\a D_1\cong {}_1D_{\a^{-1}}$. 

\subsection{}
In the situation of \cref{sect.norm.reg.elt}, the projective resolution 
\begin{equation}
\label{eq:res-of-A.1}
  \begin{tikzcd}
 0 \arrow[r]  & D  \arrow[r,"\cdot \Omega"] & D\arrow[r] & A\arrow[r] & 0
  \end{tikzcd}
\end{equation}
of $A$ as a left $D$-module is, in fact, an exact sequence
\begin{equation}
\label{eq:res-of-A.2}
  \begin{tikzcd}
 0  \arrow[r]   & {}_1D_\tau  \arrow[r,"\cdot \Omega"] & D\arrow[r] & A\arrow[r] & 0
   \end{tikzcd}
\end{equation}
 in the category of $D$-bimodules. The unadorned $D$ and $A$ in this sequence are given their natural $D$-bimodule structures.
 
 If $M$ is a left $D$-module, the result of applying $\Hom_D(-,M)$ to \cref{eq:res-of-A.2} is the exact sequence 
\begin{equation}
\label{eq:res-of-A.3}
  \begin{tikzcd}
 0  \arrow[r]   & \Hom_D(A,M)   \arrow[r]   & \Hom_D({}_1D_1,M)  \arrow[r,"\Omega \cdot "] & \Hom_D({}_1D_\tau,M) \arrow[r] & \Ext^1_D(A,M) \arrow[r] & 0.
   \end{tikzcd}
\end{equation}
 The map $\Phi:{}_\tau M \to  \Hom_D({}_1D_\tau,M)$ given by $\Phi(m)(d)=dm$ is an isomorphism of left $D$-modules when 
 $ \Hom_D({}_1D_\tau,M)$ is given the left $D$-module structure coming from the right action of $D$ on ${}_1D_\tau$.
 Thus,  $\Ext^1_D(A,M)$ is isomorphic as a left $D$-module to ${}_\tau (M/\Omega M)$.

\begin{proposition}\label{pr.enh}
  Let $D$ be a connected graded algebra, $\Omega\in D_m$ a normal element of degree $m$, and let $A = D/(\Omega)$.
Consider the following statements.
  \begin{enumerate}
  \renewcommand{\labelenumi}{(\arabic{enumi})}
\item\label{item.enh.elm} 
$\Omega$ is regular.
\item\label{item.enh.h} 
The Hilbert series of $A$ and $D$ are related by
  $h_D(t)=h_A(t)\left(1-t^m\right)^{-1}$.
\item\label{item.enh.alg} If $A$ is AS-regular of dimension $d$, then $D$ is AS-regular of dimension $d+1$.
\end{enumerate}
There are implications $\cref{item.enh.elm}\Leftrightarrow\cref{item.enh.h}\Rightarrow\cref{item.enh.alg}$
\end{proposition}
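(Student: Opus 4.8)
The plan is to establish $\cref{item.enh.elm}\Leftrightarrow\cref{item.enh.h}$ by a Hilbert-series bookkeeping argument and then to deduce $\cref{item.enh.alg}$ from $\cref{item.enh.h}$ by change-of-rings homological algebra, feeding in the computation of $\Ext^1_D(A,-)$ carried out just before the proposition.

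For $\cref{item.enh.elm}\Rightarrow\cref{item.enh.h}$ I would note that regularity of $\Omega$ makes \cref{eq:res-of-A.1} a short exact sequence of graded vector spaces, in which the leftmost $D$ must carry its internal grading shifted by $m$ since right multiplication by $\Omega\in D_m$ raises degree by $m$; taking Hilbert series of $0\to D(-m)\xrightarrow{\cdot\Omega}D\to A\to 0$ gives $h_A(t)=(1-t^m)h_D(t)$, which is $\cref{item.enh.h}$. Conversely, for $\cref{item.enh.h}\Rightarrow\cref{item.enh.elm}$ I would not presuppose regularity: writing $K$ for the graded kernel of $\cdot\Omega\colon D(-m)\to D$, the complex $0\to K\to D(-m)\xrightarrow{\cdot\Omega}D\to A\to 0$ is exact, so additivity of Hilbert series yields $h_K(t)=t^mh_D(t)-h_D(t)+h_A(t)=h_A(t)-(1-t^m)h_D(t)$. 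Under $\cref{item.enh.h}$ the right-hand side vanishes, so $h_K(t)=0$; because $D$ is connected graded and generated in degree one, each graded component is finite-dimensional, whence $K=0$ and $\Omega$ is regular.

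For $\cref{item.enh.h}\Rightarrow\cref{item.enh.alg}$ I may now assume $\Omega$ is regular and that $A$ is AS-regular of dimension $d$. The resolution $0\to D(-m)\xrightarrow{\cdot\Omega}D\to A\to 0$ shows $\pdim_D A\le 1$, so $\Ext^q_D(A,D)=0$ for $q\ge 2$; applying $\Hom_D(-,D)$ and using regularity of $\Omega$ gives $\Hom_D(A,D)=0$, while the computation preceding the proposition identifies $\Ext^1_D(A,D)\cong {}_\tau A$ up to an internal shift by $m$, a module killed by $\Omega$ and hence a left $A$-module. To transfer the Gorenstein property from $A$ to $D$ I would invoke derived change of rings along $D\to A$: since $A\otimes^L_A\Bbbk=\Bbbk$, there is a natural isomorphism $\RHom_D(\Bbbk,D)\cong\RHom_A(\Bbbk,\RHom_D(A,D))$, and because $\RHom_D(A,D)$ has cohomology only in degree $1$, this collapses to
\begin{equation*}
  \Ext^{j}_D(\Bbbk,D)\;\cong\;\Ext^{j-1}_A(\Bbbk,{}_\tau A)(m)\qquad(j\ge 0).
\end{equation*}
As a graded vector space ${}_\tau A\cong A$ since $\tau$ preserves degree, so $\Ext^{j-1}_A(\Bbbk,{}_\tau A)\cong\Ext^{j-1}_A(\Bbbk,A)$, which by the Gorenstein property of $A$ is $\Bbbk(\ell)$ when $j-1=d$ and is zero otherwise. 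Thus $\Ext^{j}_D(\Bbbk,D)$ is one-dimensional and concentrated in degree $j=d+1$, giving the Gorenstein condition \cref{AS-Gor-cond} for $D$ in dimension $d+1$. For finite global dimension I would use a Rees-type change of rings: as $\Omega$ is normal, regular, and annihilates $\Bbbk$, one has $\pdim_D\Bbbk=\pdim_A\Bbbk+1=d+1$, and $\gldim D=\pdim_D\Bbbk$ for a connected graded algebra; the nonvanishing of $\Ext^{d+1}_D(\Bbbk,D)$ then pins the left injective dimension of $D$ at $d+1$ as well, so $D$ is AS-regular of dimension $d+1$.

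The main obstacle I expect is the homological bookkeeping in the last step: justifying the derived change-of-rings isomorphism in the graded noncommutative setting, correctly carrying the automorphism twist $\tau$ and the internal degree shift through $\Ext^1_D(A,D)$, and verifying that the twist ${}_\tau A$ feeds the same $\Ext$ groups into $A$ as $A$ itself. By comparison the Hilbert-series equivalence $\cref{item.enh.elm}\Leftrightarrow\cref{item.enh.h}$ is routine.
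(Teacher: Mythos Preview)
Your proof is correct and follows essentially the same route as the paper: the Hilbert-series bookkeeping for $\cref{item.enh.elm}\Leftrightarrow\cref{item.enh.h}$ matches, and your derived change-of-rings isomorphism $\RHom_D(\Bbbk,D)\cong\RHom_A(\Bbbk,\RHom_D(A,D))$ is exactly the content of the paper's \Cref{le.hom} (proved via the factorization $\Hom_D(N,-)=\Hom_A(N,-)\circ\Hom_D(A,-)$ and Grothendieck composition) specialized to $N=\Bbbk$, $M=D$.

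One small point on the twist you were worried about: the inference ``${}_\tau A\cong A$ as graded vector spaces, so $\Ext^{j-1}_A(\Bbbk,{}_\tau A)\cong\Ext^{j-1}_A(\Bbbk,A)$'' is not a valid deduction as stated---you need an isomorphism of graded left $A$-modules, and you have one, namely $\tau^{-1}\colon{}_\tau A\to A$ (cf.\ \S\ref{sect.norm.reg.elt}). With that fix the bookkeeping goes through exactly as you outline.
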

\begin{proof}
\cref{item.enh.elm} $\Leftrightarrow$ \cref{item.enh.h} 
This follows from the fact that the complex $0 \longrightarrow D(-m) \stackrel{\cdot \Omega}{\longrightarrow} D \longrightarrow A \longrightarrow 0$
is exact if and only if $\Omega$ is regular.

\cref{item.enh.elm} $\Rightarrow$ \cref{item.enh.alg} As explained in
\cite[\S2]{ATV1}, the global dimensions of each of $A$ and $D$ equal
the projective dimension of the trivial module $\Bbbk$ over the
respective algebra. Since $\Omega$ is regular and $\pdim({}_A\Bbbk)<\infty$,
\cite[Thm.~7.3.5(i)]{mr} implies that $\pdim({}_D\Bbbk)$ equals
$\pdim({}_A\Bbbk)+1 = d+1$. Thus, $\gldim(D)=d+1$.
The Gorenstein property for $D$ is equivalent to that of $A$ via the
isomorphism
  \begin{equation*}
    \Ext^{p+1}_D(\Bbbk,D)\cong \Ext^p_A(\Bbbk,A)
  \end{equation*}
  which is a consequence of \Cref{le.hom} below applied to $N=\Bbbk$ and
  $M=D$. 
\end{proof}

\begin{lemma}\label{le.hom}
  Let $D$ be a connected graded algebra, $\Omega\in D_m$ a normal element of degree $m$, and let $A = D/(\Omega)$.
Then, for every left $A$-module $N$ and all left $D$-modules $M$ on which $\Omega$ acts regularly, there are isomorphisms
  \begin{equation*}
    \Ext^{p+1}_D(N,M)\cong \Ext^p_A(N,{}_\tau (M/\Omega M))
  \end{equation*}
  which is functorial on both $N$ and $M$, for all $p$.
\end{lemma}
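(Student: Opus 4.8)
The plan is to deduce the isomorphism from the change-of-rings (Grothendieck) spectral sequence attached to the surjection $D\twoheadrightarrow A$. Restriction of scalars $\mathrm{Res}\colon\Mod(A)\to\Mod(D)$ along $D\to A$ is exact and has right adjoint the coinduction functor $\Hom_D(A,-)\colon\Mod(D)\to\Mod(A)$, where $\Hom_D(A,M)$ carries the left $A$-module structure coming from the right $A$-action on $A$. For a left $A$-module $N$, this adjunction gives a natural isomorphism $\Hom_D(\mathrm{Res}\,N,M)\cong\Hom_A(N,\Hom_D(A,M))$, i.e.\ a factorization $\Hom_D(N,-)=\Hom_A(N,-)\circ\Hom_D(A,-)$. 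Since a right adjoint of an exact functor preserves injectives, $\Hom_D(A,-)$ sends injectives to $\Hom_A(N,-)$-acyclics, so the Grothendieck spectral sequence of the composite applies:
\begin{equation*}
E_2^{p,q}=\Ext^p_A\!\bigl(N,\Ext^q_D(A,M)\bigr)\;\Longrightarrow\;\Ext^{p+q}_D(N,M).
\end{equation*}
All of its terms and differentials are functorial in $N$ and in $M$.

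Next I would compute the coefficient modules $\Ext^q_D(A,M)$ by applying $\Hom_D(-,M)$ to the length-one bimodule resolution \cref{eq:res-of-A.2}, exactly as in the discussion preceding the lemma. Because $0\to {}_1D_\tau\xrightarrow{\cdot\Omega}D\to A\to 0$ has length $1$, one gets $\Ext^q_D(A,M)=0$ for all $q\ge 2$ automatically. The regularity hypothesis on $\Omega$ forces $\Ext^0_D(A,M)=\Hom_D(A,M)=\{m\in M:\Omega m=0\}=0$, while the isomorphism $\Phi\colon{}_\tau M\to\Hom_D({}_1D_\tau,M)$ recorded above identifies the remaining term as $\Ext^1_D(A,M)\cong{}_\tau(M/\Omega M)$ as a left $A$-module. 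Hence $\Ext^q_D(A,M)$ is concentrated in the single row $q=1$.

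Finally, with $E_2^{p,q}=0$ for every $q\ne 1$, the spectral sequence has a single non-zero row, so all differentials vanish and it collapses on the $E_2$ page. The resulting edge isomorphisms read
\begin{equation*}
\Ext^{p+1}_D(N,M)\;\cong\; E_2^{p,1}\;=\;\Ext^p_A\!\bigl(N,{}_\tau(M/\Omega M)\bigr),
\end{equation*}
which is the assertion, and functoriality in both variables is inherited from that of the spectral sequence. I expect the only genuine subtlety to be the bookkeeping with module structures: one must confirm that the $A$-action on $\Ext^1_D(A,M)$ produced by the spectral sequence is precisely the $\tau$-twisted structure ${}_\tau(M/\Omega M)$ (this is where $\tau$ enters, through $\Phi$), and that the restriction--coinduction adjunction indeed identifies the abutment with $\Ext_D(N,M)$ for $N$ restricted along $D\to A$. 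Should one wish to avoid spectral sequences, the identical conclusion follows by splicing, for a chosen $A$-projective resolution $P_\bullet\to N$, the length-one $D$-projective resolutions of the $P_i$ into a $D$-projective resolution of $N$; the bookkeeping is then the same but more laborious.
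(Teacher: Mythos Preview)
Your proof is correct and follows essentially the same approach as the paper's: both factor $\Hom_D(N,-)$ as $\Hom_A(N,-)\circ\Hom_D(A,-)$, use that the inner functor preserves injectives, and then exploit the computation $\RHom_D(A,M)\cong{}_\tau(M/\Omega M)[-1]$ coming from the length-one resolution \cref{eq:res-of-A.2}. The only cosmetic difference is that the paper phrases the composition at the level of derived categories (citing the composite-derived-functor isomorphism $\RHom_D(N,M)\cong\RHom_A(N,\RHom_D(A,M))$) rather than unpacking it as a Grothendieck spectral sequence; since your $E_2$ has a single nonzero row, the two formulations are interchangeable here.
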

\begin{proof}
  The functor $\Hom_D(N,-)$ from left $D$-modules to vector spaces
  factors as
  \begin{equation*}
  \begin{tikzpicture}[auto,baseline=(current  bounding  box.center)]
    \path[anchor=base] (0,0) node (l) {$\Mod(D)$} +(6,0) node (r) {${\sf Vect}$.} +(3,-.5) node (d) {$\Mod(A)$};
    \draw[->] (l) to[bend left=6] node[pos=.5,auto] {$\scriptstyle \Hom_D(N,-)$} (r);
    \draw[->] (l) to[bend right=6] node[pos=.5,auto,swap] {$\scriptstyle \Hom_D(A,-)$} (d);
    \draw[->] (d) to[bend right=6] node[pos=.5,auto,swap] {$\scriptstyle \Hom_A(N,-)$} (r);    
  \end{tikzpicture}
\end{equation*}
Since the lower left functor preserves injectivity, \cite[Thm.~III.7.1]{gm} gives an isomorphism
\begin{equation}\label{eq.comp.rhom}
  \mathrm{RHom}_D(N,M)\cong \mathrm{RHom}_A(N,\mathrm{RHom}_D(A,M))
\end{equation}
in the left-bounded derived category of vector spaces, which is functorial on both $M$ and $N$.

Since $\Omega$ acts faithfully on $M$, $\Hom_D(A,M)=0$. As observed above, the canonical map $\Hom_D({}_1 D_\tau,M) \to \Ext^1_D(A,M)$ induces an
isomorphism $ {}_\tau(M/\Omega M) \isoto \Ext^1_D(A,M)$ of left $D$-modules.
Since $\pdim({}_D A)=1$, $\Ext^p_D(A,M)=0$ for $p\ge 2$. Hence
\begin{equation*}
  \mathrm{RHom}_D(A,M)\cong {}_\tau(M/\Omega M)[-1]
\end{equation*}
in the left-bounded derived category of left $A$-modules. Thus, 
\begin{equation*}
  \mathrm{RHom}_D(N,M)\cong \mathrm{RHom}_A(N,{}_\tau(M/\Omega M))[-1],
\end{equation*}
which is nothing but a reformulation of the conclusion of the lemma.
\end{proof}

On occasion, one might want to impose growth conditions on connected graded algebras that complement the good homological
properties in \Cref{def.as} (the more prevalent definition of Artin-Schelter 
regularity, for instance, requires finite Gelfand-Kirillov dimension;
see, e.g., \cite[Introduction]{AS87} or \cite[(2.12)]{ATV1}). The
following remark supplements \Cref{pr.enh} in that direction.

\begin{proposition}\label{le.growth} 
  Let $D$ be a connected graded algebra and $\Omega\in D_m$ a normal element of degree $m\geq 1$, and let $A = D/(\Omega)$.
  Then
  \begin{enumerate}
  \item\label{item.growth.gk} $A$ has finite Gelfand-Kirillov dimension if and only if $D$ does;
  \item\label{item.growth.noeth} $A$ is left or right noetherian if and only if $D$ is.
  \end{enumerate}
\end{proposition}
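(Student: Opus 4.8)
The plan is to treat the two statements separately; neither requires $\Omega$ to be regular.

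For part \cref{item.growth.gk}, the surjection $D\twoheadrightarrow A$ gives $\dim_\Bbbk A_i\le\dim_\Bbbk D_i$ for all $i$, so $\operatorname{GKdim}A\le\operatorname{GKdim}D$ is immediate. For the reverse bound I would use that right multiplication by $\Omega$ gives a surjection $D_{i-m}\twoheadrightarrow(\Omega D)_i$, so the short exact sequence $0\to\Omega D\to D\to A\to 0$ yields $\dim_\Bbbk D_i\le\dim_\Bbbk A_i+\dim_\Bbbk D_{i-m}$. Iterating, $\dim_\Bbbk D_i\le\sum_{j\ge 0}\dim_\Bbbk A_{i-jm}$, and summing over $i\le n$ each $\dim_\Bbbk A_k$ is counted at most $\lfloor n/m\rfloor+1$ times, so $\sum_{i\le n}\dim_\Bbbk D_i\le(\tfrac nm+1)\sum_{i\le n}\dim_\Bbbk A_i$. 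Applying $\log_n$ and taking $\limsup_n$ gives $\operatorname{GKdim}D\le\operatorname{GKdim}A+1$. Hence one of the two dimensions is finite exactly when the other is.

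For part \cref{item.growth.noeth} one direction is formal: a quotient of a left (resp.\ right) noetherian ring is again such, so if $D$ is noetherian then so is $A=D/(\Omega)$. The substance is the converse. Here I would pass to the associated graded ring of the $(\Omega)$-adic filtration, $B:=\operatorname{gr}_\Omega D=\bigoplus_{n\ge 0}\Omega^nD/\Omega^{n+1}D$; because $\Omega^nD\subseteq D_{\ge nm}$ and $m\ge 1$, this filtration is separated, i.e.\ $\bigcap_n\Omega^nD=0$. The degree-zero part of $B$ is $A$, and writing $z$ for the image of $\Omega$ in $\Omega D/\Omega^2D$, one has $B_n=Az^n$ for all $n$, so $B$ is generated by $A$ and $z$; moreover $z\bar a=\overline{\tau(a)}\,z$ for $a\in A$, with $\tau$ the automorphism of \cref{eq:tau} descended to $A$. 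Thus $B$ is a homomorphic image of the Ore extension $A[z;\tau]$. As $A$ is left noetherian and $\tau$ is an automorphism, $A[z;\tau]$ is left noetherian, and therefore so is its quotient $B$.

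It then remains to deduce noetherianity of $D$ from that of $B$. Since a $\ZZ$-graded ring is left noetherian precisely when it has the ascending chain condition on graded left ideals, it suffices to show every graded left ideal $I\subseteq D$ is finitely generated. The $\Omega$-adic symbols of $I$ form a bihomogeneous left $B$-submodule $\operatorname{gr}_\Omega I\subseteq B$, which is finitely generated because $B$ is noetherian; I would lift bihomogeneous generators to homogeneous elements $g_1,\dots,g_r\in I$ and, given any homogeneous $a\in I$, successively subtract $B$-combinations of the $g_j$ matching its leading symbol, thereby raising its $\Omega$-adic order at each step. The main obstacle is termination: the $(\Omega)$-adic filtration is \emph{not} complete — indeed $1+\Omega$ is not invertible in $D$ — so the standard filtered-to-graded noetherianity theorem does not apply. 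Instead one exploits the grading: a nonzero homogeneous element of internal degree $d$ has $\Omega$-adic order at most $d/m$, so after finitely many reductions the remainder lies in $\Omega^nD\cap D_d=0$ once $nm>d$, expressing $a$ as a $D$-linear combination of the $g_j$. This shows $I$ is finitely generated, hence $D$ is left noetherian; the right-handed case is entirely symmetric.
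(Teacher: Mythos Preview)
Your argument for \cref{item.growth.gk} is correct and is an explicit version of the paper's, which records only the inequality $\dim_\Bbbk D_k-\dim_\Bbbk D_{k-m}\le\dim_\Bbbk A_k$ and asserts that polynomial growth of the right-hand side forces polynomial growth of $\dim_\Bbbk D_k$; your iteration and the bound $\operatorname{GKdim}D\le\operatorname{GKdim}A+1$ spell this out.

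For \cref{item.growth.noeth} the paper simply cites \cite[Lem.~8.2]{ATV1} for the converse; you instead supply a self-contained argument via the $\Omega$-adic associated graded $B$, which is the standard route and is essentially what that lemma does. Your termination step---a nonzero homogeneous element of internal degree $d$ has $\Omega$-adic order at most $d/m$, so the reduction stops---is exactly the right substitute for completeness. There is, however, a small gap: you identify $B$ as a quotient of $A[z;\tau]$ by invoking the automorphism of \cref{eq:tau}, but that formula determines $\tau$ only when $\Omega$ is \emph{regular}, which the proposition does not assume. If $\Omega$ has nonzero left annihilator, then for given $a$ the element $b$ with $\Omega a=b\Omega$ is not unique, and the ambiguity $b-b'\in\operatorname{l.ann}(\Omega)$ need not lie in $\Omega D$, so the induced map on $A$ may not be well defined. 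The repair is to avoid $\tau$ and show $B$ is left noetherian directly from normality: for a graded left ideal $J\subseteq B$ the sets $K_n=\{a\in A:z^na\in J\}$ are left ideals of $A$ (use $z^nA=Az^n$) and form an ascending chain (left-multiply by $z$), so stabilize at some $N$; then $J_n=z^nK_n=z^{n-N}J_N$ for $n\ge N$, and finitely many $A$-generators of $J_0,\dots,J_N$ generate $J$ over $B$.
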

\begin{proof}
\cref{item.growth.gk} The finite Gelfand-Kirillov dimension of $D$ certainly entails that of its quotient $A= D/(\Omega)$.  Conversely, it follows from the exact sequence
 $D(-m) \stackrel{\cdot \Omega}{\longrightarrow} D \longrightarrow A \longrightarrow 0$ 
that
  \begin{equation}\label{eq:diff}
    \mathrm{dim}(D_k)- \mathrm{dim}(D_{k-m})\le \mathrm{dim}(A_k) 
  \end{equation}
  for all $k$. Thus,  if $\mathrm{dim}(A_k)$ has polynomial growth so
  does $\mathrm{dim}(D_k)$.

\cref{item.growth.noeth}
Certainly, if $D$ is noetherian so is $A$. The converse follows from
  \cite[Lem.~8.2]{ATV1}.
\end{proof}

Later, we determine the relationship between the
Nakayama automorphism of $A$ and that of $D$ whenever the equivalent
conditions \cref{item.enh.elm} and \cref{item.enh.h} of \Cref{pr.enh} are satisfied. As a first step
in that direction, we record the following consequence of \Cref{le.hom}.

\begin{lemma}\label{le.aa-quot}
  Let $D$ be a connected graded algebra, $\Omega\in D_m$ a normal  regular element of degree $m$, and let $A = D/(\Omega)$.
There is an  isomorphism
  \begin{equation*}
    \Ext^{p+2}_{D^e}(A,D^e)  \;  \cong  \; {}_\tau\Ext^p_{A^e}(A,A^e){}_{\tau^{-1}}
  \end{equation*}
  of $D$-bimodules.
\end{lemma}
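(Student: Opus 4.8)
The plan is to realize $A^e$ as an iterated quotient of $D^e$ by two normal regular elements and to apply \cref{le.hom} twice, once for each element. Observe that $A^e = A\otimes A^\circ = D^e/(\Omega\otimes 1,\, 1\otimes\Omega)$, and that this quotient factors through the intermediate algebra $C:=A\otimes D^\circ = D^e/(\Omega\otimes 1)$. Since $\Omega$ is normal regular in $D$ (hence in $D^\circ$) and $\Bbbk$ is a field, both $\Omega\otimes 1$ and $1\otimes\Omega$ are normal regular: in $D^e$ one computes $(\Omega\otimes 1)(a\otimes b)=(\tau(a)\otimes b)(\Omega\otimes 1)$, so the associated twisting automorphism (cf.\ \cref{eq:tau}) is $\tau_L:=\tau\otimes\id$; and $(1\otimes\Omega)(a\otimes b)=(a\otimes\tau^{-1}(b))(1\otimes\Omega)$, so in $C$ the element $1\otimes\Omega$ is normal regular with twisting automorphism $\tau_R:=\id\otimes\tau^{-1}$. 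Throughout, $A$ is viewed as the regular $D$-bimodule, i.e.\ as a left $A^e$-module, hence a left $C$-module and a left $D^e$-module.

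First I would apply \cref{le.hom} with the roles $(D,\Omega,A)\rightsquigarrow(D^e,\Omega\otimes 1,C)$, taking $N=A$ and the coefficient module $M=D^e$ (on which $\Omega\otimes 1$ acts regularly). Feeding in index $p+1$ gives
\begin{equation*}
\Ext^{p+2}_{D^e}(A,D^e)\;\cong\;\Ext^{p+1}_{C}\!\bigl(A,\;{}_{\tau_L}\bigl(D^e/(\Omega\otimes 1)D^e\bigr)\bigr)\;=\;\Ext^{p+1}_{C}\bigl(A,\,{}_{\tau_L}C\bigr),
\end{equation*}
using that $D^e/(\Omega\otimes 1)D^e\cong C$ by normality. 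Crucially, the isomorphism of \cref{le.hom} is functorial in $M$, and right multiplication by elements of $D^e$ is a left $D^e$-module endomorphism of $D^e$; this is what carries the right $D^e$-action along and upgrades the displayed isomorphism to one of bimodules (the right $D^e$-action on the coefficient factors through a right $C$-action because $\Omega\otimes 1$ is normal).

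Next I would apply \cref{le.hom} again with $(D,\Omega,A)\rightsquigarrow(C,1\otimes\Omega,A^e)$, taking $N=A$ and coefficient $M={}_{\tau_L}C$. Here $1\otimes\Omega$ still acts regularly on ${}_{\tau_L}C$ because $\tau_L=\tau\otimes\id$ fixes $1\otimes\Omega$. This yields
\begin{equation*}
\Ext^{p+1}_{C}\bigl(A,\,{}_{\tau_L}C\bigr)\;\cong\;\Ext^{p}_{A^e}\Bigl(A,\;{}_{\tau_R}\bigl({}_{\tau_L}C/(1\otimes\Omega)(\cdots)\bigr)\Bigr)\;\cong\;\Ext^{p}_{A^e}\bigl(A,\,{}_{\tau_L\tau_R}A^e\bigr),
\end{equation*}
where the underlying quotient is $C/(1\otimes\Omega)=A^e$ and the two successive left twists compose to $\tau_L\tau_R=\tau\otimes\tau^{-1}$ (they commute, acting on different tensor factors). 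Combining the two displays gives $\Ext^{p+2}_{D^e}(A,D^e)\cong\Ext^{p}_{A^e}(A,\,{}_{\tau\otimes\tau^{-1}}A^e)$ as $A$-bimodules.

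It remains to convert the left $A^e$-module twist on the coefficients into the asserted $D$-bimodule twist ${}_\tau(-)_{\tau^{-1}}$, and this last bookkeeping is where I expect the real difficulty to lie. Writing $\phi=\tau\otimes\tau^{-1}$ and using the identity ${}_\alpha R_1\cong{}_1R_{\alpha^{-1}}$ recorded in \cref{sect.norm.reg.elt} (applied to $R=A^e$), one has ${}_\phi(A^e)_1\cong{}_1(A^e)_{\phi^{-1}}$; thus the left twist that \cref{le.hom} produces can be traded for a twist $\phi^{-1}=\tau^{-1}\otimes\tau$ of the \emph{surviving right} $A^e$-action, which is exactly the action inducing the $A$-bimodule structure on $\Ext^p_{A^e}(A,A^e)$. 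Translating this right $A^e$-twist through the dictionary between right $A^e$-modules and $A$-bimodules (via $(A^e)^\circ\cong A^e$ and the convention $(a\otimes b)\cdot x=axb$ from \cref{ssect.CY}) is what should deliver the stated ${}_\tau(-)_{\tau^{-1}}$. The delicate point is verifying the precise placement of $\tau$ on the left and $\tau^{-1}$ on the right (as opposed to the reverse), which amounts to tracking the two $A^\circ$- versus $A$-factors through the swap identification; I would handle this by chasing the free resolution $0\to{}_1D^e_{?}\to D^e\to C\to 0$ of \cref{eq:res-of-A.2} at each stage and confirming compatibility of all the \cref{le.hom} isomorphisms with the right-module structures via their functoriality in $M$.
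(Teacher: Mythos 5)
Your proposal is correct and follows essentially the same route as the paper: both realize $A^e$ as an iterated quotient of $D^e$ by the normal regular elements $\Omega\otimes 1$ and $1\otimes\Omega$, apply \cref{le.hom} twice (using its functoriality in $M$ to carry the right $D^e$-module structure along), and then trade the resulting left coefficient twist ${}_{\tau\otimes\tau^{-1}}A^e$ for the right twist via ${}_\alpha(-)_1\cong{}_1(-)_{\alpha^{-1}}$ to land on ${}_\tau\Ext^p_{A^e}(A,A^e)_{\tau^{-1}}$. The only (immaterial) difference is the order of the two quotients — you pass through $A\otimes D^\circ$ where the paper passes through $D\otimes A^\circ$ — and the final bookkeeping step you flag as delicate is exactly the one the paper also dispatches with the tensorand-wise isomorphisms ${}_\tau A_1\cong{}_1A_{\tau^{-1}}$.
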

\begin{proof}
We write ${}_{\tau}A^e{}_{\tau^{-1}}$ for $A^e$ endowed with the $D$-bimodule structure given by
$$
c \cdot(a \otimes b) \cdot d \;=\; \tau(\overline{c})a \otimes b\tau^{-1}(\overline{d})
$$
where $\overline{c}$ and $\overline{d}$ are the images in $A$ of $c,d \in D$ and $b\tau^{-1}(\overline{d})$
 is the product $b \times \tau^{-1}(\overline{d})$ in $A$ viewed as an element of $A^\circ$. As a left $D^e$-module,   ${}_{\tau}A^e{}_{\tau^{-1}}$ 
 is annihilated by $1 \otimes \Omega$ and $\Omega \otimes 1$ so ${}_{\tau}A^e{}_{\tau^{-1}}$ is a left 
 $A^e$-module (and, equivalently, an $A$-bimodule) where the subscripts  $\tau$ and $\tau^{-1}$ should now be viewed as the automorphisms of 
 $A$ induced by the automorphisms $\tau$ and $\tau^{-1}$ of $D$.

  Apply \Cref{le.hom} twice, first to the quotient  $D\otimes A^{\circ}=D^e/(1 \otimes \Omega)$, then to 
  $A^e=(D\otimes A^{\circ})/(\Omega \otimes 1)$. This results in an
  isomorphism
  \begin{equation*}
    \Ext^{p+2}_{D^e}(A,D^e) \; \cong \;  \Ext^p_{A^e}(A,{}_{\tau}A^e{}_{\tau^{-1}})
  \end{equation*}
  that preserves the right $D^e$-module structures. The result follows
  from the isomorphisms
  \begin{equation*}
    {}_\tau A_1\cong {}_1A_{\tau^{-1}},\quad {}_1 A_{\tau^{-1}}\cong {}_\tau A_1
  \end{equation*}
  for the left and respectively right-hand tensorands of $A^e$.
\end{proof}

\subsection{}
For the rest of this section we assume that $D$ is a connected graded algebra, $\Omega\in D_m$ is a normal regular element of degree $m$, and $A = D/(\Omega)$
is Artin-Schelter regular of dimension $d$. Since condition
\cref{item.enh.alg} of \Cref{pr.enh} holds, $D$ is Artin-Schelter  regular of dimension $d+1$.

By \Cref{lem.RRZ}, $A$ and $D$ are also twisted Calabi-Yau; it is the $A^e$- and $D^e$-bimodule structures of $A$ and $D$ that will play the central role.

We denote by $\nu=\nu_{\!{}_D}$ the Nakayama automorphism of $D$, defined by
\begin{equation}\label{eq:ddd}
  \Ext^{d+1}_{D^e}(D,D^e)\cong {}_\nu D_1 \cong {}_1D_{\nu^{-1}};
\end{equation}
all other $\Ext^p(D,D^e)$ vanish.

\subsection{}
The next result, together with \Cref{le.aa-quot}, ensures that
$\Ext_{D^{e}}^{d+2}(A,D^e)$ can function as a bridge between
$\Ext_{D^{e}}^{d+1}(D,D^e)$ and $\Ext_{A^{e}}^d(A,A^e)$, which in turn are used to
define the Nakayama automorphisms of $A$ and $D$.

\begin{lemma}\label{le.ddd-add}
  Let $D$ be a connected graded algebra, $\Omega\in D_m$ a normal regular element of degree $m$, and let $A = D/(\Omega)$.
If $A$ is AS-regular of dimension $d$, then there is an isomorphism
  \begin{equation*}
    \Ext_{D^e}^{d+2}(A,D^e) \; \cong \; {}_{\tau}A_{\nu_{\!{}_D}^{-1}} 
  \end{equation*}
  of $D$-bimodules.
\end{lemma}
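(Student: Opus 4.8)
The plan is to dualize the bimodule presentation \cref{eq:res-of-A.2} of $A$ against $D^e$, exploiting that $D$ is twisted Calabi--Yau of dimension $d+1$ (which holds by \cref{pr.enh}\cref{item.enh.alg} and \cref{lem.RRZ}). Recall from \cref{eq:ddd} that $\Ext^{d+1}_{D^e}(D,D^e)\cong{}_1D_{\nu^{-1}}$ with $\nu=\nu_{\!{}_D}$, all other $\Ext^p_{D^e}(D,D^e)$ vanishing. The first ingredient I would record is how this Nakayama bimodule changes when the source is twisted. Since ${}_1D_\tau$ is the restriction of the diagonal bimodule $D$ along the algebra automorphism $1\otimes\tau$ of $D^e$, pullback along $1\otimes\tau$ is exact and carries a finitely generated projective bimodule resolution of $D$ to one of ${}_1D_\tau$. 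Hence $\Ext^p_{D^e}({}_1D_\tau,D^e)\cong\Ext^p_{D^e}(D,(1\otimes\tau^{-1})^{*}D^e)$, and the left-module isomorphism $(1\otimes\tau^{-1})^{*}D^e\cong D^e$ induced by $1\otimes\tau$ transports the inner $D^e$-structure (the one producing the bimodule structure on $\Ext$) by $\tau$ on the right. The outcome is that $\Ext^p_{D^e}({}_1D_\tau,D^e)$ is again concentrated in degree $d+1$, where it is $\cong{}_1D_{\tau^{-1}\nu^{-1}}$ (equivalently ${}_{\nu\tau}D_1$).

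With these two computations, I would apply $\Ext^{\bullet}_{D^e}(-,D^e)$ to the short exact sequence $0\to{}_1D_\tau\xrightarrow{\cdot\Omega}D\to A\to 0$ of \cref{eq:res-of-A.2}. Because the two middle terms have their $\Ext^{\bullet}_{D^e}(-,D^e)$ concentrated in degree $d+1$, the long exact sequence forces $\Ext^p_{D^e}(A,D^e)=0$ for $p\notin\{d+1,d+2\}$ and yields
\begin{equation*}
\Ext^{d+1}_{D^e}(D,D^e)\xrightarrow{(\cdot\Omega)^{*}}\Ext^{d+1}_{D^e}({}_1D_\tau,D^e)\longrightarrow\Ext^{d+2}_{D^e}(A,D^e)\longrightarrow 0,
\end{equation*}
together with $\Ext^{d+1}_{D^e}(A,D^e)=\ker\big((\cdot\Omega)^{*}\big)$. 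Thus everything reduces to identifying the map $(\cdot\Omega)^{*}\colon{}_1D_{\nu^{-1}}\to{}_1D_{\tau^{-1}\nu^{-1}}$ and computing its cokernel.

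The crux is the identification of this map. The key observation is that right multiplication $z\mapsto z\Omega$ is a well-defined $D$-bimodule homomorphism ${}_1D_{\nu^{-1}}\to{}_1D_{\tau^{-1}\nu^{-1}}$: right linearity amounts to exactly the normality identity $q\Omega=\Omega\tau^{-1}(q)$ read off from \cref{eq:tau}, so that, pleasantly, no commutation between $\nu$ and $\tau$ is required. Tracking $\cdot\Omega$ through the pullback isomorphisms of the first paragraph shows that $(\cdot\Omega)^{*}$ is precisely this right multiplication; verifying this naturality of the duality against the multiplication map is the step demanding the most care, and is where I expect the main obstacle to lie. Granting it, $(\cdot\Omega)^{*}$ is injective (whence also $\Ext^{d+1}_{D^e}(A,D^e)=0$, consistent with the vanishing range above) with image the two-sided ideal $\Omega D=(\Omega)$, so its cokernel is $D/(\Omega)=A$ carrying the bimodule structure inherited from ${}_1D_{\tau^{-1}\nu^{-1}}$, namely ${}_1A_{\tau^{-1}\nu^{-1}}$. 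Finally the isomorphism rule $\c\a^{-1}\colon{}_\a D_\b\isoto{}_\c D_\d$ (valid whenever $\a^{-1}\b=\c^{-1}\d$), applied to the automorphisms $\tau$ and $\nu^{-1}$ descended to $A$, rewrites ${}_1A_{\tau^{-1}\nu^{-1}}\cong{}_\tau A_{\nu^{-1}}$, which is the assertion. The degree shifts, which I have suppressed throughout, are routine to track and match $\cdot\Omega$ having degree $m$; I would note that this computation produces the answer purely in terms of $\nu_{\!{}_D}$, complementing \cref{le.aa-quot} where the same group was expressed through $\Ext^d_{A^e}(A,A^e)$.
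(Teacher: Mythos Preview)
Your proposal is correct and follows essentially the same route as the paper: apply $\Ext^{\bullet}_{D^e}(-,D^e)$ to the bimodule short exact sequence \cref{eq:res-of-A.2}, use that $\Ext^{\bullet}_{D^e}(D,D^e)$ and $\Ext^{\bullet}_{D^e}({}_1D_\tau,D^e)$ are concentrated in degree $d+1$, and identify the induced map $(\cdot\Omega)^*$ concretely to read off the cokernel. The only cosmetic difference is that the paper tracks the map as \emph{left} multiplication $\Omega\cdot\colon {}_1D_{\nu^{-1}}\to{}_\tau D_{\nu^{-1}}$ (via an explicit chain of commutative squares), whereas you package it as \emph{right} multiplication $\cdot\,\Omega\colon {}_1D_{\nu^{-1}}\to{}_1D_{\tau^{-1}\nu^{-1}}$ after first identifying $\Ext^{d+1}_{D^e}({}_1D_\tau,D^e)$ by restriction along $1\otimes\tau$; these are equivalent via the isomorphism $\tau$, and your final rewriting ${}_1A_{\tau^{-1}\nu^{-1}}\cong{}_\tau A_{\nu^{-1}}$ uses only that $\tau$ (not $\nu$) descends to $A$, so there is no circularity with \cref{cor.eigen}.
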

\begin{proof}
	The short exact sequence \cref{eq:res-of-A.2} of $D$-bimodules yields an exact sequence
	\begin{equation*}
		\begin{tikzcd}
			\Ext_{D^e}^{d+1}(D,D^e)\arrow[r,"(\cdot\Omega)^{*}"] & \Ext_{D^e}^{d+1}({}_1D_\tau,D^e)\arrow[r] &
			\Ext_{D^e}^{d+2}(A,D^e)\arrow[r] & 0
		\end{tikzcd}
	\end{equation*}
	of right $D^e$-modules in which
	$(\cdot\Omega)^{*}$ denotes the morphism induced from the morphism $(\cdot\Omega)$ between the first variables. A morphism induced from that between second variables will be denoted by a lower star. We will show that the diagram 
	\begin{equation}\label{eq.le-ddd-add-1}
		\begin{tikzcd}
			\Ext_{D^{e}}^{d+1}(D,D^{e})
				\ar[d,"(\cdot\Omega)^{*}"]
				\ar[r,equal] &
			\Ext_{D^{e}}^{d+1}(D,D^{e})
				\ar[d,"((1\otimes\Omega)\cdot)_{*}"]
				\ar[r,equal] &
			\Ext_{D^{e}}^{d+1}(D,D^{e})
				\ar[d,"(\cdot(1\otimes\Omega))_{*}"]
				\ar[r,equal] &
			\Ext_{D^{e}}^{d+1}(D,D^{e})
				\ar[d,"\Omega\cdot"] \\
			\Ext_{D^{e}}^{d+1}({}_{1}D_{\tau^{}},D^{e})
				\ar[r,equal] &
			\Ext_{D^{e}}^{d+1}(D,{}_{1\otimes\tau^{-1}}D^{e})
				\ar[r,"\sim","(1\otimes\tau)_{*}"'] &
			\Ext_{D^{e}}^{d+1}(D,D^{e}_{1\otimes\tau})
				\ar[r,equal] &
			{}_{\tau}\Ext_{D^{e}}^{d+1}(D,D^{e})_{1}
		\end{tikzcd}
	\end{equation}
	commutes.
	The commutativity of the middle square follows from the definition of $\tau$. Note that right multiplication by $1\otimes\Omega$ on $D^{e}=D\otimes_{k}D^{\circ}$ is left multiplication by $\Omega$ on the right tensorand. The commutativity of the right-most square is straightforward.
	
	For the left-most square: The left vertical morphism is equal to $((1\otimes\Omega)\cdot)^{*}$. For each left $D^{e}$-module $M$, set $FM:={}_{1}M_{\tau^{-1}}$ and $\varphi_{M}:=((1\otimes\Omega)\cdot)\colon M\to FM$. This defines a morphism $\varphi\colon 1\to F$ of auto-equivalences on $\Mod(D^{e})$. Since every left $D^{e}$-homomorphism $f\colon FM\to N$ satisfies $F(f)F(\varphi_{M})=\varphi_{N}f$, we obtain the following commutative diagram by considering an injective resolution of $N$:
	\begin{equation*}
		\begin{tikzcd}
			\Ext_{D^{e}}^{d+1}(FM,N)
				\ar[d,"(\varphi_{M})^{*}"]
				\ar[r,equal] &
			\Ext_{D^{e}}^{d+1}(FM,N)
				\ar[d,"(\varphi_{N})_{*}"] \\
			\Ext_{D^{e}}^{d+1}(M,N)
				\ar[r,equal,""] &
			\Ext_{D^{e}}^{d+1}(FM,FN)\rlap{.}
		\end{tikzcd}
	\end{equation*}
	This becomes the left-most square of \cref{eq.le-ddd-add-1} after substituting $M:={}_{1}D_{\tau}$ and $N:=D^{e}$.
	
	In the following diagram, the right-hand square  obviously commutes and the left-hand square commutes since the horizontal arrows are isomorphism of $D$-bimodules:
	\begin{equation*}
		\begin{tikzcd}
			\Ext_{D^{e}}^{d+1}(D,D^{e})
				\ar[d,"\Omega\cdot"]
				\ar[r,"\sim"'] &
			{}_{\nu}D_{1}
				\ar[d,"\Omega\cdot"]
				\ar[r,"\nu^{-1}","\sim"'] &
			{}_{1}D_{\nu^{-1}}
				\ar[d,"\Omega\cdot"] \\
			{}_{\tau}\Ext_{D^{e}}^{d+1}(D,D^{e})_{1}
				\ar[r,"\sim"] &
			{}_{\tau}({}_{\nu}D_{1})_{1}
				\ar[r,"\nu^{-1}"',"\sim"] &
			{}_{\tau}D_{\nu^{-1}}
		\end{tikzcd}
	\end{equation*}
	where the middle vertical arrow is left multiplication by $\Omega$ through the twist by $\nu$, so as a linear map $D\to D$, it is left multiplication by $\nu(\Omega)$. By adjoining this diagram to the right-hand end of \cref{eq.le-ddd-add-1}, we obtain the commutative diagram
	\begin{equation*}
		\begin{tikzcd}
			\Ext_{D^e}^{d+1}(D,D^e)
				\ar[d,"\wr"]
				\ar[r,"(\cdot\Omega)^{*}"] &
			\Ext_{D^e}^{d+1}({}_1D_\tau,D^e)
				\ar[d,"\wr"]
				\ar[r] &
			\Ext_{D^e}^{d+2}(A,D^e)
				\ar[d,"\wr"]
				\ar[r] &
			0
			\\
			{}_{1}D_{\nu^{-1}}
				\ar[r,"\Omega\cdot"] &
			{}_{\tau}D_{\nu^{-1}}
				\ar[r] &
			{}_{\tau}A_{\nu_{\!{}_D}^{-1}}
				\ar[r] &
			0
		\end{tikzcd}
	\end{equation*}
	which completes the proof of the lemma.
\end{proof}

We now determine the relation between the Nakayama automorphisms of $D$ and $A$ when $D$ is generated by $D_1$ as a $\Bbbk$-algebra. 
The algebras $D$ that we will consider in \cref{sect.main.thm}  {\it are} generated by $D_1$.

\begin{theorem}\label{pr.nak-ad}
  Let $D$ be a connected graded $\Bbbk$-algebra that is generated by $D_1$.
  Let $\Omega\in D$ be a homogeneous normal regular element of degree $\geq 2$ and let $A = D/(\Omega)$.
If $A$ is twisted Calabi-Yau so is $D$ and in that case $\nu_{\!{}_A} = \tau^{-1}\nu_{\!{}_D}$
on the common space of generators $A_1=D_1$.
\end{theorem}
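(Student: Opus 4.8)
The plan is to compute $\Ext^{d+2}_{D^e}(A,D^e)$ in two independent ways and compare the resulting twisted bimodules: the two computations are exactly \Cref{le.aa-quot} and \Cref{le.ddd-add}, so the real work is assembling them against the definitions of the two Nakayama automorphisms and then reading off the relation.

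First I would settle the existence assertion. By \Cref{lem.RRZ}, $A$ being twisted Calabi--Yau of some dimension $d$ is the same as $A$ being AS-regular of dimension $d$; since $\Omega$ is normal and regular, \Cref{pr.enh}\cref{item.enh.alg} then makes $D$ AS-regular of dimension $d+1$, and a second application of \Cref{lem.RRZ} shows $D$ is twisted Calabi--Yau. The hypotheses that $D$ is generated by $D_1$ and that $\deg\Omega\geq 2$ enter here and in the final comparison: they guarantee $A_1=D_1$ and that $A$ and $D$ are connected graded and generated in degree one, so that a graded algebra automorphism is pinned down by its action on this common degree-one space.

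Next I would assemble the two descriptions of $\Ext^{d+2}_{D^e}(A,D^e)$. Taking $p=d$ in \Cref{le.aa-quot} and inserting the defining isomorphism $\Ext^d_{A^e}(A,A^e)\cong {}_{\nu_{\!{}_A}}A_1$ yields $\Ext^{d+2}_{D^e}(A,D^e)\cong {}_\tau\bigl({}_{\nu_{\!{}_A}}A_1\bigr)_{\tau^{-1}}={}_{\nu_{\!{}_A}\tau}A_{\tau^{-1}}$, using that nesting twists composes as ${}_\alpha({}_\beta M_\gamma)_\delta={}_{\beta\alpha}M_{\gamma\delta}$. On the other hand \Cref{le.ddd-add} gives $\Ext^{d+2}_{D^e}(A,D^e)\cong {}_\tau A_{\nu_{\!{}_D}^{-1}}$. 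Comparing the two produces an isomorphism of graded $A$-bimodules ${}_{\nu_{\!{}_A}\tau}A_{\tau^{-1}}\cong {}_\tau A_{\nu_{\!{}_D}^{-1}}$.

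Finally I would extract the automorphism identity. Using the reduction ${}_\alpha A_\beta\cong {}_1A_{\alpha^{-1}\beta}$ recorded in \S\ref{sect.norm.reg.elt}, the two sides become ${}_1A_{\tau^{-1}\nu_{\!{}_A}^{-1}\tau^{-1}}$ and ${}_1A_{\tau^{-1}\nu_{\!{}_D}^{-1}}$. The one point needing an argument is the \emph{converse} of that reduction: for a connected graded algebra, a graded bimodule isomorphism ${}_1A_\phi\cong {}_1A_\psi$ forces $\phi=\psi$. Indeed such an isomorphism is in particular a left $A$-module map $A\to A$, hence right multiplication by its value on $1$, which gradedness together with $A_0=\Bbbk$ forces to be a scalar; intertwining the two right actions then gives $\phi=\psi$, the degree shifts on the two sides necessarily agreeing for the map to exist. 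Applying this with $\phi=\tau^{-1}\nu_{\!{}_A}^{-1}\tau^{-1}$ and $\psi=\tau^{-1}\nu_{\!{}_D}^{-1}$ gives $\nu_{\!{}_A}^{-1}\tau^{-1}=\nu_{\!{}_D}^{-1}$, equivalently $\nu_{\!{}_A}=\tau^{-1}\nu_{\!{}_D}$, which is the asserted relation on $A_1=D_1$. I expect the main obstacle to be purely organizational: composing the left and right twists in the correct order and keeping the induced automorphisms on $A$ distinct from the original ones on $D$; once \Cref{le.aa-quot} and \Cref{le.ddd-add} are in hand, the identity is forced with no further homological input.
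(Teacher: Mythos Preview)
Your proposal is correct and follows essentially the same route as the paper: invoke \Cref{pr.enh} via \Cref{lem.RRZ} for the existence, then compare the two descriptions of $\Ext^{d+2}_{D^e}(A,D^e)$ coming from \Cref{le.aa-quot} and \Cref{le.ddd-add}. The only cosmetic difference is that the paper twists both sides by ${}_{\tau^{-1}}(-)_{\tau}$ \emph{before} reducing, obtaining ${}_1A_{\nu_{\!{}_A}^{-1}}\cong{}_1A_{\nu_{\!{}_D}^{-1}\tau}$ directly, whereas you reduce first and then cancel a common $\tau^{-1}$; your explicit justification that ${}_1A_\phi\cong{}_1A_\psi$ forces $\phi=\psi$ in the connected graded setting is a detail the paper leaves implicit.
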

\begin{proof}
  It follows from \cref{item.enh.alg} of \Cref{pr.enh} that $D$ is twisted Calabi-Yau of
  dimension $d+1$.
  \Cref{le.aa-quot,le.ddd-add} together with $\Ext^d_{A^e}(A,A^e)\cong {}_{\nu_{\!{}_A}}A_1$ imply that
 \begin{equation*}
    {}_1A_{\nu_{\!{}_A}^{-1}} \; \cong \; \Ext^d_{A^e}(A,A^e) \; \cong \; {}_{\tau^{-1}}\Ext^{d+2}_{D^e}(A,D^e)_\tau  \; \cong \; {}_1 A_{\nu_{\!{}_D}^{-1}\tau};
  \end{equation*}
  the conclusions follow.
\end{proof}

We remind the reader that $\tau$ and $\nu_{\!{}_D}$ commute since, by \cite[Thm.~4.2]{lmz-nak}, Nakayama automorphisms
  are central under conditions that our algebras certainly satisfy.

\Cref{pr.nak-ad} is analogous to \cite[Lem.~1.5]{RRZ14}, but we do
not use the theory of dualizing complexes, and do not assume here that
$\Omega$ is an eigenvector for the Nakayama automorphism $\nu_{\!{}_D}$. In
fact, the latter condition now follows from the above discussion.

\begin{corollary}\label{cor.eigen}
  Let $D$ be a connected graded algebra, $\Omega\in D$ a homogeneous normal regular element of degree $\geq 2$. 
If $D/(\Omega)$ is twisted Calabi-Yau, then $\Omega$ is an eigenvector for the Nakayama automorphism $\nu_{\!{}_D}$.
\end{corollary}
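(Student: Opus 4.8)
The plan is to exploit the two different descriptions of $\Ext^{d+2}_{D^e}(A,D^e)$ furnished by \Cref{le.aa-quot,le.ddd-add}, and in particular the fact that one of them makes this group an $A$-bimodule while the other writes it as a twist of $D$. First I would check that $\nu_{\!{}_D}$ is even defined: since $A=D/(\Omega)$ is twisted Calabi-Yau it is AS-regular of some dimension $d$ by \Cref{lem.RRZ}, so \cref{item.enh.alg} of \Cref{pr.enh} makes $D$ AS-regular of dimension $d+1$ and hence twisted Calabi-Yau. Crucially, none of the ingredients I intend to use ($\Cref{pr.enh}$, $\Cref{le.aa-quot}$, $\Cref{le.ddd-add}$) requires $D$ to be generated in degree $1$, so they apply in the full generality of the corollary; in particular I deliberately avoid invoking \Cref{pr.nak-ad}.

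Next I would compare the two identifications of $M:=\Ext^{d+2}_{D^e}(A,D^e)$. On one hand, \Cref{le.aa-quot} gives $M\cong {}_\tau\Ext^d_{A^e}(A,A^e)_{\tau^{-1}}$, which is visibly an $A$-bimodule; in particular the operator $1\otimes\Omega$ (right multiplication by $\Omega$) annihilates $M$. On the other hand, \Cref{le.ddd-add} identifies $M\cong {}_\tau A_{\nu_{\!{}_D}^{-1}}$ as $D$-bimodules, and on this model right multiplication by $\Omega$ is the map $a\mapsto a\,\overline{\nu_{\!{}_D}^{-1}(\Omega)}$, where $\overline{(-)}$ denotes the image in $A$. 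Since the isomorphism is one of $D$-bimodules, the two operators must agree, so $a\,\overline{\nu_{\!{}_D}^{-1}(\Omega)}=0$ for every $a\in A$; taking $a=\bar 1$ forces $\overline{\nu_{\!{}_D}^{-1}(\Omega)}=0$, that is, $\nu_{\!{}_D}^{-1}(\Omega)\in(\Omega)$.

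Finally I would run a degree count. Because $D$ is connected and $\Omega$ is homogeneous of degree $m$, normality gives $(\Omega)=D\Omega$ and its degree-$m$ component is exactly $D_0\Omega=\Bbbk\Omega$. As $\nu_{\!{}_D}^{-1}$ preserves degree, $\nu_{\!{}_D}^{-1}(\Omega)$ lies in this component, whence $\nu_{\!{}_D}^{-1}(\Omega)=c\Omega$ for some $c\in\Bbbk$; applying $\nu_{\!{}_D}$ and noting that $c\neq 0$ (as $\nu_{\!{}_D}^{-1}$ is injective and $\Omega\neq 0$) shows $\nu_{\!{}_D}(\Omega)=c^{-1}\Omega$, so $\Omega$ is an eigenvector. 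The delicate point I expect is the bookkeeping in the middle step: one must verify that right multiplication by $\Omega$ really becomes right multiplication by $\overline{\nu_{\!{}_D}^{-1}(\Omega)}$ under the right-hand twist by $\nu_{\!{}_D}^{-1}$, and that the $D$-bimodule isomorphism of \Cref{le.ddd-add} is compatible with the $A^e$-action coming from \Cref{le.aa-quot}. Both compatibilities are really part of how those lemmas were set up, so I anticipate no genuine difficulty beyond careful tracking of the $\tau$- and $\nu_{\!{}_D}$-twists.
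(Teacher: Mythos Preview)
Your proof is correct and essentially the same as the paper's: the paper deduces that $\nu_{\!{}_D}$ descends to $D/(\Omega)$ by citing \Cref{pr.nak-ad}, whose proof is precisely the combination of \Cref{le.aa-quot} and \Cref{le.ddd-add} that you invoke directly, and then finishes with the identical degree argument. Your observation that bypassing \Cref{pr.nak-ad} avoids its generation-in-degree-$1$ hypothesis is valid, though in the paper this is a standing assumption anyway.
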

\begin{proof}
By  \Cref{pr.nak-ad},  $\nu_{\!{}_D}$ descends to an automorphism of  $D/(\Omega)$. Hence $\nu_{\!{}_D}(\Omega)$ belongs to the ideal
  $(\Omega)$. Since $D$ is connected and $\nu_{\!{}_D}$ preserves degree, $\nu_{\!{}_D}(\Omega) \in \Bbbk \Omega$.
\end{proof}

\subsection{Homological determinant}
As before, let $D$ be a connected graded algebra, $\Omega\in D_m$ a normal regular element of degree $m$, and assume that $A = D/(\Omega)$ is 
Artin-Schelter regular of dimension $d$ with Gorenstein parameter $\ell$. 
We will determine the relation between the homological determinant of an automorphism of $D$ that preserves the ideal $(\Omega)$ with the 
homological determinant of the induced automorphism of $A$. 
The main result in this section, \cref{th.hdets}, will be used later to show that the homological determinant of the Nakayama automorphism
of the algebra $D(\sfw,\sfp)$ is 1.

The homological determinant was introduced by J{\o}rgensen-Zhang \cite{jz}. 
Before defining it we need some notation and other ideas.

Let $M$ be a left $A$-module.
The $p^{\rm th}$ {\sf local cohomology group} of $M$ is $H_{\fm}^{p}(M):=\varinjlim\Ext_{A}^{p}(A/A_{\geq k},M)$. 
The right action of $A$ on $A/A_{\geq k}$ determines a left $A$-module structure on 
$H_{\fm}^{p}(M)$, and $H_{\fm}^{p}$ then becomes an endofunctor on the category of graded left $A$-modules.

The {\sf Matlis dual} of $M$ is the graded right $A$-module  $M'$ whose degree-$i$ component is $\Hom_{\Bbbk}(M_{-i},\Bbbk)$, 
the right action  of $A$ coming from the left action of $A$ on $M$. 
The operation $M \rightsquigarrow M'$ is a duality between the category of graded left $A$-modules and the category of graded  right $A$-modules.

Let $\Aut_{\sf gr}(A)$ denote the group of degree-preserving $\Bbbk$-algebra automorphisms of $A$ and let $\sigma\in\Aut_{\sf gr}(A)$. 
Then $\s:A \to  {}_\s A_{\s}$ is an isomorphism of $A$-bimodules. 

J{\o}rgensen-Zhang \cite{jz} defined the homological determinant in the following way. First they show that the top local cohomology group 
$H_{\fm}^{d}(A)$ is isomorphic to $A'(\ell)$ and all other $H_{\fm}^{p}(A)$ vanish. Fix an isomorphism $\psi\colon H_{\fm}^{d}(A)\to A'(\ell)$. 
Let $H_{\fm}^{d}({}_\sigma A) \isoto {}_\sigma H_{\fm}^{d}(A)$ be the isomorphism 
\begin{equation*}
	H_{\fm}^{d}({}_\sigma A)\, =\, \varinjlim\Ext_{A}^{d}(A/A_{\geq k},{}_{\sigma}A)\, \isoto\, \varinjlim\Ext_{A}^{d}({}_{\sigma^{-1}}(A/A_{\geq k}),A)
	\, \isoto\, \varinjlim\Ext_{A}^{d}((A/A_{\geq k})_{\sigma},A)\,=\, {}_\sigma H_{\fm}^{d}(A).
\end{equation*}
The {\sf homological determinant} of $\s$ is the unique scalar $\hdet(\sigma)$ that makes the diagram 
\begin{equation}\label{eq.hdet}
	\begin{tikzcd}[column sep=large]
		H_{\fm}^{d}(A) \ar[r,"{H_{\fm}^{d}(\sigma)}","\sim"']\ar[d,"\psi"',"\wr"] & H_{\fm}^{d}({}_\sigma A)\ar[r,"\sim"'] & {}_\sigma H_{\fm}^{d}(A)\ar[d,"\psi","\wr"']\\
		A'(\ell)\ar[rr,"(\hdet(\sigma))^{-1}(\sigma^{-1})'"',"\sim"] & & {}_{\s}A'(\ell)
	\end{tikzcd}
\end{equation}
commute. When we need to identify the algebra $A$ we write $\hdet_{A}(\sigma)$.

The degree-$(-\ell)$ components of the diagram \cref{eq.hdet} give another commutative diagram:

\begin{equation}\label{eq.hdet2}
	\begin{tikzcd}[column sep=large]
		\Ext_{A}^{d}(\Bbbk,A)\ar[r,"\sigma_{*}","\sim"']\ar[d,"\wr"'] & \Ext_{A}^{d}(\Bbbk,{}_\sigma A)\ar[r,"{}_{\sigma^{-1}}(-)","\sim"'] & \Ext_{A}^{d}(\Bbbk,A)\ar[d,"\wr"]\\
		\Bbbk(\ell)\ar[rr,"(\hdet(\sigma))^{-1}"',"\sim"] & & \Bbbk(\ell)
	\end{tikzcd}
\end{equation}
where the two vertical morphisms are the same. 
Note that ${}_\sigma\Ext_{A}^{d}(\Bbbk,A)=\Ext_{A}^{d}(\Bbbk,A)$.
Because all the objects in \cref{eq.hdet2} have dimension 1,
the definition of $\hdet(\sigma)$ does not depend on the choice of the isomorphism $\psi$.

\begin{lemma}\label{lem.changebasering}
Let $\sigma \in \Aut_{\sf gr}(D)$.
If $\Omega$ is an eigenvector for $\sigma$, then
there is an isomorphism $\alpha\colon\Ext_{D}^{d}(\Bbbk,A)\isoto\Ext_{A}^{d}(\Bbbk,A)$ that makes the diagram 
	\begin{equation}\label{eq.changebasering}
		\begin{tikzcd}[column sep=large]
			\Ext_{D}^{d}(\Bbbk,A)\ar[r,"\sigma_{*}","\sim"']\ar[d,"\alpha"',"\wr"] & \Ext_{D}^{d}(\Bbbk,{}_\sigma A)\ar[r,"{}_{\sigma^{-1}}(-)","\sim"'] & \Ext_{D}^{d}(\Bbbk,A)\ar[d,"\alpha","\wr"'] \\
			\Ext_{A}^{d}(\Bbbk,A)\ar[r,"\sigma_{*}"',"\sim"] & \Ext_{A}^{d}(\Bbbk,{}_\sigma A)\ar[r,"{}_{\sigma^{-1}}(-)"',"\sim"] & \Ext_{A}^{d}(\Bbbk,A)
		\end{tikzcd}
	\end{equation}
commutative.
\end{lemma}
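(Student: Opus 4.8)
The plan is to build $\alpha$ as a derived change-of-rings isomorphism and then to deduce commutativity of \cref{eq.changebasering} from the naturality of every map involved. First I would pin down the only place the hypothesis is used: since $\sigma$ is degree-preserving and $(\Omega)_m=\Bbbk\Omega$, the eigenvector condition $\sigma(\Omega)\in\Bbbk\Omega$ is equivalent to $\sigma$ descending to a graded automorphism of $A$, and it forces $\sigma^{-1}(\Omega)\in\Bbbk\Omega$ as well. In particular $\Omega$ acts as $0$ on each of the $D$-modules $A$, ${}_\sigma A$ and ${}_{\sigma^{-1}}A$, and twisting by $\sigma^{\pm 1}$ carries the kernel of $\Omega$ to itself.

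Next I would invoke the isomorphism \cref{eq.comp.rhom} from the proof of \Cref{le.hom}, namely $\RHom_D(\Bbbk,M)\cong\RHom_A(\Bbbk,\RHom_D(A,M))$, which is natural in the $D$-module $M$ and, crucially, does \emph{not} require $\Omega$ to act regularly on $M$. Using the two-term bimodule resolution \cref{eq:res-of-A.2} to compute $\RHom_D(A,M)$ for a $D$-module $M$ killed by $\Omega$, one finds cohomology only in degrees $0$ and $1$, with $H^0=\Hom_D(A,M)=\ker(\Omega\mid M)=M$ and $H^1\cong{}_\tau M$. Taking $M=A$ and $M={}_\sigma A$ and feeding this into \cref{eq.comp.rhom}, the truncation triangle $H^0\to\RHom_D(A,M)\to H^1[-1]\to$ yields a long exact sequence after applying $\Ext^\bullet_A(\Bbbk,-)$. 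Since $A$ is AS-regular of dimension $d$ and both $H^0$ and $H^1$ are isomorphic to $A$ as left $A$-modules up to a twist, the groups $\Ext^\bullet_A(\Bbbk,H^0)$ and $\Ext^\bullet_A(\Bbbk,H^1)$ are concentrated in degree $d$; hence the $H^1$-contribution only reaches $\Ext^{d+1}$, and I obtain a natural isomorphism $\Ext^d_D(\Bbbk,M)\cong\Ext^d_A(\Bbbk,H^0)=\Ext^d_A(\Bbbk,M)$. I would define $\alpha$ to be this isomorphism for $M=A$, and write $\alpha'$ for its analogue with $M={}_\sigma A$.

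Finally I would read off the two squares of \cref{eq.changebasering} from naturality. The left square is the image, under the construction of the previous paragraph (natural in $M$), of the morphism $\sigma\colon A\to{}_\sigma A$; here one only needs that $\sigma$, having descended to $A$, is a genuine map of left $D$-modules, and then $\sigma_*$ is intertwined by $\alpha,\alpha'$. The right square expresses compatibility of $\alpha,\alpha'$ with the autoequivalence ${}_{\sigma^{-1}}(-)$, and this is the step I expect to be the main obstacle. I would establish it by showing that ${}_{\sigma^{-1}}(-)$ commutes with the change-of-rings functor $\Hom_D(A,-)$ up to the canonical identifications, i.e.\ that $\Hom_D(A,{}_{\sigma^{-1}}M)$ and ${}_{\sigma^{-1}}\Hom_D(A,M)$ are the same $\Omega$-torsion submodule of $M$ endowed with the same twisted $D$-action; this is exactly where $\sigma^{-1}(\Omega)\in\Bbbk\Omega$ enters, since it guarantees that twisting by $\sigma^{-1}$ preserves $\ker(\Omega)$. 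Granting this one compatibility, the whole derived construction of $\alpha$ is equivariant for ${}_{\sigma^{-1}}(-)$ and the right square commutes. As a consistency check, one verifies via Hilbert series (the Gorenstein parameter of $D$ being $\ell+m$) that both $\Ext^d$-groups are the one-dimensional graded space $\Bbbk(\ell)$, so that $\alpha$ is automatically a graded isomorphism.
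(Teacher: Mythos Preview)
Your proposal is correct and follows essentially the same route as the paper: both construct $\alpha$ via the derived adjunction $\RHom_D(\Bbbk,M)\cong\RHom_A(\Bbbk,\RHom_D(A,M))$, compute $\RHom_D(A,A)$ from the two-term resolution of $A$ to find cohomology $A$ and ${}_\tau A$ in degrees $0$ and $1$, and then invoke the Gorenstein vanishing $\Ext^{d-1}_A(\Bbbk,A)=0$ to isolate the degree-$d$ piece. The paper's one shortcut over your naturality check is to observe that the resulting isomorphism literally sends each Yoneda $d$-extension of $A$ by $\Bbbk$ to itself (an exact sequence of $A$-modules being one of $D$-modules), from which compatibility with both $\sigma_*$ and ${}_{\sigma^{-1}}(-)$ is immediate.
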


\begin{proof}
	Since $\sigma(\Bbbk\Omega)=\Bbbk\Omega$, $\sigma$ descends to an automorphism of $A$, which is also denoted by $\sigma$. By applying $\RHom_{D}(-,A)$ to the short exact sequence \cref{eq:res-of-A.2} of $D$-bimodules, we obtain the triangle in the first row of the diagram
	\begin{equation*}
		\begin{tikzcd}
			\RHom_{D}(A,A)\ar[r] & \RHom_{D}(D,A)\ar[r,"(\cdot\Omega)^{*}"] & \RHom_{D}({}_{1}D_{\tau},A)\ar[r] & \RHom_{D}(A,A)[1] \\
			\Hom_{D}(A,A)\ar[r]\ar[u] & \Hom_{D}(D,A)\ar[r,"(\cdot\Omega)^{*}"]\ar[u,"\wr"] & \Hom_{D}({}_{1}D_{\tau},A)\ar[u,"\wr"] \\
			A\ar[r,equal]\ar[u,"\wr"] & A\ar[r,"0"]\ar[u,"\wr"] & {}_{\tau}A\ar[u,"\wr"]
		\end{tikzcd}
	\end{equation*}
	in the left-bounded derived category of left $D$-modules. 
	Since this diagram commutes, the first morphism in the triangle admits a section passing through $A$ in the bottom. Hence we have a split triangle
	\begin{equation}\label{eq.changebasering.split.tri}
		\begin{tikzcd}
			A\ar[r] & \RHom_{D}(A,A)\ar[r] & {}_{\tau}A[-1]\ar[r] & A[1]
		\end{tikzcd}
	\end{equation}
	where the first morphism is the composite of canonical morphisms $A\isoto\Hom_{D}(A,A)\to\RHom_{D}(A,A)$.
	
	Let $A\to I^{\hdot}$ be an injective resolution of $A$ as a left $D$-module. Then $\Hom_{D}(A,I^{\hdot})$ is a complex of injective left $A$-modules and we have
	\begin{equation*}
		A\isoto\Hom_{A}(A,A)=\Hom_{D}(A,A)\to\Hom_{D}(A,I^{\hdot}).
	\end{equation*}
	This induces
	\begin{equation}\label{eq.changebasering.isom}
		\Ext_{A}^{d}(\Bbbk,A)\isoto\mathrm{H}^{d}\Hom_{A}(\Bbbk,\Hom_{D}(A,I^{\hdot}))\isoto\mathrm{H}^{d}(\Hom_{D}(\Bbbk,I^{\hdot}))=\Ext_{D}^{d}(\Bbbk,A)
	\end{equation}
	where the first isomorphism holds since we have \cref{eq.changebasering.split.tri} and $\mathrm{H}^{d}\Hom_{A}(\Bbbk,{}_{\tau}A[-1])\cong\Ext_{A}^{d-1}(\Bbbk,A)=0$ by the Gorenstein condition.
	
	The isomorphism $\Ext_{A}^{d}(\Bbbk,A)\isoto\Ext_{D}^{d}(\Bbbk,A)$ sends each $d$-extension of $A$ by $\Bbbk$ to itself. To see this, let 
	\begin{equation*}
		\xi\colon 0\to A\to E^{0}\to\cdots\to E^{d-1}\to\Bbbk\to 0
	\end{equation*}
	be a $d$-extension of left $A$-modules and let $A\to J^{\hdot}$ be an injective resolution of $A$ as a left $A$-module. Lifting up the canonical isomorphism $A\isoto\Hom_{D}(A,A)$, we obtain a commutative diagram
	\begin{equation*}
		\begin{tikzcd}
			0\ar[r] & A\ar[d,equal]\ar[r] & E^{0}\ar[d]\ar[r] & \cdots\ar[r] & E^{d-1}\ar[d]\ar[r] & \Bbbk\ar[d]\ar[r] & 0\rlap{.} \\
			0\ar[r] & A\ar[d,"\wr"]\ar[r] & J^{0}\ar[d]\ar[r] & \cdots\ar[r] & J^{d-1}\ar[d]\ar[r] & J^{d}\ar[d] \\
			0\ar[r] & \Hom_{D}(A,A)\ar[r] & \Hom_{D}(A,I^{0})\ar[r] & \cdots\ar[r] & \Hom_{D}(A,I^{d-1})\ar[r] & \Hom_{D}(A,I^{d})
		\end{tikzcd}
	\end{equation*}
	The image of $\xi$ by the first morphism in \cref{eq.changebasering.isom} is represented by the composite of the right-most vertical arrows. Applying the adjunction $(A\otimes_{A}-)\dashv\Hom_{D}(A,-)$ to the diagram, we obtain
	\begin{equation*}
		\begin{tikzcd}
			0\ar[r] & A\ar[d,equal]\ar[r] & E^{0}\ar[d]\ar[r] & \cdots\ar[r] & E^{d-1}\ar[d]\ar[r] & \Bbbk\ar[d]\ar[r] & 0\rlap{.} \\
			0\ar[r] & A\ar[r] & I^{0}\ar[r] & \cdots\ar[r] & I^{d-1}\ar[r] & I^{d}
		\end{tikzcd}
	\end{equation*}
	The right-most vertical morphism represents the image of $\xi$ by the composite \cref{eq.changebasering.isom}. Hence the image is the element of $\Ext_{D}^{d}(\Bbbk,A)$ represented by $\xi$ regarded as a $d$-extension of left $D$-modules.
	
	The commutativity of the diagram in the statement now follows from the above description using a $d$-extension.
\end{proof}

\begin{lemma}\label{lem.connectingmor}
Let $\sigma \in \Aut_{\sf gr}(D)$.
If $\Omega$ is an eigenvector for $\sigma$ with eigenvalue $\lambda$, then there is an isomorphism 
$\delta\colon\Ext_{D}^{d}(\Bbbk,A)\isoto\Ext_{D}^{d+1}(\Bbbk,D)$ that makes the diagram 
	\begin{equation}\label{eq.connectingmor}
		\begin{tikzcd}[column sep=large]
			\Ext_{D}^{d}(\Bbbk,A)\ar[r,"\sigma_{*}","\sim"']\ar[d,"\delta"',"\wr"] & \Ext_{D}^{d}(\Bbbk,{}_\sigma A)\ar[r,"{}_{\sigma^{-1}}(-)","\sim"'] & \Ext_{D}^{d}(\Bbbk,A)\ar[d,"\lambda^{-1}\delta","\wr"'] \\
			\Ext_{D}^{d+1}(\Bbbk,D)\ar[r,"\sigma_{*}"',"\sim"] & \Ext_{D}^{d+1}(\Bbbk,{}_\sigma D)\ar[r,"{}_{\sigma^{-1}}(-)"',"\sim"] & \Ext_{D}^{d+1}(\Bbbk,D)
		\end{tikzcd}
	\end{equation}
commutative.
\end{lemma}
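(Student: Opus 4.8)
The plan is to realize $\delta$ as the connecting homomorphism of the long exact sequence obtained by applying $\Ext_D(\Bbbk,-)$ to the short exact sequence of left $D$-modules
\begin{equation*}
0 \longrightarrow D(-m) \xrightarrow{\,\cdot\Omega\,} D \longrightarrow A \longrightarrow 0,
\end{equation*}
and then to deduce the commutativity of \cref{eq.connectingmor} from the naturality of that connecting map together with the single fact that $\sigma(\Omega)=\lambda\Omega$.

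First I would verify that $\delta$ is an isomorphism. Since $A$ is AS-regular of dimension $d$, the algebra $D$ is AS-regular of dimension $d+1$ by \cref{item.enh.alg} of \Cref{pr.enh}, so $\Ext_D^i(\Bbbk,D)$ — and hence $\Ext_D^i(\Bbbk,D(-m))$ — vanishes for $i\neq d+1$. In particular $\Ext_D^d(\Bbbk,D)=0$, so the relevant portion of the long exact sequence forces $\delta\colon\Ext_D^d(\Bbbk,A)\to\Ext_D^{d+1}(\Bbbk,D(-m))$ to be injective; as both sides are one-dimensional (the source by \Cref{lem.changebasering}, the target by regularity of $D$), $\delta$ is an isomorphism. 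Throughout I identify $\Ext_D^{d+1}(\Bbbk,D(-m))$ with $\Ext_D^{d+1}(\Bbbk,D)$ as ungraded vector spaces, which is harmless because the degree-preserving $\sigma$ commutes with this shift.

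The scalar $\lambda^{-1}$ arises from exactly one place. As $\sigma(\Omega)=\lambda\Omega$, the automorphism $\sigma$ preserves $(\Omega)$, descends to $A$, and carries the displayed sequence to its $\sigma$-twist $0\to{}_\sigma(D(-m))\to{}_\sigma D\to{}_\sigma A\to 0$. The triple acting by $\sigma$ on the middle and right terms extends to a morphism of short exact sequences only if the leftmost vertical map is taken to be $\lambda\sigma$: for $d\in D(-m)$ one has $\sigma(d\Omega)=\lambda\sigma(d)\Omega$, so the square built from $\cdot\Omega$ commutes only after this scaling. Naturality of the connecting homomorphism then gives $\delta'\circ\sigma_*=(\lambda\sigma)_*\circ\delta=\lambda\,(\sigma_*\circ\delta)$, where $\delta'$ is the connecting map of the twisted sequence, $\sigma_*$ denotes the maps induced on $\Ext$ by $\sigma$ on the coefficients, and the last equality uses the $\Bbbk$-linearity of $\Ext_D(\Bbbk,-)$ in the coefficient map. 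Postcomposing with the exact autoequivalence ${}_{\sigma^{-1}}(-)$, which commutes with connecting maps and satisfies ${}_{\sigma^{-1}}(-)\circ\delta'=\delta\circ{}_{\sigma^{-1}}(-)$ because it sends the twisted sequence back to the original one, and rearranging, I obtain $\delta\circ(\text{top row})=\lambda\,(\text{bottom row})\circ\delta$, i.e. $\lambda^{-1}\delta\circ(\text{top row})=(\text{bottom row})\circ\delta$, which is the asserted commutativity of \cref{eq.connectingmor}.

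The step needing the most care is the bookkeeping of the twist functors: tracking which $\Ext$ group each arrow inhabits, confirming that ${}_{\sigma^{-1}}(-)$ intertwines $\delta$ with $\delta'$, and checking that the factor-$\lambda$ discrepancy on the leftmost term — combined with the $\Bbbk$-linearity of $\Ext$ — surfaces as precisely $\lambda^{-1}$ on the right-hand vertical arrow rather than, say, $\lambda$ on the left. The vanishing and one-dimensionality inputs are routine given \Cref{pr.enh} and \Cref{lem.changebasering}; the genuine content is this single careful diagram chase.
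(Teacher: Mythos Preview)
Your proof is correct and follows essentially the same approach as the paper: both realize $\delta$ as the connecting map $\varepsilon_*$ of the triangle $D\xrightarrow{\cdot\Omega}D\to A\xrightarrow{\varepsilon}D[1]$, observe that the morphism of triangles induced by $\sigma$ acquires the factor $\lambda$ on the $D[1]$-vertex because $\sigma(\Omega)=\lambda\Omega$, and then pass back via the autoequivalence ${}_{\sigma^{-1}}(-)$. The paper encodes this as a single three-row diagram of triangles and reads off the two-square $\Ext$ diagram directly, while you phrase it as naturality of the connecting homomorphism plus the relation ${}_{\sigma^{-1}}(-)\circ\delta'=\delta\circ{}_{\sigma^{-1}}(-)$; these are the same argument, and your explicit check that $\delta$ is an isomorphism (via the Gorenstein vanishing for $D$) is a detail the paper leaves implicit.
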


\begin{proof}
	The short exact sequence \cref{eq:res-of-A.1} is extended to the commutative diagram in the derived category
	\begin{equation*}
		\begin{tikzcd}[column sep=huge]
			D\ar[r,"\cdot\Omega"]\ar[d,"\sigma","\wr"'] & D\ar[r]\ar[d,"\sigma","\wr"'] & A\ar[r,"\varepsilon"]\ar[d,"\sigma","\wr"'] & D[1]\ar[d,"{\sigma[1]}","\wr"'] \\
			{}_{\sigma}D\ar[r,"\cdot\sigma(\Omega)"]\ar[d,"\lambda","\wr"'] & {}_{\sigma}D\ar[r]\ar[d, equal] & {}_{\sigma}A\ar[r]\ar[d, equal] & {}_{\sigma}D[1]\ar[d,"\lambda","\wr"'] \\
			{}_{\sigma}D\ar[r,"(\cdot\Omega)={}_{\sigma}(\cdot\Omega)"] & {}_{\sigma}D\ar[r] & {}_{\sigma}A\ar[r,"{}_{\sigma}\varepsilon"] & {}_{\sigma}D[1]
		\end{tikzcd}
	\end{equation*}
	where each row is a triangle. The third and fourth columns yield the commutative diagram
	\begin{equation*}
		\begin{tikzcd}[column sep=huge]
			\Ext_{D}^{d}(\Bbbk,A)\ar[r,"\varepsilon_{*}","\sim"']\ar[d,"\sigma_{*}"',"\wr"] & \Ext_{D}^{d+1}(\Bbbk,D)\ar[d,"\lambda\sigma_{*}","\wr"'] \\
			\Ext_{D}^{d}(\Bbbk,{}_{\sigma}A)\ar[r,"({}_{\sigma}\varepsilon)_{*}","\sim"']\ar[d,"{}_{\sigma^{-1}}(-)"',"\wr"] & \Ext_{D}^{d+1}(\Bbbk,{}_{\sigma}D)\ar[d,"{}_{\sigma^{-1}}(-)","\wr"'] \\
			\Ext_{D}^{d}(\Bbbk,A)\ar[r,"\varepsilon_{*}","\sim"'] & \Ext_{D}^{d+1}(\Bbbk,D)
		\end{tikzcd}
	\end{equation*}
	Letting $\delta:=\Ext^{d}(\Bbbk,\varepsilon)$ completes the proof.
\end{proof}

\begin{theorem}\label{th.hdets}
  Let $D$ be a connected graded algebra, $\Omega\in D_m$ a normal regular element of degree $m$, and let $A = D/(\Omega)$.
Assume that $A$ is twisted Calabi-Yau of dimension $d$.  Let $\sigma \in \Aut_{\sf gr}(D)$.
If $\Omega\in D$ is an eigenvector for $\sigma$ with eigenvalue $\lambda$, then 
$$
\hdet_{D}(\sigma) \;=\; \lambda\hdet_{A}(\sigma|_{A}).
$$
\end{theorem}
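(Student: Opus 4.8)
The plan is to combine the two preceding lemmas, \Cref{lem.changebasering} and \Cref{lem.connectingmor}, to transport the defining diagram \cref{eq.hdet2} for $\hdet_A(\sigma|_A)$ into a diagram built from $\Ext^{d+1}_D(\Bbbk,D)$ that computes $\hdet_D(\sigma)$, tracking the extra factor of $\lambda$ that appears along the way. The key observation is that all three relevant one-dimensional spaces---$\Ext^d_A(\Bbbk,A)$, $\Ext^d_D(\Bbbk,A)$, and $\Ext^{d+1}_D(\Bbbk,D)$---are linked by the horizontal isomorphisms $\alpha$ (from \Cref{lem.changebasering}) and $\delta$ (from \Cref{lem.connectingmor}), and each of these isomorphisms is \emph{compatible} with the operations $\sigma_*$ and ${}_{\sigma^{-1}}(-)$ that enter the definition of the homological determinant, up to an explicit scalar discrepancy.

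Concretely, I would first recall that $\hdet_A(\sigma|_A)$ is the unique scalar so that the horizontal composite in \cref{eq.hdet2}, namely ${}_{\sigma^{-1}}(-)\circ\sigma_*$ on $\Ext^d_A(\Bbbk,A)$, equals multiplication by $(\hdet_A(\sigma|_A))^{-1}$; similarly $\hdet_D(\sigma)$ is read off from the analogous horizontal composite on $\Ext^{d+1}_D(\Bbbk,D)$, using that $D$ is twisted Calabi--Yau of dimension $d+1$ (so that $\Ext^{d+1}_D(\Bbbk,D)$ is the one-dimensional space governing $D$'s homological determinant). The chain of isomorphisms
\begin{equation*}
	\Ext^d_A(\Bbbk,A)\xrightarrow{\ \alpha^{-1}\ }\Ext^d_D(\Bbbk,A)\xrightarrow{\ \delta\ }\Ext^{d+1}_D(\Bbbk,D)
\end{equation*}
provides the bridge. \Cref{lem.changebasering} says $\alpha$ intertwines the horizontal composites on the $A$- and $D$-versions of $\Ext^d(\Bbbk,A)$ \emph{exactly} (no scalar), so the composite ${}_{\sigma^{-1}}(-)\circ\sigma_*$ acts as $(\hdet_A(\sigma|_A))^{-1}$ on $\Ext^d_D(\Bbbk,A)$ as well. \Cref{lem.connectingmor} then says $\delta$ intertwines the horizontal composite on $\Ext^d_D(\Bbbk,A)$ with that on $\Ext^{d+1}_D(\Bbbk,D)$, but with a twist: the right-hand vertical map is $\lambda^{-1}\delta$ rather than $\delta$, so passing through $\delta$ multiplies the recorded scalar by $\lambda$.

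Assembling these, the horizontal composite on $\Ext^{d+1}_D(\Bbbk,D)$ equals $\lambda^{-1}\cdot(\hdet_A(\sigma|_A))^{-1}$ times the identity, whence $(\hdet_D(\sigma))^{-1}=\lambda^{-1}(\hdet_A(\sigma|_A))^{-1}$, i.e. $\hdet_D(\sigma)=\lambda\,\hdet_A(\sigma|_A)$, as claimed. The one point requiring care---and the main obstacle---is verifying that the scalar $\lambda$ in \Cref{lem.connectingmor} combines correctly with the \emph{inverse} convention baked into \cref{eq.hdet2}: one must check that the $\lambda^{-1}$ appearing on the right vertical arrow of \cref{eq.connectingmor} translates into precisely a factor of $\lambda$ (not $\lambda^{-1}$) in the relation between the two homological determinants, which amounts to chasing the scalar through the commutative rectangle and remembering that $\hdet$ is defined via the \emph{reciprocal} of the horizontal composite. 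Once the bookkeeping of these two sign/inversion conventions is pinned down, the result follows by stacking the two diagrams vertically and reading off the composite scalar.
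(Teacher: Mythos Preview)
Your proposal is correct and follows essentially the same route as the paper: both proofs stack the commutative diagrams from \Cref{lem.changebasering} and \Cref{lem.connectingmor} onto the defining diagram \cref{eq.hdet2} for $D$, then read off the relation $(\hdet_A(\sigma|_A))^{-1}=\lambda(\hdet_D(\sigma))^{-1}$ from the resulting composite, with the factor $\lambda^{-1}$ on the right vertical of \cref{eq.connectingmor} producing exactly the discrepancy you identify. Your bookkeeping of the inverse convention is accurate.
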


\begin{proof}
	The concatenation of the diagram \cref{eq.hdet2} for $D$ with \cref{eq.changebasering} and \cref{eq.connectingmor} yields the commutative diagram
	\begin{equation*}
		\begin{tikzcd}[column sep=large]
			\Ext_{A}^{d}(\Bbbk,A)\ar[r,"\sigma_{*}","\sim"']\ar[d,"\wr"'] & \Ext_{A}^{d}(\Bbbk,{}_\sigma A)\ar[r,"{}_{\sigma^{-1}}(-)","\sim"'] & \Ext_{A}^{d}(\Bbbk,A)\ar[d,"\wr"] \\
			\Bbbk(\ell)\ar[rr,"(\hdet_{D}(\sigma))^{-1}"',"\sim"] & & \Bbbk(\ell)
		\end{tikzcd}
	\end{equation*}
	where the right isomorphism is equal to the left isomorphism multiplied by $\lambda^{-1}$. This shows $(\hdet_{A}(\sigma|_{A}))^{-1}=(\hdet_{D}(\sigma))^{-1}\lambda$.
\end{proof}

\subsection{An equivariant formulation of the homological determinant}

In this section we give a slightly different interpretation of the 
homological determinant. In this section $A$
denotes a  $d$-dimensional Artin-Schelter regular $\Bbbk$-algebra with
Gorenstein parameter $\ell$ and $\sigma$ is a
graded $\Bbbk$-algebra automorphism of $A$.

We assume some familiarity with group actions on categories and equivariantizations. 
A good reference  is \cite[$\S$2.7]{egno}. Additional details can be found in the appendix of  \cite{CS15}. 
 Since categories in this paper are $\Bbbk$-linear, all functors in the following discussion are required to  be $\Bbbk$-linear.

Recall \cite[Defns.~2.7.1 and 2.7.2]{egno}; because
 of the wide availability of resources on the topic we merely paraphrase the definitions here.

\begin{definition}\label{def.act}
  An {\sf action} of a group $\Gamma$ on a category $\cC$ is a
  collection of auto-equivalences $T_g:\cC \to \cC$, $g\in \Gamma$,
and natural isomorphisms $T_g\circ T_h\cong T_{gh}$ satisfying the
obvious compatibility conditions.

  Given an action of $\Gamma$ on $\cC$, the category $\cC^{\Gamma}$ of
  {\sf $\G$-equivariant objects}
  is the category whose objects are collections of isomorphisms 
  \begin{equation*}
    (\varphi_g:T_gx\to x \; | \; g \in \G)
  \end{equation*}
 that   are compatible with the endofunctors $T_g$
  in the sense that the diagrams 
  \begin{equation*}
    \begin{tikzpicture}[auto,baseline=(current  bounding  box.center)]
      \path[anchor=base] (0,0) node (tgh) {$T_{gh}x$} +(7.2,0) node (x) {$x$} +(2.4,-.5) node (tgth) {$T_gT_hx$} +(4.8,-.5) node (tg) {$T_gx$};

      \draw[->] (tgh) to[bend left=6] node[pos=.5,auto] {$\scriptstyle \varphi_{gh}$} (x);
      \draw[->] (tgh) to[bend right=6] node[pos=.5,auto,swap] {$\scriptstyle\cong$} (tgth);
      \draw[->] (tgth) to[bend right=4] node[pos=.5,auto,swap] {$\scriptstyle T_g\varphi_h$} (tg);
      \draw[->] (tg) to[bend right=6] node[pos=.5,auto,swap] {$\scriptstyle\varphi_g$} (x);
    \end{tikzpicture}
  \end{equation*}
  commute for all $g,h \in \G$, where the lower isomorphism is the   structural one defining the $\Gamma$-action on $\cC$.
  The morphisms in $\cC^\Gamma$ are those morphisms $f:x\to y$ in $\cC$ for which the diagrams
\begin{equation*}
  \begin{tikzcd}[column sep=huge]
    T_gx\ar[r,"T_gf"]\ar[d,"\varphi_g"'] & T_gy\ar[d,"\varphi_g"] \\
    x\ar[r,"f"'] & y
  \end{tikzcd}
\end{equation*}
commute for all $g\in \Gamma$.
\end{definition}

Now consider objects $x,y\in \cC^{\Gamma}$ (we are abusing notation by
suppressing the isomorphisms $\varphi_g:T_gx\to x$). The action of
$\G$ on $\cC$ induces an action $\triangleright$ of $\Gamma$ on
$\Hom_{\cC}(x,y)$ defined by the commutativity of the diagram
\begin{equation}\label{eq:conj-act}
  \begin{tikzcd}[column sep=huge]
    T_gx\ar[r,"T_g(g^{-1}\triangleright f)"]\ar[d,"\varphi_g"'] & T_gy\ar[d,"\varphi_g"] \\
    x\ar[r,"f"'] & y
  \end{tikzcd}
\end{equation}

\begin{remark}\label{re.inj-act}
  When $\cC$ is abelian a $\Gamma$-action passes over to any of the
  derived categories associated to $\cC$ (the bounded derived category
  $\cD^b(\cC)$, the left bounded one $\cD^+(\cC)$, etc.). We assume below that $\cC$ has enough injectives.
  
  For an equivariant object $y$ in $\cC^\Gamma$ as above the
  isomorphisms $\varphi_g:T_gy\to y$ lift to isomorphisms between the
  injective resolutions $T_gE^\hdot$ and $E^\hdot$ of $T_gy$ and $y$
  respectively, and given another equivariant object $x\in \cC$ the
  action of $\Gamma$ on
  \begin{equation*}
    \Ext^d_{\cC}(x,y) = \Hom_\cD(x,y[d]) 
  \end{equation*}
  can be computed via the action on $\Hom_\cC(x,E^d)$ as in
  \Cref{eq:conj-act}, using the isomorphism $\varphi_g:T_gx\to x$ as the
  left-hand vertical arrow and the isomorphism $T_gE^d\to E^d$ as the
  right-hand vertical map.
  
All group actions on derived categories considered below arise in this
fashion, from an action on the original abelian category, and
similarly, we only consider equivariant objects in $\cD^b(\cC)$ that
are lifts of those in $\cC$ in the manner described above.
\end{remark}

We now apply the above to the group
$\Gamma=\bZ$ acting via the twist $M\mapsto {}_{\sigma}M$ for $M$ in
the category of $A$-modules, graded modules, complexes thereof,
derived categories, etc. Since the group $\Gamma$ is generated by the
single element $g$ mapping to $\sigma\in \langle \sigma\rangle$,
equivariant structures are determined by single automorphisms
$\varphi^x=\varphi_g:T_gx\to x$.

By definition, for (graded) $A$-modules the twist functor 
\begin{equation*}
  M \; \mapsto \; T_gM \; =\; {}_{\sigma}M
\end{equation*}
does not change the underlying vector spaces and morphisms, so the  diagram \Cref{eq:conj-act} above, for equivariant objects $(M,\varphi^M)$ and
$(N,\varphi^N)$, reads
\begin{equation}\label{eq:conj-act-concr}
  \begin{tikzcd}[column sep=huge]
    {}_{\sigma}M\ar[r,"g^{-1}\triangleright f"]\ar[d,"\varphi^M"'] & {}_{\sigma}N\ar[d,"\varphi^N"] \\
    M\ar[r,"f"'] & N
  \end{tikzcd}
\end{equation}

We can apply the discussion above to the bounded derived category
$\cD^b=\cD^b(\Mod(A))$ of left  $A$-modules with $\Gamma=\bZ$ once more
acting by powers by $\sigma$.

Now regard all $A/A_{\ge k}$, $k\le \infty$ (where $A/A_{\ge\infty}$ means $A$), and their homological shifts $(A/A_{\ge k})[p]$ in
$\cD^b$ as $\sigma$-equivariant $A$-modules  via
$\varphi^{(A/A_{\ge k})[p]}:=\sigma^{-1}[p]$. This renders the following
statement meaningful; it is a repackaging of \cite[Defn.~2.3]{jz}.

\begin{proposition}\label{le.hdet-alt}
  The inverse $\hdet^{-1}:=\hdet(\sigma)^{-1}$ of the homological determinant is
  the scalar by which $g^{-1}\triangleright$   acts on the
  degree-$(-\ell)$ component of any of the spaces $H_{\fm}^d(A)$,
  $\Ext_A^d(\Bbbk,A)$, or $\Ext_A^d(\Bbbk,\Bbbk)$. 
\end{proposition}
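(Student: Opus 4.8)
The plan is to recognize the three asserted scalars as one and the same equivariant action, and then to match that action against the map occupying the top row of the defining diagrams \cref{eq.hdet} and \cref{eq.hdet2}, which is multiplication by $\hdet(\sigma)^{-1}$ by definition. All of the content is in translating the abstract recipe of \cref{def.act} and \cref{re.inj-act} into the concrete two-step composites used by J{\o}rgensen-Zhang, so the proof is, as advertised, a repackaging of \cite[Defn.~2.3]{jz}.

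I would begin with $\Ext_A^d(\Bbbk,A)$. Here $\Bbbk$ and $A$ carry the equivariant structures $\varphi^\Bbbk=\sigma^{-1}$ and $\varphi^A=\sigma^{-1}$ fixed just before the statement. Reading \cref{eq:conj-act-concr} at the level of $\Hom$ gives $g^{-1}\triangleright f=(\varphi^A)^{-1}\circ f\circ\varphi^\Bbbk$, and since $\sigma$ acts as the identity on $\Bbbk=A_0$ this is the map $f\mapsto\sigma\circ f$ on underlying maps. On the other side, the top row of \cref{eq.hdet2} is the composite of $\sigma_*$, induced by the left-module map $\sigma\colon A\to{}_\sigma A$, with the twist ${}_{\sigma^{-1}}(-)$ together with the canonical identifications ${}_{\sigma^{-1}}{}_\sigma A=A$ and ${}_{\sigma^{-1}}\Bbbk=\Bbbk$; computing underlying maps this composite is again $f\mapsto\sigma\circ f$. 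Promoting this from $\Hom$ to $\Ext^d$ is exactly \cref{re.inj-act}: the action on $\Ext_A^d(\Bbbk,A)=\Hom_{\cD}(\Bbbk,A[d])$ is computed through an injective resolution $E^\bullet$ of $A$ using $\varphi^\Bbbk$ on the source and the lift of $\varphi^A$ to $E^\bullet$ on the target, and these are precisely the data packaged by the two arrows of \cref{eq.hdet2}. Hence $g^{-1}\triangleright$ acts on $\Ext_A^d(\Bbbk,A)$ as that top composite, which \cref{eq.hdet2} records as multiplication by $\hdet(\sigma)^{-1}$; since the space sits entirely in degree $-\ell$, this disposes of the $\Ext_A^d(\Bbbk,A)$ case.

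For $H_{\fm}^d(A)$ I would run the same identification one level up: the top row of \cref{eq.hdet}, taken in the colimit over $x=A/A_{\geq n}$, is the action $g^{-1}\triangleright$ on $H_{\fm}^d(A)$, while the bottom row expresses it as $\hdet(\sigma)^{-1}(\sigma^{-1})'$ on $A'(\ell)$. Restricting to degree $-\ell$, where $(\sigma^{-1})'$ acts on $(A')_0=\Hom_\Bbbk(A_0,\Bbbk)$ as the identity because $\sigma$ fixes $A_0$, leaves the scalar $\hdet(\sigma)^{-1}$; this is compatible with the $\Ext_A^d(\Bbbk,A)$ case through the equivariant edge map $\Ext_A^d(\Bbbk,A)\to H_{\fm}^d(A)$ already used in passing from \cref{eq.hdet} to \cref{eq.hdet2}. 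Finally, for $\Ext_A^d(\Bbbk,\Bbbk)$ I would transport the answer along the augmentation $\varepsilon\colon A\to\Bbbk$: it is $\sigma$-equivariant for the structures $\sigma^{-1}$, so $\varepsilon_*\colon\Ext_A^d(\Bbbk,A)\to\Ext_A^d(\Bbbk,\Bbbk)$ intertwines the two $g^{-1}\triangleright$ actions, and reading off the minimal free resolution of $\Bbbk$ shows $\varepsilon_*$ is an isomorphism in degree $-\ell$ (both sides are one-dimensional there, carried by the top free generator), so the scalar passes over unchanged.

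The step I expect to be the real obstacle is the one in the second paragraph: faithfully matching the abstract action of \cref{re.inj-act} with the explicit composite $\sigma_*$ followed by ${}_{\sigma^{-1}}(-)$, while keeping straight the directions of the structure isomorphisms $\sigma^{-1}\colon{}_\sigma(-)\to(-)$, the identifications ${}_{\sigma^{-1}}{}_\sigma(-)=(-)$, and the chosen lifts to injective resolutions, so that one lands on the conjugation $f\mapsto\sigma\circ f\circ\sigma^{-1}$ and not on its inverse. Once this bookkeeping is settled the remaining assertions are formal.
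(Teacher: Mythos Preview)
Your proposal is correct and follows essentially the same approach as the paper: both identify the $g^{-1}\triangleright$ action with the conjugation $f\mapsto\sigma\circ f\circ\sigma^{-1}$ coming from the equivariant structures $\varphi=\sigma^{-1}$, match this against the J{\o}rgensen--Zhang definition, and verify the three spaces carry the same scalar. The only cosmetic differences are ordering (the paper first proves the three scalars coincide and then matches one of them to $\hdet^{-1}$, whereas you match $\Ext_A^d(\Bbbk,A)$ first and then transport) and the justification that $\varepsilon_*\colon\Ext_A^d(\Bbbk,A)\to\Ext_A^d(\Bbbk,\Bbbk)$ is an isomorphism in degree $-\ell$: the paper uses the long exact sequence attached to $0\to A_{\ge 1}\to A\to\Bbbk\to 0$ together with the vanishing of $\Ext_A^{d+1}$, while you read it off the top free module $A(-\ell)$ in the minimal resolution.
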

\begin{proof}
  First, note that the three scalars in the statement are indeed
  equal:

  As far as the last two are concerned, the long exact sequence
  attached to the short exact sequence
  \begin{equation*}
    0\to A_{\ge 1}\to A\to\Bbbk\to 0,
  \end{equation*}
  the vanishing of $\Ext_A^{d+1}$ and the fact that $\Ext_A^d(\Bbbk,A)$
  and $\Ext_A^d(\Bbbk,\Bbbk)$ are both one-dimensional vector spaces jointly
  ensure that the induced map
  \begin{equation*}
    \Ext_A^d(\Bbbk,A)\cong \Ext_A^d(\Bbbk,\Bbbk)
  \end{equation*}
  is an isomorphism.

  On the other hand, the first two scalars in the statement are equal
  because the $\sigma$-equivariant maps $A/A_{\ge k+1}\to A/A_{\ge k}$
  induce a $\Gamma$-module morphism
  \begin{equation*}
    \Ext_A^d(\Bbbk,A)\to H_{\fm}^d(A) = \varinjlim \Ext_A^d(A/A_{\ge k},A)
  \end{equation*}
  that identifies degree-$(-\ell)$ components.

Thus, it suffices to prove that the scalar $\hdet^{-1}$
  defined via \Cref{eq.hdet2} agrees with the eigenvalue of $g^{-1}$
  on $\Ext_A^d(\Bbbk,A)$.
  We proceed to do this.

  \cite[Defn.~2.3]{jz} (which \Cref{eq.hdet2} follows) dictates
  that the scalar $\hdet^{-1}$ is computed as follows:

  Consider the minimal injective resolution
  \begin{equation*}
    0\to A\to E^0\to E^1\to\cdots
  \end{equation*}
  of $A$ in $\Mod(A)$, and let $\sigma$ act on it so as to extend the
  action on $A$. Then, $\hdet^{-1}$ will be the scaling that $\sigma$
  induces on the one-dimensional torsion of the $d^{th}$ term
  $E^d$. This is also what we obtain by making $g$ (the generator of
  $\Gamma=\bZ$) act on $\Hom(\Bbbk,E^d)$ by sending a vector space
  morphism $f:\Bbbk\to E^d$ to $\sigma\circ f\circ \sigma^{-1}$, which is
  what \Cref{eq:conj-act-concr} amounts to in the present setting (see \Cref{re.inj-act}).
\end{proof}

\section{Normal extensions of $3$-dimensional regular algebras}
\label{sect.main.thm}

In this section we describe a technique for producing $4$-dimensional Artin-Schelter regular algebras by modifying
$m$-Koszul $3$-dimensional Artin-Schelter regular algebras.  
Again, our regular algebras need not be noetherian.
We first recall some conventions and
terminology from various sources on $m$-Koszul and superpotential
algebras, such as \cite{dv1,dv2,bsw,MS16}.

\subsection{Superpotential algebras}
\label{subsec.superpotential}
For our purposes, the relevant setup is as follows. 

We fix an $n$-dimensional vector space $V$ that will be the common space of
degree-one generators for the algebras under discussion.
We also fix  integers $\ell \ge m \ge 2$, an invertible linear transformation $Q\in {\rm GL}(V)$,   and an element
${\sf w}\in V^{\otimes \ell}$ that is invariant under the linear map
\begin{equation*}
V^{\otimes \ell} \longrightarrow  V^{\otimes \ell}, \quad v_1\otimes \ldots \otimes v_\ell \mapsto (Q^{-1}v_\ell) \otimes v_1 \otimes \ldots \otimes v_{\ell-1}.
\end{equation*}
We call such a $\sfw$ a {\sf $Q$-twisted superpotential} or just a {\sf twisted superpotential} if we don't want to specify $Q$.
If $Q=\mathrm{id}_{V}$ we simply call ${\sf w}$ a {\sf superpotential}.

Frequently, we assume that $Q$ is diagonalizable, but see (2) in \S\ref{re.gen}.

Let   $W$ be a subspace of some tensor power,  $V^{\otimes p}$ say.
We introduce the following notation:
\begin{align*}
\pd W  & \; :=\; \big\{(\psi \otimes \id^{\otimes p-1})(\sfw) \; \big\vert \; \psi \in V^* \; \text{and} \; \sfw \in W\big\},
\\
\pd^{i+1}W  &\; :=\;  \pd(\pd^i W) \quad \hbox{for all $i \ge 0$,}
\\
A(W, i)  &\; :=\; TV/(\partial^iW).
\end{align*}
The space $\pd^iW$ appears in \cite[\S4]{dv2} where it is denoted $W^{(p-i)}$. 

Let $\sfw\in V^{\otimes \ell}$ be a $Q$-twisted superpotential.  We call the algebra
$$
A(\sfw, i) \; :=\;  A(\Bbbk\sfw, i)
$$
a  {\sf superpotential algebra}.
In \cite{bsw}, $A(\sfw, i)$ is called a derivation-quotient algebra.
We are mostly interested in the case $i=1$ and we then write
$$
A(\sfw) \; := \; A(\sfw,1).
$$

\begin{theorem}
[Dubois-Violette]
 \textnormal{\cite[Thm.~11]{dv2}}
 \cite[Thm.~6.8]{bsw}
\label{thm.DV} 
If $A$ is an $m$-Koszul Artin-Schelter regular algebra of dimension $d$ with Gorenstein parameter $\ell$, then there is a twisted superpotential
$\sfw\in V^{\otimes \ell}$ such that  
$$
A \;  = \; A(\sfw, \ell-m)
$$
where such $\sfw$ is unique up to non-zero scalar multiple.

The global dimension $d$ of $A({\sfw},\ell-m)$ can be recovered
from the other numerical data as follows:
\begin{itemize}
\item if $m=2$, then $d=\ell$;
\item if $m\ge 3$, then $d$ is odd and $\ell=m\frac{d-1}{2}+1$.
\end{itemize}
\end{theorem}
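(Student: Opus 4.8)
The plan is to reconstruct the statement from the structure of the minimal graded free resolution of the trivial module $\Bbbk$ over an $m$-Koszul algebra, in the form developed by Berger \cite{Berger01} and exploited by Dubois-Violette \cite{dv2} and Bocklandt-Schedler-Wemyss \cite{bsw}. Writing $A=TV/(R)$ with $R\subseteq V^{\otimes m}$, I would first record that the minimal resolution of ${}_A\Bbbk$ has the shape
\[
\cdots \to A\otimes W_i \to \cdots \to A\otimes W_1 \to A\otimes W_0 \to \Bbbk \to 0,
\]
with $W_0=\Bbbk$, $W_1=V$, $W_2=R$, and each $W_i$ the single-degree space prescribed by the $m$-Koszul condition, concentrated in degree $\delta(i)$ where $\delta(2k)=km$ and $\delta(2k+1)=km+1$. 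That $\Ext^i_A(\Bbbk,\Bbbk)$ lives in one degree for each $i$ is exactly the $m$-Koszul hypothesis, and the AS-regularity hypothesis guarantees the resolution is finite of length $\gldim A=d$.

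Next I would extract the numerics from the Gorenstein symmetry. Dualizing the resolution via $\Hom_A(-,A)$ and using that $\Ext^i_A(\Bbbk,A)$ vanishes for $i\ne d$ and equals $\Bbbk(\ell)$ for $i=d$ produces a self-duality pairing $W_i$ with $W_{d-i}$ and matching degrees by $\delta(i)+\delta(d-i)=\ell$. In particular $W_d$ is $1$-dimensional and sits in degree $\ell=\delta(d)$, while pairing $W_1=V$ with $W_{d-1}$ forces $\delta(d-1)=\ell-1$. For $m=2$ one has $\delta(i)=i$, so $\ell=\delta(d)=d$. For $m\ge 3$, suppose $d=2k$ were even: then $\delta(d)=km=\ell$ but $\delta(d-1)=(k-1)m+1$, and the constraint $\delta(d-1)=\ell-1=km-1$ would give $(k-1)m+1=km-1$, i.e. $m=2$, a contradiction. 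Hence $d$ is odd, say $d=2k+1$, and $\ell=\delta(d)=km+1=m\tfrac{d-1}{2}+1$.

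For existence and uniqueness of the superpotential I would take $\sfw$ to span the $1$-dimensional top term $W_d\subseteq V^{\otimes\ell}$; uniqueness of the line $\Bbbk\sfw$ is immediate from the minimality of the resolution, and independence of the basis is clear since $W_d$ is intrinsic. The twist $Q$ should be read off from the self-duality: the Gorenstein pairing is not symmetric but is twisted by the Nakayama automorphism of $A$, and this is precisely the condition that $\sfw$ be invariant under the cyclic operator $v_1\otimes\cdots\otimes v_\ell\mapsto (Q^tv_\ell)\otimes v_1\otimes\cdots\otimes v_{\ell-1}$, i.e. that $\sfw$ is a $Q$-twisted superpotential. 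Finally, to identify $A$ with $A(\sfw,\ell-m)$ I would trace the self-dual differentials: the relation space $R=W_2$ is carried by the duality onto $W_{d-2}$, and unwinding the multiplication maps defining the differentials expresses $W_2$ as the space of order-$(\ell-m)$ contractions of $\sfw$ against $(V^*)^{\otimes(\ell-m)}$, that is, $R=\partial^{\ell-m}(\Bbbk\sfw)$.

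I expect the genuine obstacle to be this last identification: establishing the precise self-dual (bimodule) structure of Berger's generalized Koszul complex and translating ``the transpose of the differentials'' into ``partial derivatives of $\sfw$'', all while correctly bookkeeping the twist $Q$ and its relation to the Nakayama automorphism. The numerical relations and the existence of the line $\Bbbk\sfw$ fall out rather formally once the resolution and its Gorenstein self-duality are in hand; verifying that the relations are exactly the derivatives of a single tensor, with the cyclic invariance governed by the correct $Q$, is the technical heart, and is what \cite[Thm.~11]{dv2} and \cite[Thm.~6.8]{bsw} carry out in detail.
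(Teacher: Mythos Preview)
The paper does not give its own proof of this theorem: it is stated with attribution to Dubois-Violette \cite[Thm.~11]{dv2} and Bocklandt--Schedler--Wemyss \cite[Thm.~6.8]{bsw} and then used as a black box. There is therefore nothing in the paper to compare your proposal against.

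That said, your sketch is an accurate outline of how the cited proofs proceed. The $m$-Koszul resolution with terms $A\otimes W_i$ concentrated in degrees $\delta(i)$, the Gorenstein self-duality forcing $\dim W_d=1$ and $\delta(i)+\delta(d-i)=\ell$, the parity argument for $d$ when $m\ge 3$, and the identification of $\Bbbk\sfw$ with $W_d$ are all correct and match the literature. You are also right that the substantive work lies in showing $R=\partial^{\ell-m}(\Bbbk\sfw)$ with the correct twisted cyclic invariance; this is exactly what Dubois-Violette's notion of \emph{pre-regularity} encodes, and what \cite[Thm.~11]{dv2} establishes.
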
 

We will use the next lemma in \cref{cor.cy2}.

\begin{lemma}\label{le.hdet}
Let $k \in \ZZ$.
If $\sfw \in V^{\otimes \ell}$ is a $Q$-twisted superpotential, 
then
\begin{equation*}
(Q^k)^{\otimes\ell} {\sf w} \; =\;  {\sf w}.
\end{equation*} 
\end{lemma}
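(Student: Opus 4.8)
The plan is to reduce the assertion to a single identity and then read that identity off from the cyclic symmetry defining a twisted superpotential. Since $(Q^{k})^{\otimes\ell}=(Q^{\otimes\ell})^{k}$ as operators on $V^{\otimes\ell}$, it suffices to prove the case $k=1$, namely $Q^{\otimes\ell}\sfw=\sfw$. The positive powers then follow by iterating $Q^{\otimes\ell}$, the case $k=0$ is trivial, and the negative powers follow from $(Q^{\otimes\ell})^{-1}=(Q^{-1})^{\otimes\ell}$.

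For the case $k=1$ I would iterate the defining symmetry. By definition $\sfw$ is fixed by the operator $\theta\colon V^{\otimes\ell}\to V^{\otimes\ell}$, $v_{1}\otimes\cdots\otimes v_{\ell}\mapsto(Q^{t}v_{\ell})\otimes v_{1}\otimes\cdots\otimes v_{\ell-1}$, which is the cyclic shift followed by an application of the twist in the incoming slot. Applying $\theta$ exactly $\ell$ times sends each tensor slot once around the cycle, so the twist is applied to every factor exactly once; as applications in distinct slots commute, $\theta^{\ell}$ is the $\ell$-fold tensor power of the twist. Because $\theta(\sfw)=\sfw$, this $\ell$-fold power fixes $\sfw$.

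The crux is to check that the operator produced by this iteration is genuinely $Q^{\otimes\ell}$ and that the resulting scalar is exactly $1$; this is the only place the $m$-Koszul twisted Calabi-Yau hypothesis is needed, the iteration itself using only that $\sfw$ is a twisted superpotential. The clean way to do this is to identify the twist with the Nakayama automorphism $\nu$ of $A=A(\sfw,\ell-m)$, whose existence is guaranteed by \Cref{lem.RRZ} and for which $\sfw$ is determined up to a scalar by \Cref{thm.DV}. A graded automorphism of $A$ rescales the superpotential by the inverse of its homological determinant, and $\hdet(\nu)=1$ for $m$-Koszul Artin-Schelter regular algebras by Mori-Smith \cite{MS16}; this pins the scalar to $1$ and yields $\nu^{\otimes\ell}\sfw=\sfw$, hence $Q^{\otimes\ell}\sfw=\sfw$. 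I expect the bookkeeping relating the transpose appearing in the one-step map to the plain $Q$ in the conclusion to be the one subtle point: it is trivial when $Q$ is diagonalizable (work in an eigenbasis, where $Q=Q^{t}$), but in general it requires the identification of the twist with $\nu$.
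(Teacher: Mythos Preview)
Your second approach---identify the twist with the Nakayama automorphism $\nu$, invoke that $\sigma^{\otimes\ell}\sfw=\hdet(\sigma)\sfw$ for graded automorphisms, and use $\hdet(\nu)=1$---is precisely the paper's proof, which simply cites Theorems~1.2, 1.6, and 1.8 of \cite{MS16}. (Minor quibble: the eigenvalue is $\hdet(\sigma)$, not its inverse; see \Cref{le.scale-w}. Since $\hdet(\nu)=1$ this is harmless here.)

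Your first approach, iterating the defining cyclic map $\theta$, is a genuinely different and more elementary argument: it uses nothing about the algebra $A(\sfw,\ell-m)$, only the definition of a twisted superpotential. As you correctly compute, $\theta^{\ell}=(Q^{t})^{\otimes\ell}$, so $\theta(\sfw)=\sfw$ immediately gives $(Q^{t})^{\otimes\ell}\sfw=\sfw$, and hence $((Q^{t})^{k})^{\otimes\ell}\sfw=\sfw$ for all $k\in\bZ$. Note that no scalar needs to be ``pinned to $1$'' here: the iteration gives equality on the nose, so your remark that the tCY hypothesis is needed for the scalar is misplaced. The only genuine gap between what iteration proves and the lemma as stated is the transpose: you get $Q^{t}$ rather than $Q$. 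For the diagonalizable $Q$ that appear in every application of this lemma in the paper (see \Cref{cor.cy2}), that gap vanishes, so your elementary argument actually suffices for all the paper's needs and bypasses the tCY hypothesis entirely. For arbitrary $Q$ the discrepancy is real, and resolving it does require the \cite{MS16} route you outline.
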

\begin{proof}
The $\ell^{\rm th}$ power of the linear operator on $V^{\otimes\ell}$ in the definition of a $Q$-twisted superpotential is $(Q^{-1})^{\otimes\ell}$. Since $\sfw$ is invariant under the operator, the statement follows.
\end{proof}

\begin{remark}
Our $Q$-twisted superpotential is a $(Q^{-1})$-superpotential in the sense of \cite[Defn.~2.5]{MS16} and \cite[Defn.~2.1]{mu-cy}. In \cite[\S 2.2]{bsw}, it was called a twisted superpotential of degree $\ell$ with respect to $(-1)^{\ell-1}Q^{-1}$. Our superpotentials can be related to potentials of a quiver in \cite[\S 3]{dwz} (also to superpotentials in \cite[\S 1]{Boc08}) by considering a quiver $\Gamma$ consisting of one vertex and $\dim V$ loops. The space of potentials of $\Gamma$ concentrated in length $\ell$ modulo cyclic equivalence in the sense of \cite[\S 3]{dwz} has a basis consisting of cycles modulo cyclic permutation. So the space is isomorphic to the space of superpotentials in $V^{\otimes\ell}$ in our sense.
\end{remark}

\subsection{Notation}
\label{subsec.not.cycder}
It will be useful in what follows to use a fixed basis $x_1,\ldots,x_n$ for $V$. The matrix representing the linear transformation $Q$ is also denoted by $Q$. That is, $Q(x_{j})=\sum_{i}Q_{ij}x_{i}$.

For each $i=1,\ldots,n$, let $\partial_i:TV\to TV$ be the
linear map that acts on words ${\sf u}$ in the letters $x_j$ as
follows:
$$
\partial_i{\sf u} \;=\; 
\begin{cases}
\sfv & \text{if $\sfu=x_i\sfv$,}
\\
0 & \text{otherwise.}
\end{cases}
$$
We call $\pd_i\sfu$ the {\sf partial derivative} of $\sfu$ with
respect to $x_i$.  It is easy to see that $A({\sf w},\ell - m)$ is
equal to the tensor algebra $TV$ modulo the ideal generated by
$$\{\partial_{i_1}\ldots \partial_{i_{\ell-m}}(\sfw) \; | \; 1\le i_1,\ldots,i_{\ell-m} \le n\}.
$$

From now on $A$ will denote a 3-dimensional $m$-Koszul Artin-Schelter
regular algebra so, by \cref{thm.DV}, $\ell=m+1$ and there is a twisted superpotential $\sfw\in V^{\otimes (m+1)}$ such that
$$
A  \; = \; A(\sfw) \;=\; \frac{TV}{(\pd_1\sfw, \ldots,\pd_n\sfw)}.
$$
The minimal projective resolution of ${}_A\Bbbk$ is of the form
$0\to A(-\ell) \to A \otimes R \to A \otimes V \to A \to \Bbbk \to 0$
where $R$ is the linear span of a ``minimal'' set of relations. The
Gorenstein condition implies that $\dim(V)=\dim(R)$, so
$\{\pd_1\sfw, \ldots,\pd_n\sfw\}$ is linearly independent in
$TV$. This follows also from the ``pre-regularity'' of $\sfw$ in
\cite[Thm.~11]{dv2}.

If $p \in \Bbbk$ and $x$ and $y$ are elements in a $\Bbbk$-algebra, we use the
notation
\begin{equation*}
  [x,y]_p  \; := \;  xy-pyx. 
\end{equation*}

\subsection{The main theorem}
\label{def.stg}

Let $ \sfw \in V^{\otimes (m+1)}$ be a $Q$-twisted superpotential and
let $\sfp=(p_1,\ldots, p_{n})$ be a tuple of non-zero scalars.  The
algebra $D(\sfw,\sfp)$ is the tensor algebra $TV$ modulo the relations
\begin{equation}
  \label{eq:d-rels}
  \partial_i{\sfw} \; = \; [x_i,\partial_1{\sfw}]_{p_i} \;=\; 0  \qquad \text{ for }i=2,\ldots,n.
\end{equation}
Thus $D(\sfw,\sfp)$ has $n-1$ relations in degree $m$ and $n-1$
relations of degree $m+1$.  The scalar $p_{1}$ is not used in the
relation and will later be fixed to be a particular entry in $Q$.
 
We will frequently write $A$ for $A({\sfw})$ and $D(\sfp)$, or just
$D$, for $D(\sfw,\sfp)$.

We will write $\Omega$ for $\pd_1\sfw$ viewed as an element in
$D(\sfw,\sfp)$.  Clearly, $A(\sfw) = D(\sfw,\sfp)/(\Omega)$.

The following notation will be used in what follows. First, we write
$$
\sff  \; =\; \begin{pmatrix} f_1 \\ \vdots \\  f_n \end{pmatrix} \; =\; \begin{pmatrix} \pd_1\sfw \\ \vdots \\ \pd_n \sfw \end{pmatrix}
\qquad \text{and} \qquad 
\sfx \; =\; \begin{pmatrix} x_1 \\ \vdots \\  x_n \end{pmatrix}.
$$
Since $\sfw \in V^{\otimes(m+1)}$, there is a unique $n \times n$ matrix $\sM$ with entries in $V^{\otimes(m-1)}$ such that 
$$
\sfw \;=\; \sfx^t \sfM \sfx.
$$
It follows from the definition of the partial derivatives $\pd_j\sfw$ that 
\begin{equation*}
  \sfM{\sf x} \; =\;  {\sff}.
\end{equation*}
Because $\sfw$ is a $Q$-twisted superpotential,
\begin{equation}\label{eq.tw.sp}
{\sf x}^t \sfM \; =\;  (Q{\sff})^t
\end{equation}
where $Q$ is regarded as a matrix as explained in \cref{subsec.not.cycder}, and the products are matrix products whose entries are in $TV$. The equation \cref{eq.tw.sp} is written in \cite[(2.1)]{AS87} and the equivalence to the definition of a superpotential follows from \cite[(2.12)]{AS87}. Note that they used the dual space $V^{*}$ in the place of our $V$ and hence the matrices representing linear transformations on $V$ are transposed from ours.

\begin{lemma}\label{le.stg-cntr}
If $Q$ is diagonal with entries $q_1,\ldots,q_n$ with respect
  to the basis $x_1,\ldots,x_n$ for $V$, then $\Omega$ is a normal
  element and $A({\sfw})=D(\sfw,\sfp)/(\Omega)$.
\end{lemma}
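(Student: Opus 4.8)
The plan is to check the normalizing condition one generator at a time. For $i\ge 2$ the defining relation $[x_i,\pd_1\sfw]_{p_i}=0$, i.e.\ $x_i\Omega=p_i\Omega x_i$, already exhibits a normalizing relation, so the entire content is to produce an analogous relation for the single generator $x_1$, whose commutator with $\Omega$ is the one relation omitted from the presentation of $D=D(\sfw,\sfp)$.

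First I would transport the two superpotential identities $\sfM\sfx=\sff$ and $\sfx^t\sfM=(Q\sff)^t$ from $TV$ to $D$ along the quotient map. Since $Q=\diag(q_1,\ldots,q_n)$, the second reads $\sum_i x_i M_{ij}=q_j f_j$. In $D$ the relations $\pd_i\sfw=0$ ($i\ge 2$) say precisely that $f_i=0$ for $i\ge 2$, while $f_1=\Omega$; thus the image of $\sff$ in $D$ is $(\Omega,0,\ldots,0)^t$. The key step is then to evaluate the image of $\sfw=\sfx^t\sfM\sfx$ in $D$ in the two possible ways. Associating to the right, $\sfx^t(\sfM\sfx)=\sfx^t\sff=\sum_i x_i f_i=x_1\Omega$; associating to the left, $(\sfx^t\sfM)\sfx=(Q\sff)^t\sfx=\sum_j q_j f_j x_j=q_1\Omega x_1$. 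Comparing, one obtains the missing relation
\[
x_1\Omega=q_1\,\Omega x_1 \qquad\text{in }D.
\]

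With this in hand every generator satisfies $x_i\Omega=c_i\Omega x_i$ for a nonzero scalar $c_i$ (with $c_1=q_1$ and $c_i=p_i$ for $i\ge 2$), so setting $\tau(x_i):=c_i^{-1}x_i$ gives a graded automorphism of $D$ with $\Omega x=\tau(x)\Omega$ for all $x\in D_1$, and hence $\Omega D=D\Omega$ by multiplicativity; this is the normality of $\Omega$. The identification $A(\sfw)=D/(\Omega)$ is then formal: modulo $\Omega$ the relations $[x_i,\pd_1\sfw]_{p_i}$ collapse, leaving $TV/(\pd_1\sfw,\ldots,\pd_n\sfw)=A(\sfw)$. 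There is no serious obstacle here; the only delicate point is the double evaluation of $\sfw$, and it is exactly the diagonality of $Q$ that makes the left-associated product collapse to a scalar multiple of $\Omega x_1$ rather than an unmanageable combination --- which is where the hypothesis enters.
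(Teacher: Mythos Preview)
Your proof is correct and takes essentially the same approach as the paper: both reduce to the single missing relation for $x_1$ and obtain it by evaluating $\sfw=\sfx^t\sfM\sfx$ in two ways using $\sfM\sfx=\sff$ and $\sfx^t\sfM=(Q\sff)^t$, then passing to $D$ where $f_i=0$ for $i\ge 2$. One small caution: asserting that $\tau$ is an automorphism of $D$ is slightly premature at this stage (that would require either regularity of $\Omega$ or a check that $\tau$ preserves the defining ideal), but you do not actually need it---the scalar relations $x_i\Omega=c_i\Omega x_i$ already give $D\Omega=\Omega D$ directly, which is all that normality requires.
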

\begin{proof}
Since $[x_j,\partial_1{\sfw}]_{p_j}=0$ in $D(\sfw,\sfp)$, 
to prove $\Omega$ is normal it only remains to show that $x_1\Omega$ is a non-zero scalar multiple of $\Omega x_1$.
It follows from the  remarks prior to this lemma that  there are equalities
\begin{equation*}
\sum_{i=1}^n x_i f_i \;=\;  {\sfx}^t \sff \;=\;  {\sfx}^t  \sfM \sfx \;=\;   (Q{\sff})^t \sfx  \;=\;  \sff^t Q^t{\sfx} \;=\;  \sff^t Q{\sfx} \;=\; \sum_{i=1}^n q_i f_i x_i,
\end{equation*}
in $TV$.   But $f_i=0$ in $D(\sfw,\sfp)$ for all $i \ge 2$, so $x_1\Omega = q_1 \Omega x_1$. 
Thus, $\Omega$ is normal as claimed.
\end{proof}

Now we state our main theorem.

\begin{theorem}\label{th.3-stg}
  Let $A({\sfw})$ be an $m$-Koszul 3-dimensional Artin-Schelter regular algebra generated by $x_1,\ldots,x_n$.
  Let $\sfp=(p_1,\ldots,p_{n})$ be a tuple of non-zero scalars  and define $D(\sfw,\sfp)$ as above.
If $Q$ is a diagonal matrix $\diag(q_{1},\ldots,q_{n})$ with respect to   $x_1,\ldots,x_n$ and $p_1=q_1$, then
the following conditions are equivalent:
 \begin{enumerate}
  \item\label{item.reg} $D(\sfw,\sfp)$ is a 4-dimensional Artin-Schelter regular algebra;
  \item\label{eq:sk}
    $[\Omega,\sfM_{ij}]_{q_1^{-1}p_ip_j}=0$ in $D(\sfw,\sfp)$ for all
    $1\le i,j\le n$;
  \item\label{item.param}
    for all $1\leq i,j\leq d$ and all words
    $x_{l_{1}}\ldots x_{l_{m-1}}$ appearing in $\sfM_{ij}$,
    $q_{1}=p_{i}p_{j}p_{l_{1}}\ldots p_{l_{m-1}}$;
  \item\label{item.eigenvec} the automorphism $\varphi\colon V\to V$ defined by $\varphi(x_{i})=p_{i}x_{i}$ has the property
  \begin{equation*}
    \varphi^{\otimes(m+1)}(\sfw)=q_{1}\sfw.
  \end{equation*}
\end{enumerate}
Furthermore, if one of these conditions holds, then $\Omega$ is a regular element of $D(\sfw,\sfp)$.
\end{theorem}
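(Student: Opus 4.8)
The plan is to reduce regularity of $\Omega$ to a Hilbert-series identity and then invoke \Cref{pr.enh}. That proposition tells us $\Omega$ is regular if and only if $h_{D}(t)=h_{A}(t)(1-t^{m})^{-1}$, where $D=D(\sfw,\sfp)$ and $A=A(\sfw)$. Since $\Omega$ is a normal element of degree $m$ with $A=D/(\Omega)$ by \Cref{le.stg-cntr}, the right-exact sequence $D(-m)\xrightarrow{\cdot\,\Omega}D\to A\to 0$ is always available, and counting dimensions in each degree yields the coefficientwise inequality $h_{D}(t)\le h_{A}(t)(1-t^{m})^{-1}$, with equality in every degree precisely when multiplication by $\Omega$ is injective. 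So the entire statement reduces to the reverse inequality $h_{D}(t)\ge h_{A}(t)(1-t^{m})^{-1}$, and it is here that one of the good conditions must be used.

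To produce the reverse inequality I would exhibit an explicit free resolution of ${}_{D}\Bbbk$ of the expected self-dual shape, built by lifting the $m$-Koszul resolution of ${}_{A}\Bbbk$ recalled in \S\ref{subsec.not.cycder} along the normal element $\Omega$. Concretely, the candidate complex is
\[
0\to D(-2m-1)\to D\otimes V^{*}\to D\otimes R\to D\otimes V\to D\to\Bbbk\to 0,
\]
where $V^{*}$ is placed in degree $2m$ and $R$ denotes the relation space of \cref{eq:d-rels}, of dimension $n-1$ in degree $m$ and $n-1$ in degree $m+1$; the Gorenstein parameter $\ell_{D}=2m+1$ is forced by self-duality, which must interchange the degree-$m$ and degree-$(m+1)$ parts of $R$. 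The role of the good conditions is exactly to make this a complex: the commutation relation $\Omega\sfM_{ij}=q_{1}^{-1}p_{i}p_{j}\sfM_{ij}\Omega$ of \cref{eq:sk} (equivalently \cref{item.param} or \cref{item.eigenvec}) is what forces the lifted differentials to compose to zero. Granting exactness, the Euler characteristic of the complex, together with $h_{A}(t)^{-1}=1-nt+nt^{m}-t^{m+1}$, gives
\[
h_{D}(t)^{-1}=1-nt+(n-1)t^{m}+(n-1)t^{m+1}-nt^{2m}+t^{2m+1}=(1-t^{m})\,h_{A}(t)^{-1},
\]
so that $h_{D}(t)=h_{A}(t)(1-t^{m})^{-1}$; by \Cref{pr.enh} this simultaneously shows that $\Omega$ is regular and recovers condition \cref{item.reg}.

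The main obstacle is precisely the exactness of this candidate complex, equivalently the lower bound $h_{D}(t)\ge h_{A}(t)(1-t^{m})^{-1}$. Once the differentials are known to square to zero, exactness is the assertion that the lifted syzygies of $A$ remain independent after adjoining the extra degree-$(m+1)$ relations, and this is where the numerical hypotheses on $\sfp$ do genuine work rather than merely bookkeeping. The difficulty is sharpened by the fact, noted in the introduction, that these algebras need not possess a PBW basis, so exactness cannot be verified by a diamond-lemma or Gr\"obner-basis argument and must instead be established homologically, by analysing the lifted complex directly.
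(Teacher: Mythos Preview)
Your proposal correctly identifies the architecture the paper uses for $\cref{eq:sk}\Rightarrow\cref{item.reg}$: build an explicit length-four complex of free $D$-modules whose middle differentials encode the relations, check that condition \cref{eq:sk} is exactly what makes it a complex, and then extract the Hilbert-series identity $h_D(t)=h_A(t)(1-t^m)^{-1}$, whence regularity of $\Omega$ and AS-regularity of $D$ via \Cref{pr.enh}. The paper carries out precisely this, with the differentials $\sfM_l$, $\sfM_r$ made explicit (\Cref{le.mlr}) and the composition $\sfM_l\sfM_r$ computed entrywise (\Cref{le.cplx}).

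There are, however, two genuine gaps.

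\textbf{The reverse implication is missing.} You treat only $\cref{eq:sk}\Rightarrow\cref{item.reg}$, but the theorem asserts equivalence. The paper proves $\cref{item.reg}\Rightarrow\cref{item.eigenvec}$ by a separate argument: first one checks that the $2(n-1)$ relations \cref{eq:d-rels} are minimal (using $(V\otimes R)\cap(R\otimes V)=\Bbbk\sfw$), which forces the shape of the minimal free resolution of ${}_D\Bbbk$ and hence the Hilbert series $h_D(t)=h_A(t)(1-t^m)^{-1}$; then $\Omega$ is regular by \Cref{pr.enh}, so the automorphism $\tau$ of $D$ with $\Omega x=\tau(x)\Omega$ exists, fixes $\Omega$, and descends to an automorphism of $A$; finally one argues (via \cite[Thm.~1.1]{MS16}) that $\sfw$ is a $\tau$-eigenvector and computes the eigenvalue. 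None of this appears in your proposal.

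\textbf{No mechanism is given for the key step.} You are candid that exactness of the candidate complex is the obstacle, but you offer no idea for overcoming it. The paper does \emph{not} establish exactness directly. Instead it runs a simultaneous induction on the degree $k$ showing that the defect $e_k:=d'_k-d_k$ (where $d'_k$ is the coefficient of $t^k$ in $h_A(t)(1-t^m)^{-1}$) and the annihilator dimension $z_k:=\dim\{a\in D_k: a\Omega=0\}$ both vanish. The heart of the argument (\Cref{le.zqq}) bounds the homology $(H_3)_k$ at the third spot by reducing a putative cycle $\varphi^t$ modulo $\Omega$, invoking exactness of the known $A$-resolution \cref{eq:res-a} to write $\varphi^t=u\sfx^t+\sfy^t\Omega$, and then using $[\Omega,\sfM_{ij}]_{q_1^{-1}p_ip_j}=0$ to commute $\Omega$ past $\sfM_l$ so that $\sfy^t$ drops to a lower-degree instance of the same problem. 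This feeds into the Euler-characteristic identity \cref{eq:e} linking $e_k$, the $(h_i)_k$, and (via \cref{eq:ez}) $z_k$, closing the induction (\Cref{le.ez0}). Without this bootstrap, or some substitute, the lower bound $h_D(t)\ge h_A(t)(1-t^m)^{-1}$ is unproved and your argument remains a plan rather than a proof.

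A minor correction: exactness of the complex and the Hilbert-series lower bound are not equivalent. Exactness gives the identity via the Euler characteristic, but the identity does not force exactness of \emph{this} complex. Fortunately only the identity is needed, and that is what the paper's inductive argument delivers.
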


We will prove the theorem in \cref{sec.proof}.

\subsection{Remarks}
\label{re.gen}
(1) We say the tuple $\sfp$ is {\sf good} if it satisfies one of the
conditions \cref{eq:sk}-\cref{item.eigenvec} in \Cref{th.3-stg}.

(2) \Cref{th.3-stg} can be generalized and extended in a number of
ways that we will take for granted below.  For example, instead of
replacing the relation $\pd_1\sfw=0$ we could replace the relation
$\partial_k{\sfw}=0$ for {\it any} $k \in \{1,\ldots,n\}$; the proof
proceeds just as for $k=1$ after making the obvious changes to the
indexing.  Furthermore, it is not necessary that $Q$ be diagonal; as
we note below in \Cref{re.blk}, it is enough to assume that the entry
$q_k$ relevant to the relation $\partial_k{\sfw}$ being omitted splits
off as a block diagonal entry.

(3)
If one of the conditions in \Cref{th.3-stg} holds, then $D(\sfw,\sfp)$ is a domain if $A(\sfw)$ is. The argument is well-known.
Let $R$ be an $\NN$-graded ring and $\Omega$ a homogeneous regular normal element of positive degree. Suppose $R/(\Omega)$ is 
a domain. If $R$ is not a domain there are non-zero homogeneous elements $a$ and $b$ in 
$R$ such that $ab=0$; among all such pairs $(a,b)$ choose one such that $\deg(a)+\deg(b)$ is as small as possible;\
since  $R/(\Omega)$ is a domain either $a$ or $b$ is a multiple of $\Omega$; without loss of generality, assume $a=\Omega a'$; 
hence $\Omega a'b=0$; since $\Omega$ is regular, $a'b=0$; but   $\deg(a')+\deg(b) < \deg(a)+\deg(b)$ which contradicts the choice of $a$ and $b$ so we conclude that $R$ is, in fact, a domain.


\subsection{Consequences of the main theorem}

\Cref{th.3-stg} is particularly useful when $Q=\id_V$, i.e., when $\sfw$ is
a superpotential. In our setting, it follows from \cite[Cor.~4.5]{MS16} that $\sfw$
is a superpotential if and only if $A(\sfw)$ is a $3$-Calabi-Yau algebra.
In that case, $\sfp=(1,\ldots,1)$ is good because it
obviously satisfies condition (3), and equally obviously satisfies
condition (4), in \cref{th.3-stg}. We record this observation since we
will use it later.

\begin{corollary}\label{cor.cy}
  Let $A=A({\sfw})$ be a $3$-Calabi-Yau algebra generated by $x_1,\ldots,x_n$.
 Then the algebra $D$ generated by $x_1,\ldots,x_n$ subject to  relations
  \begin{equation*}
    \pd_i\sfw \;=\;  [x_i,\partial_1{\sfw}] \;=\; 0 \qquad (i=2,\ldots,n)
  \end{equation*}
  is a $4$-dimensional Artin-Schelter regular algebra and is a central
  extension of $A$ by $\Omega=\partial_1{\sfw}$.
\end{corollary}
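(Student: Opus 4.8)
The plan is to obtain this as a direct specialization of \Cref{th.3-stg} to the tuple $\sfp=(1,\ldots,1)$, the only extra ingredient being the promotion of the normal element $\Omega$ to a central one.

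First I would pin down the value of $Q$. Since $A(\sfw)$ is $3$-Calabi-Yau, \cite[Cor.~4.5]{MS16} guarantees that $\sfw$ is an untwisted superpotential, so we may take $Q=\id_V=\diag(1,\ldots,1)$ and thus $q_1=\cdots=q_n=1$. Setting $p_i=1$ for every $i$, the commutators $[x_i,\partial_1\sfw]$ appearing in the statement are exactly the relations $[x_i,\partial_1\sfw]_{p_i}$ of \cref{eq:d-rels}, so the algebra $D$ of the corollary is precisely $D(\sfw,\sfp)$ for $\sfp=(1,\ldots,1)$. The standing hypotheses of \Cref{th.3-stg} then hold: $Q$ is diagonal and $p_1=1=q_1$.

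Next I would verify that $\sfp=(1,\ldots,1)$ is good. This is immediate from condition \cref{item.eigenvec}, since the automorphism $\varphi$ with $\varphi(x_i)=p_ix_i$ is the identity, whence $\varphi^{\otimes(m+1)}(\sfw)=\sfw=q_1\sfw$; equivalently, each equality in \cref{item.param} reduces to $1=1$. \Cref{th.3-stg} therefore delivers condition \cref{item.reg}, namely that $D$ is a $4$-dimensional Artin-Schelter regular algebra, together with the regularity of $\Omega=\partial_1\sfw$ supplied by the final clause of that theorem.

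Finally I would upgrade normality to centrality. By \Cref{le.stg-cntr}, $\Omega$ is normal and the computation in its proof specializes to $x_1\Omega=q_1\Omega x_1=\Omega x_1$; meanwhile the defining relations $[x_i,\partial_1\sfw]=0$ say exactly that $x_i\Omega=\Omega x_i$ for $i=2,\ldots,n$. As $x_1,\ldots,x_n$ generate $D$ as a $\Bbbk$-algebra, $\Omega$ commutes with all of $D$ and so is central. Together with $A(\sfw)=D/(\Omega)$ from \Cref{le.stg-cntr}, this exhibits $D$ as a central extension of $A$ by $\Omega$. I do not anticipate any genuine obstacle here, since the substantive work is carried out in \Cref{th.3-stg}; the only point requiring care is the bookkeeping that, under the specialization $p_i=1$, the normality relation $x_1\Omega=q_1\Omega x_1$ becomes an honest commutation while the remaining generators already commute with $\Omega$ by construction.
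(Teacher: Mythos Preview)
Your proposal is correct and follows the same approach as the paper: specialize \Cref{th.3-stg} with $Q=\id_V$ (via \cite[Cor.~4.5]{MS16}) and $\sfp=(1,\ldots,1)$, noting that goodness is trivially satisfied by condition \cref{item.eigenvec} (or \cref{item.param}). Your explicit verification that $\Omega$ is central---combining $x_1\Omega=q_1\Omega x_1=\Omega x_1$ from the proof of \Cref{le.stg-cntr} with the defining relations $[x_i,\Omega]=0$ for $i\ge 2$---spells out a point the paper leaves implicit, but this is the intended argument.
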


There is a more general version of  \Cref{cor.cy}.

\begin{corollary}\label{cor.alt-cy}\label{cor.cy2}
  Let $k\in \bZ$.  Let $A({\sfw})$ be $3$-dimensional Artin-Schelter
  regular (= twisted Calabi-Yau) algebra generated by
  $x_1,\ldots,x_n$.  If $Q=\diag(1,q_{2},\ldots,q_{n})$ with respect
  to the basis $x_1,\ldots,x_n$, then the algebra $D$ generated by
  $x_1,\ldots,x_n$ subject to the relations
  \begin{equation*}
    \pd_i\sfw \;=\;  [x_i,\partial_1{\sfw}]_{q_i^k} \;=\; 0 \qquad (i=2,\ldots,n)
  \end{equation*}
  is a $4$-dimensional Artin-Schelter regular algebra and is a normal
  extension of $A(\sfw)$ by $\Omega=\partial_1{\sfw}$.
\end{corollary}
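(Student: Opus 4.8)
The plan is to realize $D$ as the algebra $D(\sfw,\sfp)$ of \Cref{th.3-stg} for a suitable good tuple $\sfp$, and then read off the conclusion from that theorem together with \Cref{le.stg-cntr}. Since $Q=\diag(1,q_2,\ldots,q_n)$ has $q_1=1$, I would take $\sfp=(q_1^k,q_2^k,\ldots,q_n^k)$; then $p_1=q_1^k=1=q_1$, which matches the standing hypothesis $p_1=q_1$ of \Cref{th.3-stg}, and the relations $[x_i,\partial_1\sfw]_{p_i}=0$ defining $D(\sfw,\sfp)$ coincide with the relations $[x_i,\partial_1\sfw]_{q_i^k}=0$ of $D$. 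Thus $D=D(\sfw,\sfp)$ on the nose.

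The key step is to check that this $\sfp$ is good, which I would do via condition \cref{item.eigenvec}. The automorphism $\varphi\colon V\to V$ with $\varphi(x_i)=p_i x_i=q_i^k x_i$ is exactly $Q^k$, so \cref{item.eigenvec} asks that $\varphi^{\otimes(m+1)}(\sfw)=(Q^k)^{\otimes(m+1)}(\sfw)=q_1\sfw=\sfw$. This is precisely the conclusion of \Cref{le.hdet}: the algebra $A(\sfw)=A(\sfw,\ell-m)$ is $m$-Koszul (by the discussion of \S\ref{ssect.mKoszul}) and twisted Calabi-Yau (being Artin-Schelter regular, by \Cref{lem.RRZ}), while $\ell=m+1$ by \cref{thm.DV}, so that lemma gives $(Q^k)^{\otimes(m+1)}\sfw=\sfw$. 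Hence $\sfp$ is good.

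With goodness in hand, \Cref{th.3-stg} yields at once that $D=D(\sfw,\sfp)$ is $4$-dimensional Artin-Schelter regular and that $\Omega=\partial_1\sfw$ is regular, while \Cref{le.stg-cntr} (applicable since $Q$ is diagonal) shows $\Omega$ is normal and $A(\sfw)=D/(\Omega)$. Together these exhibit $D$ as a normal extension of $A(\sfw)$ by $\Omega$. I expect no genuine obstacle, since all the substance is already contained in \Cref{th.3-stg}, \Cref{le.stg-cntr}, and \Cref{le.hdet}; the only point demanding care is the identification of $\varphi$ with $Q^k$, which is exactly what allows \Cref{le.hdet} to supply the eigenvector condition \cref{item.eigenvec}, and is where the exponent $k$ and the Calabi-Yau hypothesis actually enter.
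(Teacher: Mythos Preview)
Your proposal is correct and follows essentially the same route as the paper's proof: set $\sfp=(1,q_2^k,\ldots,q_n^k)$, observe that the automorphism $\varphi$ in condition~\cref{item.eigenvec} is exactly $Q^k$, and invoke \Cref{le.hdet} to conclude $(Q^k)^{\otimes(m+1)}\sfw=\sfw=q_1\sfw$. The paper phrases the appeal to \Cref{le.hdet} in terms of monomials, but this is just the expanded form of the eigenvector identity you use; your version is arguably cleaner, and your explicit mention of \Cref{le.stg-cntr} and the regularity clause of \Cref{th.3-stg} for the ``normal extension'' conclusion simply spells out what the paper leaves implicit.
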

\begin{proof}
  It suffices to show that $\sfp=(1,q_2^k,\ldots,q_n^k)$ is good.  By
  \Cref{le.hdet}, for every monomial $x_{l_{1}}\ldots x_{l_{m+1}}$
  appearing in ${\sf w}$ the product $\prod_{i=1}^{m+1}q_{l_i}^k$
  equals $1$.  Hence $\sfp$ satisfies the criterion for goodness in
  \cref{th.3-stg}\cref{item.eigenvec}.
\end{proof}

In \cref{cor.d-cy} we will show that the algebra $D$ in \cref{cor.cy}
is $4$-Calabi-Yau. The next result shows that every $4$-Calabi-Yau
central extension of $A(\sfw)$ can be obtained in this way.

\begin{theorem}\label{th.allcyext}
  Let $A(\sfw)$ be a $m$-Koszul $3$-Calabi-Yau algebra. Then 
  \emph{all} central extensions of
  $A(\sfw)$ by a degree-$m$ element can be obtained by the
  construction in \cref{cor.cy}. Consequently, they are $4$-Calabi-Yau.
\end{theorem}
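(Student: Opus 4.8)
The plan is to pin down the degree-$m$ defining relations of an arbitrary such central extension $D$ and show that, after a change of basis of $V$, they coincide with those of the algebra produced by \cref{cor.cy}. Write $A=A(\sfw)=TV/(\partial_1\sfw,\ldots,\partial_n\sfw)$, and let $\Omega\in D_m$ be the central regular element with $A=D/(\Omega)$. Since $\deg\Omega=m\ge 2$ and every algebra here is generated in degree one, $D_1=A_1=V$. Choosing a lift $\tilde\Omega\in V^{\otimes m}$ of $\Omega$, the relation ideals $I,J\subseteq TV$ of $D$ and $A$ satisfy $J=I+(\tilde\Omega)$. Comparing degree-$m$ parts, and using that the two-sided ideal generated by the single degree-$m$ element $\tilde\Omega$ meets $V^{\otimes m}$ in exactly $\Bbbk\tilde\Omega$, I obtain $R=R_D+\Bbbk\tilde\Omega$, where $R:=J_m=\langle\partial_1\sfw,\ldots,\partial_n\sfw\rangle$ is the $n$-dimensional space of degree-$m$ relations of $A$ (the $\partial_i\sfw$ being linearly independent by the pre-regularity of $\sfw$) and $R_D:=I_m$ is that of $D$. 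Regularity of $\Omega$ gives $\tilde\Omega\notin I$, hence $\tilde\Omega\notin R_D$; so $R_D\subseteq R$ is a hyperplane and $\tilde\Omega\in R\setminus R_D$.

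The next step is to recognize this hyperplane geometrically. The pairing $\langle-,-\rangle\colon V^*\times V^{\otimes(m+1)}\to V^{\otimes m}$ induces a linear isomorphism $V^*\isoto R$, $\psi\mapsto\langle\psi,\sfw\rangle$, onto $R$ by definition and injective because the $\partial_i\sfw=\langle x_i^*,\sfw\rangle$ are independent. Transporting $R_D$ back through this isomorphism yields a hyperplane of $V^*$, and every hyperplane of $V^*$ is of the form $x^\perp=\{\psi:\psi(x)=0\}$ for a nonzero $x\in V$. Thus $R_D=\{\langle\psi,\sfw\rangle:\psi\in x^\perp\}$, and the condition $\tilde\Omega\notin R_D$ reads $\tilde\Omega=\langle\psi_0,\sfw\rangle$ for some $\psi_0$ with $\psi_0(x)\ne 0$. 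I would then pick a basis $x_1,\ldots,x_n$ of $V$ with $x_1=x$; in this basis $x^\perp=\langle x_2^*,\ldots,x_n^*\rangle$, so $R_D=\langle\partial_2\sfw,\ldots,\partial_n\sfw\rangle$, while modulo $R_D$ one has $\Omega\equiv\psi_0(x_1)\,\partial_1\sfw$ with $\psi_0(x_1)\ne 0$. After rescaling I may assume $\Omega=\partial_1\sfw$ in $D$.

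With these normalizations the comparison is immediate. Let $D'$ be the algebra of \cref{cor.cy} for this basis and index $k=1$, i.e. $D'=TV/(\partial_i\sfw,\ [x_i,\partial_1\sfw]:i=2,\ldots,n)$. All defining relations of $D'$ hold in $D$: the relations $\partial_i\sfw=0$ ($i\ge 2$) lie in $R_D=I_m$, and $[x_i,\partial_1\sfw]=[x_i,\Omega]=0$ since $\Omega$ is central. This gives a graded surjection $D'\twoheadrightarrow D$. Both $D'$ and $D$ are central, hence normal, extensions of $A$ by a degree-$m$ regular element, so by the equivalence \cref{item.enh.elm}$\Leftrightarrow$\cref{item.enh.h} of \cref{pr.enh} they share the Hilbert series $h_A(t)(1-t^m)^{-1}$. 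A degreewise surjection between connected graded algebras with equal finite Hilbert series is an isomorphism, so $D\cong D'$, as desired.

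The main obstacle is the middle step: identifying the extra degree-$m$ relation datum of $D$ with a single point $x\in\PP(V)$. This is exactly where the hypothesis $\deg\Omega=m$ is essential—it forces $\tilde\Omega$ into the relation space $R$ of $A$—and where the duality $V^*\cong R$ does the real work; the Hilbert-series bookkeeping and the final rigidity argument are routine given \cref{pr.enh} and \cref{cor.cy}. I note that the $4$-Calabi-Yau hypothesis on $D$ is not actually used: the argument shows that \emph{any} degree-$m$ central extension of $A(\sfw)$ (generated in degree one) arises from \cref{cor.cy}, and its $4$-Calabi-Yau property is then a consequence of that construction.
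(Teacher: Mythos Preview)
Your proof is correct and follows essentially the same strategy as the paper's: identify the degree-$m$ relation space of $D$ as a hyperplane in $R=\operatorname{span}\{\partial_i\sfw\}$, change basis so that this hyperplane is $\langle\partial_2\sfw,\ldots,\partial_n\sfw\rangle$, and finish with the Hilbert-series comparison via \cref{pr.enh}. The only cosmetic difference is that you phrase the basis change through the duality $V^*\isoto R$, $\psi\mapsto\langle\psi,\sfw\rangle$, whereas the paper writes down an explicit transition matrix $P\in\GL(n)$; your closing observation that the $4$-Calabi-Yau hypothesis on $D$ is never used is a valid sharpening that the paper's argument also supports but does not state.
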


\begin{proof}
  Let $D'$ be a central extension of $A(\sfw)$. Explicitly, let
  $\Omega\in D'$ be a degree-$m$ central regular element such that
  $D'/(\Omega)\cong A(\sfw)$.  Note that $D'_{1}=A_{1}=V$. Since the
  relations for $A$ are concentrated in degree $m$ and span an
  $n$-dimensional subspace of $TV$ (see \cref{subsec.not.cycder}),
  there exist degree-$m$ elements $f_{1},\ldots,f_{n}$ of $TV$ such
  that $A=TV/(f_{1},\ldots,f_{n})$, and $f_{2}=\cdots=f_{n}=0$ in $D'$,
  and $\Omega$ is the image of $f_{1}$ in $D'$.

  Let $y_{1},\ldots,y_{n}$ be a basis of $V$ and write
  $\sfw=y_{1}h_{1}+\cdots+y_{n}h_{n}$. Since $A(\sfw)$ is defined by
  the relations $h_{1},\ldots,h_{n}$, there is a matrix $P\in\GL(n)$ such
  that
  $(h_{1},\ldots,h_{n})^{t}=P(f_{1},\ldots,f_{n})^{t}$. Define
  the basis $x_{1},\ldots,x_{n}$ of $V$ by
  $(x_{1},\ldots,x_{n})=(y_{1},\ldots,y_{n})P$. Then
\begin{equation*}
	\sfw=(y_{1},\ldots,y_{n})\begin{pmatrix}h_{1} \\ \vdots \\ h_{n}\end{pmatrix}=(y_{1},\ldots,y_{n})P\begin{pmatrix}f_{1} \\ \vdots \\ f_{n}\end{pmatrix}=(x_{1},\ldots,x_{n})\begin{pmatrix}f_{1} \\ \vdots \\ f_{n}\end{pmatrix}.
\end{equation*}
Hence $f_{i}=\pd_{i}\sfw$ where the partial derivatives are taken with
respect to the basis $x_{1},\ldots,x_{n}$. Since $\Omega\in D'$ is
central, the defining relations for the algebra $D$ in \cref{cor.cy}
are satisfied by $D'$. Hence there is a surjective homomorphism
$D \to D'$. But $D$ and $D'$ have the same Hilbert series because $A$
is a quotient of each of them by a central regular element of degree
$m$. Thus $D'=D$.
\end{proof}

\subsection{A flat family}\label{subse.fam}
The other consequence of  \Cref{th.3-stg}  we wish to highlight is \cref{thm2} in the introduction. 
That result says that if $A(\sfw)$ is a 3-Calabi-Yau algebra on $n$ generators, then the recipe in \Cref{cor.cy} 
produces a flat family over  $\PP^{n-1}$  of 4-Calabi-Yau algebras $D_x$, $x \in V-\{0\}$,  that are central extensions of $A(\sfw)$.

In the statement of \cref{thm2}, we defined $D_x:=TV/I_x$ where $I_x$
is the ideal generated by
\begin{equation}
  \label{eq:rel1}
\langle x^\perp,\sfw \rangle \;=\; \{\langle \psi,\sfw\rangle \; | \; \psi \in x^\perp\}  
\end{equation}
and
\begin{equation}
  \label{eq:rel2}
[V,\langle V^*,\sfw \rangle]  \;=\; \{[y,\langle \psi,\sfw\rangle] \; | \; y \in V, \, \psi \in V^*\}.
\end{equation}
If $\l \in \Bbbk-\{0\}$, then $D_{\l x}=D_x$ so we will often write
$x$ for the point in $\PP(V)$ that corresponds to the line $\Bbbk x$ in
$\PP(V)$. Thus, we have an algebra $D_x$ for each $\Bbbk$-point
$x \in \PP(V)$.

Before showing that the $D_x$'s provide a flat family we check they have the other properties we want.

\begin{lemma}
 Let $A({\sfw})$ be a $3$-Calabi-Yau algebra generated by $x_1,\ldots,x_n$.
\begin{enumerate}
  \item\label{item.def.D} 
  The algebra $D_{x_1}$ is the same as the algebra $D$ constructed in  \Cref{cor.cy}.
  \item\label{item.fam.cent.ext} 
  The algebras $D_x$ are 4-Calabi-Yau algebras having central  regular elements $\Omega_x$ 
such that $D_x/(\Omega_x)=A(\sfw)$. 
\end{enumerate}
\end{lemma}
\begin{proof}
\cref{item.def.D}
Let $D$ be the algebra constructed in  \Cref{cor.cy}.

The hypothesis that $\sfw$ is a superpotential
  says there are elements $\sfw_1,\ldots,\sfw_n \in V^{\otimes m}$
  such that
  \begin{equation*}
     \sfw \;=\; x_1\sfw_1+\cdots +x_n\sfw_n \;=\; \sfw_1x_1+\cdots +\sfw_nx_n.
  \end{equation*}

By definition, $D=TV/I$ where $I$ is the ideal generated by
$\{\sfw_2,\ldots,\sfw_n\}$ and
$\{[x_i,\sfw_1] \; | \; i=2,\ldots,n\}$.  This is clearly the same as
the ideal generated by $\{\sfw_2,\ldots,\sfw_n\}$ and
$\{[x_i,\sfw_j] \; | \; 1 \le i,j \le n\}$.  
We note that
$$
{\rm span}\{\sfw_1,\ldots,\sfw_n\} \;=\;  \{ \langle \psi, \sfw\rangle \; | \; \psi \in V^*\}\;= \;\langle V^*,\sfw \rangle
$$
and
$$
{\rm span}\{\sfw_2,\ldots,\sfw_n\} \;=\;  \{ \langle \psi, \sfw\rangle \; | \; \psi \in x_1^\perp\} \;=\; \langle x_1^\perp,\sfw \rangle.
$$
It is now apparent that $I=I_{x_1}$, i.e., $D=D_{x_1}$.

\cref{item.fam.cent.ext}
Since $\dim_\Bbbk(\langle V^*,\sfw \rangle)=n$, $\langle x^\perp,\sfw \rangle$ is a codimension-one subspace of $\langle V^*,\sfw \rangle$
or, equivalently, the image of $\langle V^*,\sfw \rangle$ in $TV/I_x$ has dimension 1; thus, if $\Omega_x$ is a non-zero element in it, then
$D_x/(\Omega_x)=A(\sfw)$. 
In $D_x$, $[y,\langle \psi,\sfw\rangle] =0$ for all $y \in V$ and all $\psi \in V^*$. This says that $\Omega_x$ belongs to the center of 
$D_x$.
\end{proof}

\begin{remark}
  It is reasonable to ask whether as $x$ varies over $V-\{0\}$
  every codimension-1 subspace of the degree-$m$ relations for
  $A(\sfw)$ arises as the degree-$m$ relations for some $D_x$. The
  answer is ``yes'' and the proof is as follows.  Let
  $R'={\rm span}\{\sfw_2',\ldots,\sfw_n'\}$ be a codimension-1
  subspace of $\langle V^*,\sfw \rangle$. Let
  $\sfw_1' \in \langle V^*,\sfw \rangle$ be an element that is not in
  $R'$. Since the linear map
  $ \langle -,\sfw \rangle:V^* \to V^{\otimes m}$ has rank $n$, there
  is a basis $\{x,x_2',\ldots,x_n'\}$ for $V$ such that
  $\sfw=x\sfw_1'+x_2'\sfw_2'+\cdots +x_n'\sfw_n'$. It is now clear
  that $R'=\langle x^\perp,\sfw\rangle$.
\end{remark}

  We now know that the $D_x$'s have the same Hilbert series for
  all $x \in V-\{0\}$ but in order to complete the proof of \Cref{thm2} we
  need to construct a single sheaf of algebras $\cD$ on $\PP(V)$ whose fibers at the $\Bbbk$-points
  $x \in \PP(V)$ are the algebras $D_x$. The precise statement which we wish to prove is the following.
  
\begin{theorem}
	Let $A({\sfw})$ be a $3$-Calabi-Yau algebra generated by $x_1,\ldots,x_n$. Then there are flat sheaves $\cD$ and $\cA(\sfw)$ of graded $\cO_{\PP(V)}$-algebras and a morphism $\cD\to\cA(\sfw)$ such that for every $\Bbbk$-point $x \in \PP(V)$, the fibers of $\cD$ and $\cA(\sfw)$ at $x$ are the algebras $D_{x}$ (defined in \Cref{thm2}) and $A(\sfw)$, respectively, and the morphism $\cD\to\cA(\sfw)$ induces the projection
	\begin{equation*}
		D_{x}\to D_{x}/(\Omega_{x})=A(\sfw).
	\end{equation*}
\end{theorem}

\begin{proof}
We continue to work with a fixed basis $\{x_1,\ldots,x_n\}$ for $V$ and write $\sfw=x_{1}\sfw_{1}+\cdots+x_{n}\sfw_{n}$. 

Let $\{c_1,\ldots,c_n\}$ be the basis for $V^*$ that is dual to $\{x_1,\ldots,x_n\}$. 
Let $R=\Bbbk[c_{1},\ldots,c_{n}]$ be the polynomial ring on indeterminates $c_{1},\ldots,c_{n}$ placed in degree-one.
Thus, $\PP(V)=\Proj(R)$. Let $E$ be the $R$-algebra generated by $x_{1},\ldots,x_{n}$ modulo the relations
  \begin{equation*}
    c_{i}\sfw_{j}\,-\, c_{j}\sfw_{i} \; = \; [x_{i},\sfw_{j}] \; = \; 0,  \qquad 1\leq i,j\leq n,
  \end{equation*}
or, more succinctly, modulo the relations
  \begin{equation*}
    c_{i}\sfw_{j}\,-\, c_{j}\sfw_{i} \; = \; [V,\langle V^*,\sfw\rangle] \; = \; 0,  \qquad 1\leq i,j\leq n.
  \end{equation*}
  Viewing $E$ as a graded $R$-module  with respect to $c_{1},\ldots,c_{n}$, we define $\cD$ to be the quasi-coherent sheaf 
  of algebras on $\PP(V)$ whose sections over a basic open set $U_f=\Spec(R[f^{-1}]_0)$ is $E[f^{-1}]_0$. 
  Since $E$ has an algebra structure and also has a grading with $x_{1},\ldots,x_{n}$ placed in degree-one, 
  $\cD$ is a sheaf of graded   $\cO_{\PP(V)}$-algebras on $\PP(V)$.\footnote{The definition of $\cD$ does not depend on the basis $x_{1},\ldots,x_{n}$ for $V$.} 
  
  The fiber $\cD_{\fp}$ at a point $\fp\in\PP(V)$ (e.g., a $\Bbbk$-point, a closed point, or the generic point), which is by definition the pullback of $\cD$ along $\Spec(\Bbbk(\fp))  \to\PP(V)$, is obtained from $E$ by replacing the base ring $R$ by $\Bbbk(\fp)$ and $c_{i}$ in the relations by $\gamma_{i}:=c_{i}/c\in\Bbbk(\fp)$, where $c\in R_{1}$ is a fixed element not belonging to $\fp$.
 So the fiber $\cD_{\fp}$ is the $\Bbbk(\fp)$-algebra generated by $x_1,\ldots,x_n$ modulo the relations
   \begin{equation*}
    \c_{i}\sfw_{j}\,-\, \c_{j}\sfw_{i} \; = \; [V,\langle V^*,\sfw\rangle] \; = \; 0,  \qquad 1\leq i,j\leq n.
  \end{equation*}
  We will show that this algebra is $D_{x}$ with $\Bbbk$ replaced by $\Bbbk(\fp)$ and $x$ replaced by $y:=\c_{1}x_{1}+\cdots+\c_{n}x_{n}\in V\otimes_{\Bbbk}\Bbbk(\fp)$. Since $y^\perp={\rm span}_{\Bbbk(\fp)}\{ \c_i c_j-\c_j c_i \; | \; 1 \le i,j \le n\}\subseteq V^{*}\otimes_{\Bbbk}\Bbbk(\fp)$, $\langle y^{\perp},\sfw\rangle$
  is the span of the elements  
  $$
\langle \c_i c_j-\c_j c_i, \sfw \rangle \;=\; \langle \c_i c_j-\c_j c_i, x_{1}\sfw_{1}+\cdots+x_{n}\sfw_{n} \rangle \;=\; \c_{i}\sfw_{j}-\c_{j}\sfw_{i}.
$$
  Hence $\cD_{\fp}$ is the $\Bbbk(\fp)$-algebra generated by $x_1,\ldots,x_n$ modulo the relations
   \begin{equation*}
\langle y^{\perp},\sfw\rangle \; = \; [V,\langle V^*,\sfw\rangle] \; = \; 0;
  \end{equation*}
  i.e., $\cD_{\fp}$ is $D_y$ defined over $\Bbbk(\fp)$. In particular, if $\fp$ is a $\Bbbk$-point $x\in\PP(V)$, then the element $y\in V\otimes_{\Bbbk}\Bbbk(\fp)=V$ is equal to $x$ when viewed as a point of $\PP(V)$. So the fiber $\cD_{x}$ at $x$ is the algebra $D_{x}$.

Similarly, we define the sheaf of graded $\cO_{\PP(V)}$-algebras $\cA(\sfw)$ to be the sheaf associated to the $R$-algebra $F$ generated by $x_{1},\ldots,x_{n}$ modulo the relations $\langle V^{*},\sfw\rangle$. As $\fp$ ranges over the points of $\PP(V)$, we obtain 
3-Calabi-Yau algebras $A(\sfw) \otimes_\Bbbk \Bbbk(\fp)$ over the fields $\Bbbk(\fp)$ all having the same Hilbert series as $A(\sfw)$. The
canonical surjective  algebra homomorphism $E \to F$ induces an epimorphism $\cD\to\cA(\sfw)$ that specializes to the projection $D_{y}\to A(\sfw)\otimes_\Bbbk\Bbbk(\fp)$.


Applying \Cref{cor.cy} with $\Bbbk$ replaced by the residue field $\Bbbk(\fp)$ at a point $\fp$ in $\PP(V)$,
we conclude that the dimension of the  homogeneous component $\cD_n$ at the point $\fp$ is the same for all $\fp$. 
Hence by \cite[Lem.~II.8.9]{Hart}, each $\cD_n$ is locally free and $\cD$ is therefore flat. So is $\cA(\sfw)$ for the same reason.
\end{proof}

\begin{proof}[Alternative proof]
  Consider the free sheaf $\cV=V\otimes_{\Bbbk}\cO_{\PP(V)}$ over $\PP(V)$. We denote its dual by $\cV^*$; note that we have
  evaluation morphisms
  \begin{equation*}
    \mathrm{ev}:\cV^*\otimes \cV^{\otimes \ell}\to \cV^{\otimes (\ell-1)}. 
  \end{equation*}
  pairing $\cV^*$ against the leftmost tensorand.

  We will construct $\cD$ as $T\cV/(\cR_{m}\oplus \cR_{m+1})$, where $T\cV$ is the tensor
  algebra on the sheaf $\cV$ and
  $\cR_{m}$ and $\cR_{m+1}$ are subsheaves of $\cV^{\otimes m}$ and
  $\cV^{\otimes(m+1)}$, corresponding to \Cref{eq:rel1,eq:rel2}.

  Since over every open affine subset $\Spec(R)\subseteq \PP(V)$ the sheaf
  $\cV$ corresponds to the free $R$-module on the vector space $V$,
  $\sfw$ induces a section of $\cV^{\otimes(m+1)}$ (i.e., a morphism
  $\cO\to \cV^{\otimes(m+1)}$) denoted in the sequel by the same
  symbol.

  First, let $\cR_{m+1}\subseteq \cV^{\otimes(m+1)}$ be the image of
  the morphism
\begin{equation*}
  \begin{tikzpicture}[auto,baseline=(current  bounding  box.center)]
   \path[anchor=base] (0,0) node (vv*) {$\cV\otimes \cV^*$} +(3,0) node (vv*v) {$\cV\otimes \cV^*\otimes \cV^{\otimes(m+1)}$} +(7,0) node (vs) {$\cV\otimes \cV^{\otimes m}$} +(10,0) node (v) {$\cV^{\otimes(m+1)}$,};
   \draw[->] (vv*) to node[pos=.5,auto] {$\scriptstyle \mathrm{id}\otimes\sfw$} (vv*v);
   \draw[->] (vv*v) to node[pos=.5,auto] {$\scriptstyle \mathrm{id}_{\cV}\otimes\mathrm{ev}$} (vs);
   \draw[->] (vs) to node[pos=.5,auto] {$\scriptstyle [-,-]$} (v);      
  \end{tikzpicture}
\end{equation*}
where the last morphism is the commutator of the leftmost $\cV$ and
the tensor product $\cV^{\otimes m}$. $\cR_{m+1}$ specializes to
\Cref{eq:rel2} at $\Bbbk$-points of $\bP(V)$.

Similarly, we define $\cR_m\subseteq \cV^{\otimes m}$ specializing to
\Cref{eq:rel1} as follows.
Since the space of global sections of $\cO(1)$ is canonically isomorphic to $V^{*}$, we have a canonical epimorphism $V^{*}\otimes_{\Bbbk}\cO\to\cO(1)$. The dual of this morphism is a monomorphism $\cO(-1)\to V\otimes_{\Bbbk}\cO=\cV$, whose image specializes to the $1$-dimensional subspace $\Bbbk x\subseteq V$ at each $\Bbbk$-point $x\in\PP(V)$. Regard $\cO(-1)$ as a subsheaf of $\cV$ and consider its annihilator $\cO(-1)^\perp\subseteq
\cV^*$. We define $\cR_m$ as the image of the sheaf morphism
\begin{equation*}
	\begin{tikzcd}
		\cO(-1)^{\perp}\ar[r,"\id\otimes\sfw"] & \cO(-1)^{\perp}\otimes\cV^{\otimes(m+1)}\ar[r,"\mathrm{ev}"] & \cV^{\otimes m}.
	\end{tikzcd}
\end{equation*}
As explained above, we can now define $\cD$ as
$T\cV/(\cR_m\oplus \cR_{m+1})$.

The sheaf $\cA(\sfw)$ is defined as $T\cV/(\cR)$, where $\cR$ is the image of
\begin{equation*}
	\begin{tikzcd}
		\cV^{*}\ar[r,"\id\otimes\sfw"] & \cV^{*}\otimes\cV^{\otimes(m+1)}\ar[r,"\mathrm{ev}"] & \cV^{\otimes m}
	\end{tikzcd}.
\end{equation*}

These basis-free definitions of $\cD$ and $\cA(\sfw)$ agree with those in the previous proof, and we can 
complete the proof using the same argument.
\end{proof}

\begin{proof}[Proof of \Cref{th.fl-nc}]
  This follows from \Cref{thm2}\cref{thm2.flatfamily} and the general remark that flatness of families of
  connected graded algebras transports over to their $\QGr$
  categories, recorded in \Cref{le.fl-aux} below.
\end{proof}

\begin{lemma}\label{le.fl-aux}
  Let $R$ be a commutative ring and $D$ a non-negatively graded
  $R$-algebra with $D_0=R$.
  If the homogeneous components $D_n$ are flat over $R$ then so is the
  abelian category $\QGr(D)$ where $\QGr(D)$ is the category  defined in \Cref{def.fam}.
\end{lemma}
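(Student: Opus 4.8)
The plan is to verify the criterion of \cite[Definition~3.2]{lvdb-def} directly by producing, inside $\QGr(D)$, a generating family of objects that are flat over $R$. Recall that in this framework an object $C$ of an $R$-linear Grothendieck category $\mathcal C$ counts as $R$-flat when the functor $C\otimes_R-\colon\Mod(R)\to\mathcal C$ is exact, where $C\otimes_R M$ is formed by presenting $M$ as a cokernel of free $R$-modules and applying the corresponding cokernel of coproducts of copies of $C$; and $\mathcal C$ is flat over $R$ as soon as it admits a generating set of such $R$-flat objects. The argument then splits into two parts: first that the ambient category $\Gr(D)$ is flat over $R$, and second that passing to the Serre quotient $\QGr(D)=\Gr(D)/\Tors(D)$ preserves this property.

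First I would treat $\Gr(D)$. The shifts $D(n)$, $n\in\ZZ$, form a generating set, and the degree-$k$ component of $D(n)\otimes_R M$ is $D_{n+k}\otimes_R M$, so $\otimes_R M$ acts degreewise. Since each homogeneous component $D_j$ is flat over $R$ by hypothesis, the functor $D(n)\otimes_R-$ is exact in every degree, hence exact; thus each $D(n)$ is $R$-flat and $\Gr(D)$ is flat over $R$. (This is the same as the statement that the module category over the flat $R$-linear $\ZZ$-algebra attached to the grading of $D$ is flat over $R$.)

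Next I would pass to the quotient. Write $\pi\colon\Gr(D)\to\QGr(D)$ for the localization functor. It is exact and, being the left adjoint of the section functor, preserves all colimits; in particular it preserves the coproducts and cokernels out of which $C\otimes_R M$ is built, so there is a natural isomorphism $\pi(C)\otimes_R M\cong\pi(C\otimes_R M)$ for every $C\in\Gr(D)$ and every $M\in\Mod(R)$. Applying this with $C=D(n)$: for a short exact sequence of $R$-modules, exactness of $D(n)\otimes_R-$ gives a short exact sequence in $\Gr(D)$, and the exact functor $\pi$ keeps it exact; hence each $\pi(D(n))$ is an $R$-flat object of $\QGr(D)$. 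Finally, since $\pi$ is essentially surjective and cocontinuous, it carries the generating family $\{D(n)\}$ of $\Gr(D)$ to a generating family $\{\pi(D(n))\}$ of $\QGr(D)$, so $\QGr(D)$ has a generating set of $R$-flat objects and is flat over $R$.

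The only real point requiring care is the interaction of $\pi$ with the $R$-tensoring, namely the isomorphism $\pi(C)\otimes_R M\cong\pi(C\otimes_R M)$ together with the simultaneous exactness of $\pi$; this is the step I would expect to be the crux. Once it is established, the transfer of the flat-generator property from $\Gr(D)$ to its localization is automatic, and the conclusion follows with no further computation.
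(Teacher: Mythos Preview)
Your argument is correct and follows the same two-step strategy as the paper: first establish that $\Gr(D)$ is flat over $R$ via the generators $D(n)$, then pass to the Serre quotient. The paper simply cites \cite[Proposition~3.7]{lvdb-def} for the first step (your parenthetical remark about the $\bZ$-indexed $R$-linear category is exactly the $\fd$ appearing there) and \cite[Lemma~8.13]{lvdb-def} for the second, whereas you unpack the content of those results directly.
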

\begin{proof}
  The $R$-linear category $\Gr(D)$ is equivalent to the category of modules over the small $R$-linear category 
  $\fd$ with objects $i\in \bZ$ and $\Hom_{\fd}(i,j)=D_{j-i}$. The hypothesis implies that $\fd$ is $R$-flat as a {\it linear} category so 
  $\Mod(\fd)$, and hence $\Gr(D)$, is flat over $R$ by \cite[Prop.~3.7]{lvdb-def}.  
Since the quotient functor $\Gr(D) \to \QGr(D)$ is exact and essentially surjective, flatness  of $\QGr(D)$ follows from that
of $\Gr(D)$ by \cite[Lem.~8.13]{lvdb-def}.
\end{proof}

\subsection{Automorphisms}\label{subse.nak}

Let $A=A(\sfw)$ and $D=D(\sfw,\sfp)$ be algebras satisfying the
assumptions of \Cref{th.3-stg}.  We now consider the relation between
$\Aut_{\sf gr}(A)$ and $\Aut_{\sf gr}(D)$.

\cref{pr.aut.lift} identifies a subgroup of $\Aut_{\sf gr}(A)$ that
consists of automorphisms that lift to $D$.

This result does not depend on \Cref{th.3-stg} and its proof will be
used in the proof of \Cref{th.3-stg}.

\begin{proposition}\label{pr.aut.lift}
  Let $A=A(\sfw)$ and $D=D(\sfw,\sfp)$ be algebras satisfying the
  hypotheses of \Cref{th.3-stg}. Let $\s\in\Aut_{\sf gr}(A)$.  If
  every $x_i$ is an eigenvector for $\s$, then $\s$ lifts to an
  automorphism of $D$.
\end{proposition}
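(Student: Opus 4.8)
The plan is to build the lift by extending $\sigma$ to the tensor algebra and checking that the extension preserves the defining ideal of $D$. Since $A$ is generated in degree one and $\sigma$ is graded, $\sigma$ is determined by $\sigma|_V\in\GL(V)$, which by hypothesis is the diagonal map $x_i\mapsto\lambda_i x_i$ with $\lambda_i\in\Bbbk^{\times}$. Let $\widetilde\sigma\colon TV\to TV$ be the graded algebra automorphism extending $\sigma|_V$, so $\widetilde\sigma$ scales a word $x_{j_1}\cdots x_{j_r}$ by $\lambda_{j_1}\cdots\lambda_{j_r}$; on $V^{\otimes m}$ it is $\sigma|_V^{\otimes m}$. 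Because $\sigma$ descends to $A$, the map $\widetilde\sigma$ preserves the defining ideal of $A$, and in particular its degree-$m$ relation space $R=\operatorname{span}\{\partial_1\sfw,\ldots,\partial_n\sfw\}=\langle V^{*},\sfw\rangle$. It then suffices to show that $\widetilde\sigma$ carries the two families of relations \cref{eq:d-rels} into the ideal $I$ they generate; as $\widetilde\sigma$ is invertible this yields $\widetilde\sigma(I)=I$ and hence a descended automorphism of $D=TV/I$ restricting to $\sigma$ on $D_1=V$.

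The key step is the claim that $\widetilde\sigma$ scales $\sfw$, i.e. $\sigma|_V^{\otimes(m+1)}(\sfw)=c\,\sfw$ for some $c\in\Bbbk^{\times}$. To prove it I would use functoriality of the superpotential construction under $\GL(V)$: for any $g\in\GL(V)$ the definition of the pairing gives $(\psi\otimes\id^{\otimes m})(g^{\otimes(m+1)}\sfw)=g^{\otimes m}\big(\langle\psi\circ g,\sfw\rangle\big)$, and since $\psi\circ g$ ranges over $V^{*}$ as $\psi$ does, the relation space of $A(g^{\otimes(m+1)}\sfw)$ is exactly $g^{\otimes m}(R)$. Applying this to $g=\sigma|_V$ and using $\sigma|_V^{\otimes m}(R)=R$ shows $A(\sigma|_V^{\otimes(m+1)}\sfw)$ has relation space $R$, hence coincides with $A$. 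Since $A$ is an $m$-Koszul $3$-dimensional Artin-Schelter regular algebra, the Dubois-Violette uniqueness in \Cref{thm.DV} forces $\Bbbk\,\sigma|_V^{\otimes(m+1)}\sfw=\Bbbk\sfw$, which is the claim.

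With the claim in hand, pairing $\sigma|_V^{\otimes(m+1)}\sfw=c\,\sfw$ against the dual basis $\{\epsilon_i\}$ of $\{x_i\}$ (note $\epsilon_i\circ\sigma|_V=\lambda_i\epsilon_i$) and applying $\epsilon_i\otimes\id^{\otimes m}$ gives $\lambda_i\,\widetilde\sigma(\partial_i\sfw)=c\,\partial_i\sfw$, so
\begin{equation*}
  \widetilde\sigma(\partial_i\sfw)\;=\;c\,\lambda_i^{-1}\,\partial_i\sfw\qquad(1\le i\le n).
\end{equation*}
Thus each $\partial_i\sfw$ — in particular $\Omega=\partial_1\sfw$ — is an eigenvector of $\widetilde\sigma$, which immediately handles the degree-$m$ relations $\partial_i\sfw$ ($i\ge2$). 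For the degree-$(m+1)$ relations I would compute directly
\begin{equation*}
  \widetilde\sigma\big([x_i,\partial_1\sfw]_{p_i}\big)
  \;=\;(\lambda_i x_i)(c\lambda_1^{-1}\partial_1\sfw)-p_i(c\lambda_1^{-1}\partial_1\sfw)(\lambda_i x_i)
  \;=\;c\,\lambda_i\lambda_1^{-1}\,[x_i,\partial_1\sfw]_{p_i},
\end{equation*}
again a scalar multiple of a defining relation. Hence every generator of $I$ is sent to a nonzero scalar multiple of itself, giving $\widetilde\sigma(I)=I$ and the desired lift.

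The main obstacle is the scaling identity $\sigma|_V^{\otimes(m+1)}\sfw=c\,\sfw$; everything else is formal. Its proof rests on the uniqueness of the twisted superpotential attached to an $m$-Koszul Artin-Schelter regular algebra, which is where the hypotheses of \Cref{th.3-stg} (through \Cref{thm.DV}) enter. I note that this argument uses neither the goodness of $\sfp$ nor the regularity conclusion of \Cref{th.3-stg}, consistent with the remark that \Cref{pr.aut.lift} is independent of, and feeds into the proof of, \Cref{th.3-stg}.
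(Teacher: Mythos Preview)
Your proof is correct and follows essentially the same route as the paper: show that $\sigma$ scales $\sfw$, deduce that each $\partial_i\sfw$ is an eigenvector, and conclude that every defining relation of $D$ is scaled. The only difference is in how the key scaling identity $\sigma^{\otimes(m+1)}\sfw\in\Bbbk\sfw$ is obtained: the paper simply cites \cite[Thm.~1.1]{MS16} (which characterises $\Aut_{\sf gr}(A(\sfw))$ as those $g\in\GL(V)$ scaling $\sfw$), whereas you rederive this implication from the Dubois-Violette uniqueness in \Cref{thm.DV}. Your derivation is valid; one small point worth making explicit is that $\sigma^{\otimes(m+1)}\sfw$ is again a $Q$-twisted superpotential (since $\sigma$ and $Q$ are both diagonal in the given basis and hence commute), so that the uniqueness clause of \Cref{thm.DV} indeed applies to it.
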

\begin{proof}
By definition, the relations $f_i=\partial_i{\sfw}$ are
  the respective images of $x_i^*\otimes{\sfw}$ through the map
  \begin{equation}\label{eq:ev}
    \begin{tikzpicture}[auto,baseline=(current  bounding  box.center)]
      \path[anchor=base] (0,0) node (l)
      {$V^*\otimes V^{\otimes (m+1)}$} +(5,0) node (r)
      {$V^{\otimes m}$};
      \draw[->] (l) to node[pos=.5,auto]
      {$\scriptstyle
        \mathrm{ev}\otimes\mathrm{id}_{V^{\otimes m}}$} (r);
    \end{tikzpicture}
  \end{equation}
  that evaluates the two leftmost tensorands against each other, where
  $\{x_i^*\}\subseteq V^*$ is the basis dual to $\{x_i\}$.
  
By \cite[Thm.~1.1]{MS16}, the automorphisms of $A$ are
  (identified in the obvious manner with) precisely those linear
  automorphisms of $V=A_1$ which scale the superpotential
  ${\sf w}\in V^{\otimes{(m+1)}}$. In other words, the element
  ${\sf w}$ is an eigenvector for the action of
  $\Gamma=\langle \s\rangle$ on $V^{\otimes (m+1)}$ (with the tensor
  product being taken in the monoidal category of $\Gamma$-modules).

  Now suppose that  the generators $x_i$ are eigenvectors
  for $\s\in GL(V)$. Then each $\Bbbk f_i$ is the  image of
  the $\Gamma$-submodule $\Bbbk x_i^*\otimes \Bbbk{\sfw}$ of the left-hand side of the 
  $\Gamma$-module morphism \Cref{eq:ev}, so they are
  $\Gamma$-submodules of $V^{\otimes m}$.
Thus, each $f_i$ is an eigenvector for $\s^{\otimes m}$. Since $x_1$ is, by hypothesis, also an eigenvector for $\s$,
  it acts as a scalar on each of the
  defining relations \Cref{eq:d-rels} for $D$.
\end{proof}

In what follows we assume that the four conditions in \Cref{th.3-stg} are satisfied.

We have the following consequence of \Cref{pr.nak-ad} on how
the respective Nakayama automorphisms interact.

\begin{proposition}\label{pr.nak-3}
Let $A(\sfw)$ and $D(\sfw,\sfp)$ be Artin-Schelter regular algebras satisfying the hypotheses, and hence the conclusions, of \Cref{th.3-stg}.
If $Q={\rm diag}(q_{1},\ldots,q_{n})$, then the Nakayama automorphism of $D(\sfw,\sfp)$ scales $x_i$ by $p_i^{-1}q_i^{-1}$, i.e., $\nu_{\!{}_D}(x_i)=(p_iq_i)^{-1}x_i$ for all $i$. 
\end{proposition}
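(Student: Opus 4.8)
The plan is to deduce the result from \Cref{pr.nak-ad}, which gives $\nu_{\!{}_A} = \tau^{-1}\nu_{\!{}_D}$, equivalently $\nu_{\!{}_D} = \tau\,\nu_{\!{}_A}$, on the common generating space $V = A_1 = D_1$. So it suffices to compute the two automorphisms $\tau$ and $\nu_{\!{}_A}$ on each generator $x_i$ and compose them.

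First I would pin down $\tau$ on generators. For $i \geq 2$ the defining relation $[x_i,\Omega]_{p_i} = x_i\Omega - p_i\Omega x_i = 0$ of $D(\sfw,\sfp)$ gives $\Omega x_i = p_i^{-1}x_i\Omega$, so $\tau(x_i) = p_i^{-1}x_i$. For $i = 1$, \Cref{le.stg-cntr} shows $x_1\Omega = q_1\Omega x_1$, hence $\Omega x_1 = q_1^{-1}x_1\Omega$ and $\tau(x_1) = q_1^{-1}x_1 = p_1^{-1}x_1$ using the hypothesis $p_1 = q_1$. Therefore $\tau(x_i) = p_i^{-1}x_i$ for all $i$; in particular $\tau$ is exactly the diagonal automorphism $\varphi^{-1}$ attached to $\sfp$ in \Cref{th.3-stg}\cref{item.eigenvec}.

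The substantive step is the computation of the Nakayama automorphism of the $3$-dimensional algebra $A(\sfw)$ itself, for which I claim $\nu_{\!{}_A}(x_i) = q_i^{-1}x_i$ when $Q = \diag(q_1,\ldots,q_n)$. Here I would exploit the structure of $A$ as a $Q$-twisted superpotential algebra: by \Cref{lem.RRZ} it is twisted Calabi-Yau of dimension $3$, its minimal resolution of ${}_A\Bbbk$ is governed by the matrix $\sfM$ through the relations $\sfM\sfx = \sff$ and $\sfx^t\sfM = (Q\sff)^t$ recorded before \Cref{le.stg-cntr}, and the Gorenstein duality identifies the Nakayama automorphism with the twist that $Q$ introduces between the resolution and its dual. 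In the diagonal case this twist scales $x_i$ by $q_i^{-1}$; as a consistency check, when $Q = \id_V$ the superpotential $\sfw$ is genuine, $A$ is $3$-Calabi-Yau, and the formula correctly returns $\nu_{\!{}_A} = \id$.

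Combining the two computations, and noting that both $\tau$ and $\nu_{\!{}_A}$ are diagonal in the basis $x_1,\ldots,x_n$ (so they commute, consistent with the centrality of Nakayama automorphisms invoked after \Cref{pr.nak-ad}), I obtain
\[
\nu_{\!{}_D}(x_i) \;=\; \tau\big(\nu_{\!{}_A}(x_i)\big) \;=\; \tau\big(q_i^{-1}x_i\big) \;=\; q_i^{-1}p_i^{-1}x_i \;=\; (p_iq_i)^{-1}x_i,
\]
as desired. The main obstacle is the middle step: extracting $\nu_{\!{}_A}$ from the twisted superpotential data and fixing the correct normalization, i.e.\ that the scalar is $q_i^{-1}$ rather than $q_i$ and carries no spurious sign. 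Everything else is a direct manipulation of the defining relations of $D$ together with an application of \Cref{pr.nak-ad}.
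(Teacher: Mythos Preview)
Your approach is exactly the paper's: apply \Cref{pr.nak-ad} to write $\nu_{\!{}_D}=\tau\,\nu_{\!{}_A}$ on $V$, compute $\tau(x_i)=p_i^{-1}x_i$ from the relations (as you do), and combine with $\nu_{\!{}_A}(x_i)=q_i^{-1}x_i$. The ``main obstacle'' you flag is not actually an obstacle: the paper simply cites \cite[Thm.~1.8]{MS16} for the fact that the Nakayama automorphism of an $m$-Koszul AS-regular algebra $A(\sfw)$ with $Q$-twisted superpotential acts on $V$ as $(Q^t)^{-1}$, which in the diagonal case gives $\nu_{\!{}_A}(x_i)=q_i^{-1}x_i$ directly, with no normalization ambiguity to resolve.
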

\begin{proof}
  This follows from \Cref{pr.nak-ad}, together with the fact that $\tau$, ``conjugation'' by $\Omega$, acts by
  \begin{equation*}
    x_i\mapsto p_i^{-1} x_i
  \end{equation*}
  and \cite[Thm.~1.8]{MS16} which, in our setting, says that
  $\nu_{\!{}_A}$ scales $x_i$ by $q_i^{-1}$.
\end{proof}

Applying this result to the algebras in \Cref{cor.alt-cy} we obtain the following result.

\begin{corollary}\label{cor.d-cy}
Suppose $q_1=1$. If $\sfp=(1,q_2^{-1}, \ldots, q_n^{-1})$, then $D({\sf w},\sfp)$ is 4-Calabi-Yau.
\end{corollary}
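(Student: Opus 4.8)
The plan is to combine the regularity already established in \Cref{cor.alt-cy} with the explicit description of the Nakayama automorphism in \Cref{pr.nak-3}. Recall that an Artin-Schelter regular (equivalently twisted Calabi-Yau) algebra is Calabi-Yau precisely when its Nakayama automorphism is the identity, and since $D(\sfw,\sfp)$ is generated as a $\Bbbk$-algebra by $x_1,\ldots,x_n$, it suffices to check that $\nu_{\!{}_D}$ fixes each generator.

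First I would observe that the tuple $\sfp=(1,q_2^{-1},\ldots,q_n^{-1})$ is exactly the one produced by \Cref{cor.alt-cy} in the case $k=-1$, with $Q=\diag(1,q_2,\ldots,q_n)$, so that $q_1=1=p_1$ as required in the hypotheses of \Cref{th.3-stg}. Thus $\sfp$ is good and $D(\sfw,\sfp)$ is a $4$-dimensional Artin-Schelter regular algebra; by \Cref{lem.RRZ} it is twisted Calabi-Yau of dimension $4$, so its Nakayama automorphism $\nu_{\!{}_D}$ is defined and the hypotheses of \Cref{pr.nak-3} are in force.

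Next I would invoke \Cref{pr.nak-3}, which gives $\nu_{\!{}_D}(x_i)=(p_iq_i)^{-1}x_i$ for all $i$. It then remains only to compute $p_iq_i$ for the chosen $\sfp$: for $i=1$ we have $p_1=q_1=1$, so $p_1q_1=1$; for $i\geq 2$ we have $p_i=q_i^{-1}$, whence $p_iq_i=1$ as well. Hence $\nu_{\!{}_D}(x_i)=x_i$ for every $i$, and since these elements generate $D(\sfw,\sfp)$ we conclude $\nu_{\!{}_D}=\id$. Therefore $D(\sfw,\sfp)$ is $4$-Calabi-Yau.

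There is no real obstacle here: once \Cref{pr.nak-3} is in hand the statement reduces to the one-line observation that the prescribed $\sfp$ is designed precisely to cancel the twist, that is, to make $p_iq_i=1$ for all $i$. The only point requiring care is confirming that this $\sfp$ satisfies the hypotheses of \Cref{th.3-stg}, so that $\nu_{\!{}_D}$ is actually available and scales the generators as stated; this is supplied by the reference to \Cref{cor.alt-cy}.
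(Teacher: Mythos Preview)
Your proof is correct and follows essentially the same approach as the paper: both simply invoke \Cref{pr.nak-3} and observe that the choice $p_i=q_i^{-1}$ makes $(p_iq_i)^{-1}=1$ for all $i$, whence $\nu_{\!{}_D}=\id_D$. Your version is more explicit in verifying that \Cref{pr.nak-3} applies (via \Cref{cor.alt-cy} with $k=-1$), which the paper leaves implicit from the surrounding context.
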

\begin{proof} 
  \Cref{pr.nak-3} implies that $\nu_{\!{}_D}=\id_D$ in this case.
\end{proof}

By \cite[Cor.~5.4]{RRZ17}, the homological determinant of the Nakayama automorphism is 1 for all noetherian Artin-Schelter regular algebras. 
By \cite[Thm.~1.6]{MS16}, the same result holds in the non-noetherian case provided the algebra is $m$-Koszul and Artin-Schelter regular.
It is conjectured at \cite[Conj.~4.12]{MS16} that this holds for {\it all} Artin-Schelter regular algebras. \Cref{pr.hdet-triv} below shows
that the normal extensions $D(\sfw,\sfp)$ produced by \cref{th.3-stg} have this property.

Before stating that result, we give a relatively self-contained proof
of the following auxiliary result. It is essentially \cite[Thm.~1.2]{MS16},
with the minor caveat that the homological determinant in
\cite{MS16} is inverse to that of \cite{jz}. Nevertheless,
\cite[Thm.~1.2]{MS16} appeals to \cite{wz-skw}, which agrees with
\cite{jz}. For this reason, \cite[Thm.~1.2]{MS16} is correct for
the version of the homological determinant used here and in
\cite{jz,wz-skw}, but not precisely as stated, for the version of
$\hdet$ defined in \cite[$\S$2.2]{MS16}.

Throughout the statement and the proof of \Cref{le.scale-w}, and in later sections, we identify each graded algebra automorphism of  
$A=TV/(R)$ (when the relations $R$ have degree $\ge 2$) with the  automorphism of $TV$ that agrees on the
degree-1 component $V$.

\begin{lemma}\label{le.scale-w}
  Let $A$ be an $m$-Koszul Artin-Schelter regular algebra and let $\sfw \in V^{\otimes \ell}$ be a $Q$-twisted superpotential such that
  $A \cong A(\sfw,\ell-m)$ (see \Cref{cor.alt-cy}). 
  If $\sigma \in \Aut_{\sf gr}(A)$, then $\sfw$ is an eigenvector for $\sigma^{\otimes\ell}$ with eigenvalue $\hdet(\sigma)$.
\end{lemma}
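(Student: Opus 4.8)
The plan is to prove the statement in two stages: first that $\sfw$ is an eigenvector for $\sigma^{\otimes\ell}$, and then that the eigenvalue is exactly $\hdet(\sigma)$. Throughout I identify $\sigma$ with the linear automorphism of $V=A_1$ it induces, so that $\sigma$ acts on $V^{\otimes p}$ by $\sigma^{\otimes p}$.

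For the first stage I would invoke the uniqueness half of \Cref{thm.DV}. One checks directly that $\sigma^{\otimes\ell}(\sfw)$ is again a twisted superpotential: it is invariant under the defining rotation map for $Q'=(\sigma^t)^{-1}Q\,\sigma^t$, since that map intertwines with $\sigma^{\otimes\ell}$. Writing $R$ for the relation space of $A$, a short computation with the contraction operator $\pd$ shows $\pd^{\ell-m}(\sigma^{\otimes\ell}\sfw)=\sigma^{\otimes m}(\pd^{\ell-m}\sfw)=\sigma^{\otimes m}(R)=R$; the point is that contracting $\sigma^{\otimes p}u$ against all functionals yields the same span as $\sigma^{\otimes(p-1)}$ applied to the contractions of $u$, because $\psi\mapsto\psi\circ\sigma$ permutes $V^*$, and $\sigma^{\otimes m}(R)=R$ because $\sigma$ is a graded automorphism of $A=TV/(R)$. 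Hence $A(\sigma^{\otimes\ell}\sfw,\ell-m)=A$, and the uniqueness of the one-dimensional subspace in \Cref{thm.DV} forces $\sigma^{\otimes\ell}(\sfw)=c\,\sfw$ for a scalar $c$.

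For the second stage I would compute $c$ through the minimal free resolution $P_\bullet\to\Bbbk$ of the trivial module, which for an $m$-Koszul Artin–Schelter regular algebra has the shape $0\to A\otimes E_d\to\cdots\to A\otimes V\to A\to\Bbbk\to 0$; the Dubois–Violette description behind \Cref{thm.DV} identifies the top term as $E_d=\Bbbk\sfw\subseteq V^{\otimes\ell}$, sitting in internal degree $\ell$. Since $\sigma$ preserves the relations, it lifts to a $\langle\sigma\rangle$-equivariant structure on $P_\bullet$ compatible with the structure $\varphi^{\Bbbk}=\sigma^{-1}$ on $\Bbbk$; on each $A\otimes E_i$ this lift is $\sigma^{-1}$ on the $A$-factor and a tensor power of $\sigma^{-1}$ on $E_i$, so on the top term $\Bbbk\sfw$ it scales by $c^{-1}$. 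Minimality gives $\Ext^d_A(\Bbbk,\Bbbk)=\Hom_A(P_d,\Bbbk)=E_d^{*}$, concentrated in degree $-\ell$, and \Cref{le.hdet-alt} computes $\hdet^{-1}(\sigma)$ as the scalar by which $g^{-1}\triangleright$ acts there. Tracing that action, the coefficient twist on $\Bbbk$ is trivial, so $g^{-1}\triangleright f=f\circ\Phi_d$ for the top equivariant map $\Phi_d$; since $\Phi_d$ scales $\Bbbk\sfw$ by $c^{-1}$, it multiplies $E_d^{*}$ by $c^{-1}$. Thus $c^{-1}=\hdet^{-1}(\sigma)$, i.e. $c=\hdet(\sigma)$.

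The main obstacle is the bookkeeping in the second stage, in two respects. First, one must pin down the identification of the top syzygy $E_d$ with $\Bbbk\sfw$, which is precisely the pre-regularity content of Dubois–Violette underlying \Cref{thm.DV}. Second, and more delicately, one must control the direction of every twist: whether the equivariant lift uses $\sigma$ or $\sigma^{-1}$, and whether \Cref{le.hdet-alt} returns $\hdet(\sigma)$ or its inverse. As the remark preceding this lemma warns, the literature disagrees on this convention, and it is exactly the consistent use of $\varphi=\sigma^{-1}$ throughout that converts the naive scalar $c^{-1}$ into the asserted value $\hdet(\sigma)$ rather than $\hdet(\sigma)^{-1}$.
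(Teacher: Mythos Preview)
Your proposal is correct and follows essentially the same route as the paper. Both arguments compute the eigenvalue via \Cref{le.hdet-alt}, identifying $\Ext^d_A(\Bbbk,\Bbbk)$ through the Dubois--Violette minimal projective resolution with top term $A\otimes\Bbbk\sfw$, lifting the equivariant structure $\varphi=\sigma^{-1}$ to that resolution as tensor powers of $\sigma^{-1}$, and reading off the scalar on the one-dimensional piece $E_d^*$. The only difference is cosmetic: your Stage~1, establishing that $\sfw$ is an eigenvector via the uniqueness clause of \Cref{thm.DV}, is a separate argument the paper does not make explicitly---there the eigenvector property is absorbed into Stage~2, since the lifted equivariant map on the rank-one top syzygy $A\otimes\Bbbk\sfw$ is forced to preserve $\Bbbk\sfw$ and hence act by a scalar.
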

\begin{proof}
By \Cref{le.hdet-alt}, $\hdet(\sigma)^{-1}$ is  the scalar by which $\sigma\triangleright$ acts on the
  degree-$(-\ell)$ component of $\Ext^d_A(\Bbbk,\Bbbk)$. Resolving ${}_A\Bbbk$ by projective modules $P_\hdot$, this amounts to the
  action obtained by pre-composing morphisms in $\Hom_A(P_d,\Bbbk)$
  with the action of $(\varphi^{P_d})^{-1}$.

  Now, according to \cite[equation (4.2)]{dv2}, the minimal resolution
  $P_{\hdot}$ consists of free left $A$-modules generated by the
  partial derivatives of ${\sfw}$, and the left-most projective $P_d$ is
  precisely the free $A$-module with basis ${\sfw}$. It follows from
  that equation that $(\varphi^{P_d})^{-1}$ is the application of
  $(\sigma^{-1})^{\otimes (\ell+1)}$ to
  \begin{equation*}
    A\otimes \Bbbk {\sfw}\subseteq A^{\otimes(\ell+1)}.
  \end{equation*}
  \Cref{le.hdet-alt} now implies that $\hdet(\s)^{-1}$ is the eigenvalue associated to the eigenvector ${\sfw}$ of
  $(\sigma^{-1})^{\otimes\ell}$, hence the conclusion.
\end{proof}

\begin{theorem}\label{pr.hdet-triv}
Let $A=A(\sfw)$ and $D=D(\sfw,\sfp)$ be Artin-Schelter regular connected graded algebras satisfying all conditions in \Cref{th.3-stg}. Then $\hdet(\nu_{\!{}_D})=1$.
\end{theorem}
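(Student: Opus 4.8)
The plan is to apply \Cref{th.hdets} to the automorphism $\sigma=\nu_{\!{}_D}$. Since $D$ is twisted Calabi-Yau of dimension $4$ and $\Omega\in D_m$ is normal regular with $A=D/(\Omega)$, and since $\Omega$ is an eigenvector for $\nu_{\!{}_D}$ by \Cref{cor.eigen}, the hypotheses of \Cref{th.hdets} are met. It therefore reduces the computation of $\hdet(\nu_{\!{}_D})$ to identifying two scalars: the eigenvalue $\lambda$ of $\nu_{\!{}_D}$ on $\Omega$, and the homological determinant $\hdet_A(\nu_{\!{}_D}|_A)$ of the induced automorphism of $A$, after which one checks that their product is $1$.

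First I would compute $\hdet_A(\nu_{\!{}_D}|_A)$. By \Cref{pr.nak-3} the automorphism $\nu_{\!{}_D}$, hence also its restriction to $A$, scales $x_i$ by $(p_iq_i)^{-1}$; writing $\varphi(x_i)=p_ix_i$ and $Q=\diag(q_1,\ldots,q_n)$, this restriction equals $(\varphi Q)^{-1}$, and $\varphi$ and $Q$ commute. By \Cref{le.scale-w}, $\sfw$ is an eigenvector for $(\nu_{\!{}_D}|_A)^{\otimes(m+1)}$ whose eigenvalue is exactly $\hdet_A(\nu_{\!{}_D}|_A)$. Now \Cref{le.hdet} gives $Q^{\otimes(m+1)}(\sfw)=\sfw$, while condition \cref{item.eigenvec} of \Cref{th.3-stg} gives $\varphi^{\otimes(m+1)}(\sfw)=q_1\sfw$; combining these yields
\[
  (\nu_{\!{}_D}|_A)^{\otimes(m+1)}(\sfw) \;=\; \big(\varphi^{\otimes(m+1)}\big)^{-1}\big(Q^{\otimes(m+1)}\big)^{-1}(\sfw) \;=\; q_1^{-1}\sfw,
\]
so $\hdet_A(\nu_{\!{}_D}|_A)=q_1^{-1}$.

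Next I would pin down $\lambda$. The same diagonal automorphism acts on $V^{\otimes(m+1)}$, and the previous display shows $\nu_{\!{}_D}^{\otimes(m+1)}(\sfw)=q_1^{-1}\sfw$. Writing $\sfw=\sum_i x_i\otimes\pd_i\sfw$ and comparing the components lying in $x_1\otimes V^{\otimes m}$ (the $x_i$ forming a basis of $V$), the relation $\nu_{\!{}_D}(x_1)=(p_1q_1)^{-1}x_1$ forces $\nu_{\!{}_D}^{\otimes m}(\pd_1\sfw)=p_1\,\pd_1\sfw$. Passing to $D$, this says $\nu_{\!{}_D}(\Omega)=p_1\Omega$, i.e.\ $\lambda=p_1$; and since $p_1=q_1$ by the hypothesis of \Cref{th.3-stg}, we get $\lambda=q_1$.

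Finally, \Cref{th.hdets} gives $\hdet(\nu_{\!{}_D})=\lambda\,\hdet_A(\nu_{\!{}_D}|_A)=q_1\cdot q_1^{-1}=1$, as claimed. The only genuinely delicate point is the determination of $\lambda$ in the third paragraph: one must verify that $\pd_1\sfw$ — not an arbitrary element of $V^{\otimes m}$ — is a $\nu_{\!{}_D}^{\otimes m}$-eigenvector, which is precisely what the coefficient comparison supplies, and one must invoke the normalization $p_1=q_1$ to make the two scalars $\lambda$ and $\hdet_A(\nu_{\!{}_D}|_A)$ cancel.
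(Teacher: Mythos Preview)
Your proof is correct and follows essentially the same approach as the paper: apply \Cref{th.hdets} with $\sigma=\nu_{\!{}_D}$, then compute $\lambda=q_1$ and $\hdet_A(\nu_{\!{}_D}|_A)=q_1^{-1}$ via the eigenvalue of $\sfw$ under the relevant tensor-power automorphisms. The only organizational difference is that the paper first decomposes $\nu_{\!{}_D}|_A=\tau|_A\circ\nu_{\!{}_A}$ via \Cref{pr.nak-ad} and then invokes the multiplicativity of $\hdet$ together with $\hdet(\nu_{\!{}_A})=1$ from \cite{MS16}, whereas you bypass this by using the explicit diagonal form of $\nu_{\!{}_D}$ from \Cref{pr.nak-3} directly and computing the eigenvalue of $\sfw$ in one step using \Cref{le.hdet} and condition~\cref{item.eigenvec}; both routes amount to the same eigenvalue calculation.
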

\begin{proof}
 \Cref{pr.nak-ad} shows that the restriction $\nu_{\!{}_D}|_A$ is
  $\tau|_A\circ \nu_{\!{}_A}$, where $\tau$ is defined on $D$ by
  \begin{equation*}
    \Omega x = \tau(x)\Omega
  \end{equation*}
  and $\tau|_A$ is its descent to $A$ modulo $\Omega$. 
 \Cref{th.hdets} shows that
  \begin{equation}\label{eq:hlh}
    \hdet(\nu_{\!{}_D}) = \lambda \hdet(\nu_{\!{}_D}|_{A}),
  \end{equation}
  where $\lambda$ is defined by $\nu_{\!{}_D}(\Omega)=\lambda \Omega$. Hence, by \cref{pr.nak-ad} and the
  multiplicativity of $\hdet$ (\cite[Prop.~2.5]{jz}),   
  \begin{equation}\label{eq:hlhh}
    \hdet(\nu_{\!{}_D}) = \lambda\cdot \hdet(\tau|_A)\cdot \hdet(\nu_{\!{}_A}).
  \end{equation}
But $\hdet(\nu_{\!{}_A})=1$ by \cite[Thm.~1.6]{MS16}, so the
  rightmost factor in \Cref{eq:hlhh} is 1.

  On the other hand, since $\tau$ fixes $\Omega$, the equation $\nu_{\!{}_D}|_A=\tau\circ\nu_{\!{}_A}$
  shows that $\lambda$ is defined by
  \begin{equation}\label{eq:olo}
   \nu_{\!{}_A}(\Omega)=\lambda\Omega, 
  \end{equation}
  where we have abused notation by using $\nu_A$ to denote the automorphism of the free algebra $TV$ that agrees with $\nu_A$ on $A_1=V$
  (see the discussion preceding   \Cref{le.scale-w}).

  By \cite[Thm.~1.6]{MS16} and \Cref{le.scale-w}, $\nu_{\!{}_A}$
  fixes ${\sf w}$, and \cite[Thm.~1.8]{MS16} shows that the same
  automorphism scales $x_1$ by $q_1^{-1}$. It now follows from the
  fact that $x_1\Omega$ is a sum of monomials appearing in ${\sf w}$ that
  $\nu_{\!{}_A}$ scales $\Omega$ by $q_1$. In other words, \Cref{eq:olo}
  implies $\lambda=q_1$.

  Finally,  \cite[Thm.~1.2]{MS16} shows that
  $\hdet(\tau|_A)$ is equal to the eigenvalue of the $\tau|_A$-eigenvector
  ${\sf w}$ which condition \cref{item.param} in \Cref{th.3-stg} shows
  is equal to $q_1^{-1}$.

  To summarize:
  \begin{itemize}
  \item the leftmost factor in the right-hand side of \Cref{eq:hlhh} is $q_1$;
  \item the middle factor is $q_1^{-1}$;
  \item the rightmost factor is $1$.   
  \end{itemize}
  The desired conclusion that $\hdet(\nu_{\!{}_D})=1$  now follows
  from \Cref{eq:hlhh}.
\end{proof}

\subsection{Zhang twists}

We now consider how our construction of normal extensions behaves with respect to Zhang twists. 
Zhang twisting produces new graded algebras by using families of graded $k$-linear automorphisms called 
twisting systems \cite{Z-twist}. 
The idea extends the idea in \cite[\S8]{ATV2}. 
Here we only consider the Zhang twist associated to a graded algebra automorphism of $A$.

\begin{definition}
	Let $A$ be a graded algebra and $\sigma$ an automorphism on $A$. The \textsf{Zhang twist} ${}^{\sigma}A$ of $A$ by $\sigma$ is defined by ${}^{\sigma}A:=A$ as an graded vector space with a new multiplication $a*b:=\sigma^{\deg b}(a)b$ where $a,b\in A$ are homogeneous elements.\footnote{Our definition of Zhang twist agrees with that of \cite{RRZ14} but differs from \cite{MS16}.}
\end{definition}

For each $f\in V^{\otimes r}$, set ${}^{\sigma}f:=(\id_{V}\otimes\sigma\otimes\cdots\otimes\sigma^{r-1})(f)$. Then the relations in ${}^{\sigma}A$ are given by ${}^{\sigma}f$ where $f$ runs over all relations in $A$.

\begin{proposition}
	Let $A({\sfw})$ and $D(\sfw,\sfp)$ be Artin-Schelter regular algebras as in \cref{th.3-stg}. Let $\sigma$ be an automorphism of $A({\sfw})$ such that $\sigma(x_{i})=s_{i}x_{i}$ for some $s_{i}\in\Bbbk^{\times}$. Then $\sigma$ lifts to an automorphism of $D(\sfw,\sfp)$, and
	\begin{equation*}
		{}^{\sigma}D(\sfw,\sfp)\cong D({}^{\sigma}\sfw,\sfp')
	\end{equation*}
	where $\sfp'=(p'_{1},\ldots,p'_{n})$ is defined by $p'_{i}=p_{i}s_{1}s_{i}^{m}(\hdet_{A}(\sigma))^{-1}$.
\end{proposition}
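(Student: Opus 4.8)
The plan is to realize the isomorphism as the descent of the algebra automorphism of $TV$ that extends $\sigma|_V$, and to extract $\sfp'$ from the way this automorphism transports the twisted relations. First I would apply \Cref{pr.aut.lift}: since $\sigma(x_i)=s_ix_i$ makes every $x_i$ an eigenvector, $\sigma$ lifts to a graded automorphism of $D:=D(\sfw,\sfp)$, so the Zhang twist ${}^{\sigma}D$ is defined. Writing $\sfw_j:=\pd_j\sfw$, so that $\Omega=\pd_1\sfw=\sfw_1$, the defining relations of $D$ are the $\sfw_j$ (degree $m$) and the $[x_j,\Omega]_{p_j}$ (degree $m+1$) for $j=2,\dots,n$. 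By the relation-twisting formula recalled just before the statement, ${}^{\sigma}D\cong TV/(J)$ where $J$ is spanned by the ${}^{\sigma}\sfw_j$ and the ${}^{\sigma}\bigl([x_j,\Omega]_{p_j}\bigr)$. The heart of the proof is then to check that $\sigma$, viewed as the automorphism of $TV$ acting by $\sigma^{\otimes r}$ in degree $r$, carries $J$ onto the relation space $J'$ of $D({}^{\sigma}\sfw,\sfp')$; it will then descend to the desired isomorphism $TV/(J)\isoto TV/(J')$.

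For the computation I would expand $\sfw=\sum_j x_j\otimes\sfw_j$ and apply ${}^{\sigma}(-)=\id\otimes\sigma\otimes\cdots\otimes\sigma^m$, using the factorization $\sigma\otimes\cdots\otimes\sigma^m=\sigma^{\otimes m}\circ(\id\otimes\sigma\otimes\cdots\otimes\sigma^{m-1})$, to obtain ${}^{\sigma}\sfw=\sum_j x_j\otimes\sigma^{\otimes m}({}^{\sigma}\sfw_j)$ and hence $\pd_j{}^{\sigma}\sfw=\sigma^{\otimes m}({}^{\sigma}\sfw_j)=\sigma({}^{\sigma}\sfw_j)$. This already matches the degree-$m$ relations of the two algebras term by term. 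The same bookkeeping on $V^{\otimes(m+1)}$ gives ${}^{\sigma}(x_j\Omega)=x_j\widetilde\Omega$ and ${}^{\sigma}(\Omega x_j)=s_j^m\,({}^{\sigma}\Omega)\,x_j$, where $\widetilde\Omega:=\pd_1{}^{\sigma}\sfw=\sigma^{\otimes m}({}^{\sigma}\Omega)$, so that
\begin{equation*}
{}^{\sigma}\bigl([x_j,\Omega]_{p_j}\bigr)=x_j\widetilde\Omega-p_js_j^m\,({}^{\sigma}\Omega)\,x_j .
\end{equation*}

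The remaining input is that $\widetilde\Omega$ is a $\sigma^{\otimes m}$-eigenvector. By \Cref{le.scale-w}, $\sigma^{\otimes(m+1)}(\sfw)=\hdet_A(\sigma)\,\sfw$; comparing, for each $j$, the part of this identity whose first tensor factor is $x_j$ yields $\sigma^{\otimes m}(\sfw_j)=\hdet_A(\sigma)\,s_j^{-1}\sfw_j$. In particular every monomial occurring in $\Omega=\sfw_1$ carries the same $s$-weight $\hdet_A(\sigma)/s_1$, and since $\widetilde\Omega$ has the same monomial support as $\Omega$ this gives $\sigma^{\otimes m}(\widetilde\Omega)=\hdet_A(\sigma)\,s_1^{-1}\widetilde\Omega$. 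Applying $\sigma=\sigma^{\otimes(m+1)}$ to the displayed relation and substituting $\sigma^{\otimes m}({}^{\sigma}\Omega)=\widetilde\Omega$ then produces
\begin{equation*}
\sigma\bigl({}^{\sigma}[x_j,\Omega]_{p_j}\bigr)=\frac{s_j\hdet_A(\sigma)}{s_1}\Bigl(x_j\widetilde\Omega-\frac{p_js_1s_j^m}{\hdet_A(\sigma)}\,\widetilde\Omega x_j\Bigr)=\frac{s_j\hdet_A(\sigma)}{s_1}\,[x_j,\widetilde\Omega]_{p'_j},
\end{equation*}
with $p'_j=p_js_1s_j^m(\hdet_A(\sigma))^{-1}$, exactly the asserted value. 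Hence $\sigma(J)=J'$ and $\sigma$ descends to the isomorphism ${}^{\sigma}D(\sfw,\sfp)\cong D({}^{\sigma}\sfw,\sfp')$.

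I expect the degree-$(m+1)$ bookkeeping to be the only delicate point: identifying $\widetilde\Omega=\pd_1{}^{\sigma}\sfw$ as a $\sigma^{\otimes m}$-eigenvector is what forces $\hdet_A(\sigma)$ into the formula for $\sfp'$, and it is the sole place where the superpotential structure (through \Cref{le.scale-w}) genuinely enters; the degree-$m$ matching and everything else is routine tracking of the diagonal scalars $s_i$ through ${}^{\sigma}(-)$ and $\sigma$. I would also note that only the entries $p'_j$ with $j\ge 2$ occur in the relations, just as $p_1$ is irrelevant for $D(\sfw,\sfp)$, so the uniform formula for $p'_i$ is well defined.
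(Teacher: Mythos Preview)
Your proof is correct and follows essentially the same route as the paper's: both invoke \Cref{pr.aut.lift} to lift $\sigma$, both use \Cref{le.scale-w} to obtain $\sigma^{\otimes m}(f_j)=s_j^{-1}\hdet_A(\sigma)\,f_j$, and both then verify that the ${}^{\sigma}$-twisted relations of $D(\sfw,\sfp)$ agree, up to nonzero scalars, with the defining relations of $D({}^{\sigma}\sfw,\sfp')$. The only cosmetic difference is packaging: you frame the conclusion as ``$\sigma$ carries $J$ onto $J'$'' and descend $\sigma$ to an isomorphism, whereas the paper computes directly that ${}^{\sigma}[x_i,\pd_1\sfw]_{p_i}=[x_i,\pd_1{}^{\sigma}\sfw]_{p'_i}$ and $\pd_i{}^{\sigma}\sfw=s_i^{-1}\hdet_A(\sigma)\,{}^{\sigma}f_i$, so that $J=J'$ and the two algebras are literally the same quotient of $TV$. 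Since every relation involved is a $\sigma^{\otimes r}$-eigenvector (same monomial support, uniform $s$-weight), your extra application of $\sigma$ merely rescales and the two framings coincide.
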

\begin{proof}
	As observed in \cref{pr.aut.lift} and its proof, $\sigma$ lifts to $D$, and $\sigma^{\otimes m}$ scales each $f_{i}$ in $V^{\otimes m}$. 
	The algebra ${}^{\sigma}D(\sfw,\sfp)$ is defined by the relations ${}^{\sigma}f_{i}={}^{\sigma}[x_{i},\partial_{1}\sfw]_{p_{i}}=0$ ($i=2,\ldots,n$). By \cite[Thm.~5.5]{MS16}, ${}^{\sigma}A(\sfw)=A({}^{\sigma}\sfw)$. Note that $\partial_{i}{}^{\sigma}\sfw=s_{i}^{-1}(\hdet_{A}(\sigma))({}^{\sigma}f_{i})$. By \cref{le.scale-w},
	\begin{equation*}
		(\hdet_{A}(\sigma))\sfw=\sigma^{\otimes(m+1)}(\sfw)=\sigma^{\otimes(m+1)}(x_{1}f_{1}+\cdots+x_{n}f_{n})=s_{1}x_{1}\sigma^{\otimes m}(f_{1})+\cdots+s_{n}x_{n}\sigma^{\otimes m}(f_{n})
	\end{equation*}
	and hence $\sigma^{\otimes m}(f_{i})=s_{i}^{-1}(\hdet_{A}(\sigma))f_{i}$. Therefore
	\begin{equation*}
		{}^{\sigma}[x_{i},\partial_{1}\sfw]_{p_{i}}=x_{i}{}^{\sigma}(\sigma^{\otimes m}(f_{1}))-p_{i}{}^{\sigma}f_{1}\sigma^{m}(x_{i})=s_{i}^{-1}(\hdet_{A}(\sigma))x_{i}{}^{\sigma}(f_{1})-p_{i}{}^{\sigma}f_{1}\sigma^{m}(x_{i})=[x_{i},\partial_{1}{}^{\sigma}\sfw]_{p'_{i}}.
	\end{equation*}
	The defining ideal of ${}^{\sigma}D(\sfw,\sfp)$ is generated by $\partial_{i}{}^{\sigma}\sfw$ and $[x_{i},\partial_{1}{}^{\sigma}\sfw]_{p'_{i}}$ ($i=2,\ldots,n$). 
\end{proof}

\section{Proof of the main theorem}
\label{sec.proof}

In this section we prove \Cref{th.3-stg}. Let $A=A(\sfw)$ and $D=D(\sfw,\sfp)$ be the algebras as in the theorem.

\subsection{}

In this subsection we show that \cref{item.reg} $\Rightarrow$ \cref{item.eigenvec} $\Leftrightarrow$ \cref{item.param} $\Rightarrow$  \cref{eq:sk}.

The proof that \cref{item.reg} $\Rightarrow$ \cref{item.eigenvec} also shows that $\Omega$ is a regular element in $D(\sfw,\sfp)$. 

\cref{item.reg} $\Rightarrow$ \cref{item.eigenvec}
We first show that the relations \cref{eq:d-rels} form a minimal set of relations 
for $D(\sfw,\sfp)$. Suppose not. Since $\pd_{1}\sfw,\ldots,\pd_{n}\sfw$ are linearly independent (see \cref{subsec.not.cycder}), 
there are scalars $a_{i},b_{ij},c_{ij}\in\Bbbk$, not all zero, such that
\begin{equation*}
	\sum_{i=2}^{n}a_{i}[x_{i},\pd_{1}\sfw]_{p_{i}}\, + \, 
	\sum_{i=2}^{n}\sum_{j=1}^{n}\big(b_{ij}x_{j}(\pd_{i}\sfw)\,+\,c_{ij}(\pd_{i}\sfw)x_{j}\big)\;=\; 0
\end{equation*}
in $TV$. In other words, 
\begin{equation}\label{eq.lincomb}
	\sum_{i=2}^{n}a_{i}x_{i}(\pd_{1}\sfw)\,+\, \sum_{i=2}^{n}\sum_{j=1}^{n}b_{ij}x_{j}(\pd_{i}\sfw)
	\;=\; 
	\sum_{i=2}^{n}a_{i}p_{i}(\pd_{1}\sfw)x_{i}\,-\,\sum_{i=2}^{n}\sum_{j=1}^{n}c_{ij}(\pd_{i}\sfw)x_{j}.
\end{equation}
Let $R$ be the subspace of $TV$ spanned by $\pd_{1}\sfw,\ldots,\pd_{n}\sfw$. The 
left-hand side of \cref{eq.lincomb} belongs to $V \otimes R$ and the right-hand side of \cref{eq.lincomb} belongs to $R \otimes V$.
Since $(V\otimes R)\cap(R\otimes V)=\Bbbk\sfw$  by \cite[Prop.~2.12]{MS16}, the left-hand side of \cref{eq.lincomb}
is a scalar multiple of $\sfw=x_{1}(\pd_{1}\sfw)+\cdots+x_{n}(\pd_{n}\sfw)$. This implies that all $a_i=0$ and all $b_{ij}=0$.
Similarly, the right-hand side of \cref{eq.lincomb} is a scalar multiple of $\sfw=q_{1}(\pd_{1}\sfw)x_{1}+\cdots+q_{n}(\pd_{n}\sfw)x_{n}$ which
implies that all $a_i=0$ and all $c_{ij}=0$. This is a contradiction. 
Hence \cref{eq:d-rels} is a minimal set of relations.

Thus the minimal free resolution of ${}_{D}\Bbbk$ is of the form
\begin{equation*}
	0\to D(-2m-1)\to D(-2m)^{n}\to D(-m)^{n-1}\oplus D(-m-1)^{n-1}\to D(-1)^{n}\to D\to {}_{D}\Bbbk\to 0.
\end{equation*}
That of ${}_{A}\Bbbk$ is of the form
\begin{equation*}
	0\to A(-m-1)\to A(-m)^{n}\to A(-1)^{n}\to A\to{}_{A}\Bbbk\to 0.
\end{equation*}
These resolutions determine the Hilbert series of $D$ and $A$ and it is then easy to 
see that $h_{D}(t)=h_{A}(t)(1-t^{m})^{-1}$. Therefore by \cref{pr.enh}, $\Omega$ is a regular element of $D(\sfw,\sfp)$.
  
  Since $\Omega$ is regular, we can define an automorphism $\tau \in \Aut_{\sf gr}(D(\sfw,\sfp))$ by the formula in  \cref{eq:tau}. 
The argument in the proof of \cref{le.stg-cntr} shows that  $\tau(x_{i})=p_{i}^{-1}x_{i}$ for each $i=1,\ldots,n$. 
 Of course, $\tau$ can also be viewed as an element in $\GL(V)$ and, as observed in the proof of \cref{pr.aut.lift}, 
  $\sfw\in V^{\otimes(m+1)}$ and   $f_{1}\in V^{\otimes m}$ are eigenvectors for $\tau^{\otimes(m+1)}$
  and $\tau^{\otimes m}$, respectively. 
  Since $\tau(\Omega)=\Omega$, $\tau^{\otimes m}(f_1)=f_1$.  
  Since $\sfw=x_{1}f_{1}+\cdots+x_{n}f_{n}$, the  $\tau^{\otimes(m+1)}$-eigenvalue of $\sfw$ is the same as the $\tau$-eigenvalue of $x_{1}$. 
  Therefore   $\tau^{\otimes(m+1)}(\sfw)=p_{1}^{-1}\sfw =q_{1}^{-1}\sfw$. Since the automorphism $\varphi$ defined in \cref{th.3-stg}\cref{item.eigenvec} is the
  inverse of $\tau$,  \cref{item.eigenvec} holds.
 
\cref{item.param} $\Leftrightarrow$ \cref{item.eigenvec} 
  Since ${\sfw} = \sum_{i,j}x_i \sfM_{ij}x_j$, $\varphi^{\otimes(m+1)}$ scales $\sfw$ by $q_{1}$ if and only if it scales $x_i x_{l_{1}}\cdots x_{l_{m-1}} x_j$ by $q_{1}$ for all $i,j$ and all words $x_{l_{1}}\cdots x_{l_{m-1}}$ appearing in $\sfM_{ij}$. But $\varphi^{\otimes(m+1)}(x_i x_{l_{1}}\cdots x_{l_{m-1}} x_j)=p_{i}p_{j}p_{l_{1}}\cdots p_{l_{m-1}}x_i x_{l_{1}}\cdots x_{l_{m-1}} x_j$ by definition of $\varphi$.

  \cref{item.param} $\Rightarrow$ \cref{eq:sk}
  Fix $1\leq i,j\leq d$ and write
  $\sfM_{ij}=\sum_{l}a_{l}x_{l_{1}}\ldots x_{l_{m-1}}$ where every
  $a_{l}\neq 0$ and $l=(l_{1},\ldots,l_{m-1})$. As described in the
  proof of \cref{le.stg-cntr}, $[x_{j},\Omega]_{p_{j}}=0$ for each
  $j$. Hence
  \begin{equation}\label{EqCondInThm}
  [\Omega,\sfM_{ij}]_{q_{1}^{-1}p_{i}p_{j}}
  \;=\; \sum_{l}a_{l}[\Omega,x_{l_{1}}\ldots x_{l_{m-1}}]_{q_{1}^{-1}p_{i}p_{j}}
  \;=\; \Omega\sum_{l}a_{l}(1-q_{1}^{-1}p_{i}p_{j}p_{l_{1}}\ldots p_{l_{m-1}})x_{l_{1}}\ldots x_{l_{m-1}}.
  \end{equation}
  The right-most side is zero by \cref{item.param}.

\subsection{}
To complete the proof of \Cref{th.3-stg} we show that \cref{eq:sk} $\Rightarrow$  \cref{item.reg}.  The proof of this occupies the rest of \S\ref{sec.proof}.

First we identify a candidate for the minimal projective resolution of the trivial module ${}_D\Bbbk$. 
The minimal resolution of $\Bbbk$ as a left $A$-module is
\begin{equation}\label{eq:res-a}
	\begin{tikzcd}
          0\arrow[r] & A(-m-1) \ar[r, "\cdot {\sfx}^t"]&
          A(-m)^{n}\arrow[r,"\cdot \sfM"] & A(-1)^{n}\arrow[r,"\cdot {\bf
            x}"] & A\arrow[r] & \Bbbk\arrow[r] & 0.
	\end{tikzcd}
\end{equation}

By definition, the relations defining  $D$ are $f_i$ and
$g_i:=[x_i,\partial_1{\sfw}]_{p_i}$ for $2\le i\le n$.  We aggregate
them into two column vectors of size $2(n-1)\times 1$, namely
$$
  {\sfg}_l:=(q_1 p_2^{-1}g_2, \ldots,  q_1 p_{n}^{-1}g_{n}  \; | \;     q_2f_2, \ldots ,q_{n}f_{n})^t 
   \qquad \text{and} \qquad 
   {\sfg}_r:=(f_2 , \ldots , f_n  \; | \;  g_2, \ldots,  g_n)^t.
$$
The subscripts $l$ and $r$ denote ``left'' and ``right'' respectively. 
There are unique matrices $\sfM_l$ and $\sfM_r$ such that
\begin{equation}\label{eq:lr}
  {\sfx}^t \sfM_l \;=\; {\sfg}_l^t 
  \qquad \text{and} \qquad \sfM_r{\sfx} \;=\; {\sfg}_r. 
\end{equation}
Let $P_\hdot$ denote the sequence
\begin{equation}\label{eq:min-d}
  \begin{tikzcd}[auto,baseline=(current  bounding  box.center)]
    D(-2m-1) \ar[r, "\cdot {\sfx}^t"]& D(-2m)^{n}\arrow[r,"\cdot
    \sfM_l"]& D(-m)^{n-1}\oplus D(-m-1)^{n-1}\arrow[r,"\cdot \sfM_r"] &
    D(-1)^{n}\arrow[r,"\cdot {\sfx}"] & D
  \end{tikzcd}
\end{equation}
 of left $D$-modules and $D$-module homomorphisms.
Eventually, we will show that
  \begin{equation}\label{eq:main-seq}
    0\to P_{\hdot}\to \Bbbk\to 0
  \end{equation}
is a minimal resolution of the trivial $D$-module ${}_D\Bbbk$.

  As usual, when working with resolutions, we place $\Bbbk$ in homological degree $-1$ in \Cref{eq:res-a}, \Cref{eq:min-d}, and
   \Cref{eq:main-seq}.

We denote the homology of the complex
  \Cref{eq:main-seq} by $H_i$, $0\le i\le 4$. Each $H_i$ is a graded vector space; its homogeneous components will be denoted by
  $H_{i\ell}$. The dimension of   $H_{i\ell}$ will be denoted by $h_{i\ell}$ or $(h_i)_\ell$.

\subsection{Notation}\label{not.ind}
  Let $X$ be a matrix. For subsets $S$ and $T$ of the set of rows and
  vectors respectively, we denote by $X_{S,T}$ (or $X_{ST}$ if there
  is no danger of ambiguity) the matrix consisting of only those
  entries of $X$ whose row and column indices belong to $S$ and
  $T$. Moreover, if the sets $S$ and $T$ are one-sided intervals, we
  simply substitute the corresponding inequality for the set.

  For example, $X_{\ge 2,\le 6}$ is the sub-matrix of $X$
  consisting of entries $x_{ij}$ for $i\ge 2$ and $j\le 6$.

  We apply the same convention to row or column vectors, except that
  we adorn these with a single symbol as in ${\sfv}_{\ge 5}$ denoting
  the vector consisting of the entries of ${\sfv}$ with index $\ge 5$. 
  
  We use the symbol $\hdot$ for no condition at all,
  e.g., $X_{\hdot\hdot}=X$, ${\sfv}_{\hdot}={\sfv}$, and
  $X_{\ge 8,\hdot}$ is the matrix obtained from $X$ by deleting
  the first 7 rows.

  The hat indicates the negation of the condition in question (or the
  complement of the set). For instance, $X_{\hdot \widehat{j}}$ is
  the matrix obtained from $X$ by discarding its $j^{th}$ column.

We define 
  \begin{align*}
  J_h  & \;  :=\; \text{the matrix obtained by adding a row of $0$'s on top of $q_1 \mathrm{diag}(p_2^{-1},\ldots, p_{n}^{-1})$}\quad and
  \\
J_v  & \;  :=\; \text{the matrix obtained by adding a column of $0$'s to the left of $\mathrm{diag}(p_2\ldots p_n)$}.
\end{align*}

\subsection{}

The next lemma follows immediately from the definitions of $\sfM_l$ and $\sfM_r$ in \Cref{eq:lr}.
The horizontal and vertical bars in its statement split the respective matrices into blocks of equal size.

\begin{lemma}\label{le.mlr}
  The $2(n-1)\times n$ matrix $\sfM_r$ is
  \begin{equation*}
    \begin{pmatrix}[c]
      \sfM_{\widehat{1}\hdot}\\ \hline {\sfx}_{\ge 2} \sfM_{1\hdot}-f_1 J_v
    \end{pmatrix}
  \end{equation*}
  The $n\times 2(n-1)$ matrix $\sfM_l$ is
  \begin{equation*}
    \begin{pmatrix}[c|c]
      -\sfM_{\hdot 1}({\sfx}_{\ge 2})^t + f_1 J_h& \sfM_{\hdot \widehat{1}}
    \end{pmatrix}.
    \hfill   
  \end{equation*}
\end{lemma}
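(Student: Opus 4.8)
The plan is to read off the entries of $\sfM_l$ and $\sfM_r$ directly from their defining equations $\sfM_r\sfx=\sfg_r$ and $\sfx^t\sfM_l=\sfg_l^t$ in \cref{eq:lr}, using only the two identities $\sfM\sfx=\sff$ (equivalently $f_i=\sum_j\sfM_{ij}x_j$) and $\sfx^t\sfM=(Q\sff)^t$ (equivalently $\sum_i x_i\sfM_{ij}=q_jf_j$, since $Q=\diag(q_1,\ldots,q_n)$) recorded just before \cref{le.stg-cntr}, together with $\Omega=f_1$ and $g_i=x_if_1-p_if_1x_i$. Throughout I would work in the free algebra $TV$, where the asserted uniqueness of $\sfM_l$ and $\sfM_r$ is guaranteed by the direct-sum decomposition $V^{\otimes p}=\bigoplus_j V^{\otimes(p-1)}x_j=\bigoplus_j x_jV^{\otimes(p-1)}$ according to the last (respectively first) letter.

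For $\sfM_r$: its top $n-1$ rows must satisfy $(\text{top block})\sfx=(f_2,\ldots,f_n)^t$, and since $f_i=(\sfM\sfx)_i$, this block is $\sfM_{\widehat1\hdot}$ at once. For the bottom block I would expand $g_i=x_if_1-p_if_1x_i$: substituting $f_1=\sfM_{1\hdot}\sfx$ gives $x_if_1=(x_i\sfM_{1\hdot})\sfx$, while $p_if_1x_i$ already exposes its rightmost letter $x_i$ and so equals $(p_if_1e_i^t)\sfx$, where $e_i$ is the $i$-th standard basis vector. Hence the $(i,j)$-entry of the bottom block is $x_i\sfM_{1j}-\delta_{ij}p_if_1$, which is exactly the $(i,j)$-entry of $\sfx_{\ge2}\sfM_{1\hdot}-f_1J_v$ once one unwinds that $J_v$ carries $\diag(p_2,\ldots,p_n)$ with column indices shifted by its leading zero column.

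For $\sfM_l$: its right $n-1$ columns must satisfy $\sfx^t(\text{right block})=(q_2f_2,\ldots,q_nf_n)$, and since $q_jf_j=(\sfx^t\sfM)_j$, this block is $\sfM_{\hdot\widehat1}$. For the left block I would write $q_1p_i^{-1}g_i=q_1p_i^{-1}x_if_1-q_1f_1x_i$; here $q_1p_i^{-1}x_if_1$ exposes its leftmost letter $x_i$, hence equals $\sfx^t(q_1p_i^{-1}f_1e_i)$, while $q_1f_1x_i$ is handled by the dual identity $q_1f_1=\sfx^t\sfM_{\hdot1}$, giving $q_1f_1x_i=\sfx^t(\sfM_{\hdot1}x_i)$. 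Thus the column attached to $q_1p_i^{-1}g_i$ has $k$-entry $q_1p_i^{-1}\delta_{ik}f_1-\sfM_{k1}x_i$, matching $-\sfM_{\hdot1}(\sfx_{\ge2})^t+f_1J_h$.

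The computation presents no genuine obstacle; the two points that require care are purely bookkeeping. First, for each summand of $g_i$ one must decide whether its exposed boundary letter lies on the left (to be absorbed by $\sfx^t$ in the $\sfM_l$ computation) or on the right (to be absorbed by $\sfx$ in the $\sfM_r$ computation), and correspondingly invoke $\sfM\sfx=\sff$ or $\sfx^t\sfM=(Q\sff)^t$. Second, one must line up the index conventions for $J_h$ and $J_v$---their inserted zero row and zero column---with the $\{2,\ldots,n\}$ indexing of the $g_i$ block, which is what produces the Kronecker deltas concentrated on the shifted diagonal.
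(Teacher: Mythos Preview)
Your verification is correct and is exactly what the paper has in mind: the lemma is stated without proof there, with the remark that it ``follows immediately from the definitions of $\sfM_l$ and $\sfM_r$ in \Cref{eq:lr}.'' Your write-up simply unpacks that immediacy, using the identities $\sfM\sfx=\sff$ and $\sfx^t\sfM=(Q\sff)^t$ in precisely the way intended.
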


\begin{lemma}\label{le.cplx}
If condition \cref{eq:sk} in \Cref{th.3-stg} holds, then the diagram  $0\to P_{\hdot}\to \Bbbk\to 0$ in  \Cref{eq:min-d} is a complex.
\end{lemma}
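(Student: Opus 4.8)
The plan is to check that each pair of consecutive maps in the augmented sequence $0\to P_{\hdot}\to\Bbbk\to 0$ composes to zero. Reading left to right, and recalling that all maps are right multiplications by the indicated matrices (or by the column $\sfx$), this reduces to the four identities $\sfx^t\sfM_l=0$, $\sfM_l\sfM_r=0$, $\sfM_r\sfx=0$, and $\varepsilon\circ(\cdot\,\sfx)=0$ holding in $D=D(\sfw,\sfp)$, where $\varepsilon\colon D\to\Bbbk$ is the augmentation. The last identity is immediate: every entry of $\sfx$ has positive degree, so $\sfa\sfx$ has vanishing constant term for any row vector $\sfa$.

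The two outer compositions are formal consequences of the construction. By the defining equations \cref{eq:lr} we have $\sfx^t\sfM_l=\sfg_l^t$ and $\sfM_r\sfx=\sfg_r$ as identities in $TV$, hence in $D$. The entries of $\sfg_l$ and $\sfg_r$ are, up to nonzero scalars, exactly the defining relations $f_2,\ldots,f_n$ and $g_2,\ldots,g_n$ of $D$, all of which vanish in $D$ by construction. Hence $\sfx^t\sfM_l=0$ and $\sfM_r\sfx=0$ in $D$, which disposes of the first and third compositions.

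The substance of the lemma is the middle identity $\sfM_l\sfM_r=0$, and this is the step I expect to be the main obstacle. I would substitute the block descriptions of $\sfM_l$ and $\sfM_r$ from \Cref{le.mlr} and expand the $n\times n$ product into four terms. Two of these involve the products $(\sfx_{\ge 2})^t\sfM_{\widehat 1\hdot}$ and $\sfM_{\hdot\widehat 1}\sfx_{\ge 2}$, which I would rewrite using the structural identities $\sfM\sfx=\sff$ and $\sfx^t\sfM=(Q\sff)^t$ recorded before \Cref{le.stg-cntr}; after passing to $D$, where $\sff=(\Omega,0,\ldots,0)^t$ and, since $Q=\diag(q_1,\ldots,q_n)$, also $(Q\sff)^t=(q_1\Omega,0,\ldots,0)$, the cross terms arising from $\sfM_{\hdot 1}x_1\sfM_{1\hdot}$ cancel. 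The remaining two terms involve $J_h$ and $J_v$, and what is left is computed entry by entry. Because $J_h$ and $J_v$ single out the first row and first column, the cases $i=1$, $j=1$, and $i,j\ge 2$ must be treated separately; in each of them the $(i,j)$ entry reduces, after using the hypothesis $p_1=q_1$ to absorb the stray $q_1$ and $p_1$ factors along the first row and column, to a nonzero scalar multiple of $\Omega\sfM_{ij}-q_1^{-1}p_ip_j\sfM_{ij}\Omega=[\Omega,\sfM_{ij}]_{q_1^{-1}p_ip_j}$. Condition \cref{eq:sk} of \Cref{th.3-stg} is precisely the assertion that every such commutator vanishes in $D$, so $\sfM_l\sfM_r=0$ and the sequence is a complex. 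The only genuine difficulty is the index bookkeeping forced by the asymmetric shape of $J_h$ and $J_v$; once the first-row and first-column entries are checked to collapse to the $q_1^{-1}p_ip_j$-commutator via $p_1=q_1$, the conclusion is immediate.
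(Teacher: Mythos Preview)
Your proposal is correct and follows essentially the same route as the paper: the three outer compositions are dispatched exactly as you describe (via the defining identities $\sfx^t\sfM_l=\sfg_l^t$, $\sfM_r\sfx=\sfg_r$, and the augmentation), and the paper handles the middle composition $\sfM_l\sfM_r$ by the same block expansion based on \Cref{le.mlr}, the same use of $\sfM\sfx=\sff$ and $\sfx^t\sfM=(Q\sff)^t$ to rewrite the two ``$\sfx_{\ge 2}$'' products, and the same cancellation of the $\sfM_{i1}x_1\sfM_{1j}$ cross terms. The paper then records the resulting $(i,j)$ entry explicitly in the four cases $i=1=j$, $i=1\ne j$, $i\ne 1=j$, $i\ne 1\ne j$ and observes that each is a nonzero scalar multiple of $[\Omega,\sfM_{ij}]_{q_1^{-1}p_ip_j}$, exactly the endpoint you anticipate.
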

\begin{proof}
  The composition $P_1\to P_0\to \Bbbk$ is zero because, the image of the map $P_1 \to P_0=D$ is the maximal ideal $(x_1,\ldots,x_n)$.
The definitions of $\sfM_l$ and $\sfM_r$, respectively, imply that the compositions $P_4\to P_3 \to P_2$ and $P_2\to P_1 \to P_0$ are zero.
It remains to show that the composition
\begin{equation*}
  \begin{tikzcd}[auto,baseline=(current  bounding  box.center)]
    D(-2m)^{n}\arrow[r,"\cdot \sfM_l"]& D(-m)^{n-1}\oplus
    D(-m-1)^{n-1}\arrow[r,"\cdot \sfM_r"] & D(-1)^{n}
  \end{tikzcd}
\end{equation*}
is zero, i.e., that the entries of the $n\times n$ matrix
$\sfM_l\sfM_r\in M_n\big(V^{\otimes(2m-1)}\big)$ are zero in $D$.

To this end, we consider the product of the $i^{th}$ row of $\sfM_l$ by the $j^{th}$
column of $\sfM_r$.  The $i^{th}$ row of $\sfM_l$ is
\begin{equation}\label{eq:rw}
  \begin{pmatrix}[c|c]
    -\sfM_{i1}({\sfx}_{\ge 2})^t+ f_1(J_h)_{i\hdot} & \sfM_{i \widehat{1}}
  \end{pmatrix},
\end{equation}
and the $j^{th}$ column of $\sfM_r$ is
\begin{equation}\label{eq:clmn}
  \begin{pmatrix}
    \sfM_{\widehat{1}j}\\ \hline {\sfx}_{\ge 2}\sfM_{1j}-f_1(J_v)_{\hdot j}
  \end{pmatrix}.
\end{equation}
Since ${\sfx}^t \sfM=(Q{\sff})^t$, the product of the left-hand side of \Cref{eq:rw} and the top half of \Cref{eq:clmn} is
\begin{equation}\label{eq:1/2}
  -\sfM_{i1}(q_j f_j-x_1 \sfM_{1j}) + (1-\delta_{i1})q_1 p_i^{-1} f_1 \sfM_{ij}
\end{equation}
Since  $\sfM{\sfx}={\sff}$, the product of the right-hand half of \Cref{eq:rw} by the bottom half of \Cref{eq:clmn} is
\begin{equation}
  \label{eq:and1/2}
  (f_i-\sfM_{i1}x_1)\sfM_{1j}-(1-\delta_{1j})p_j \sfM_{ij}f_1.
\end{equation}
Upon adding \Cref{eq:1/2} and \Cref{eq:and1/2} the terms containing
$x_1$ cancel out, and the remaining sums, in the four cases
\begin{enumerate}
  \renewcommand{\labelenumi}{(\arabic{enumi})}
\item $i=1=j$;
\item $i=1\ne j$;
\item $i\ne 1=j$;
\item $i\ne 1\ne j$  
\end{enumerate}
precisely coincide with the respective expressions in the statement of
\Cref{th.3-stg}. The conclusion then follows from our assumption that
these vanish in $D$.
\end{proof}

\begin{remark}\label{re.blk}
  A simultaneous change of basis for the spaces spanned by $x_i$ and
  $f_i$ for $i\ge 2$ induces, as explained in \cite[$\S$2]{AS87}, a
  conjugation of the diagonal matrix $Q$ by a block diagonal scalar
  matrix of the form $T=\diag(1,T')$, where $T'$ is an $(n-1)\times (n-1)$ matrix. The resulting matrix $TQT^{-1}$ is then of the form
  $\diag(q_1,Q')$ for some $Q'\in M_{n-1}(\Bbbk)$.

  Under this basis change the above proof of \Cref{le.cplx} consists
  of an identity involving the entries of the lower right-hand corner
  $Q'$ of $Q$, and hence functions equally well for any diagonalizable
  $Q'$. But then, by Zariski continuity, it is valid for any $Q'$
  whatsoever. This justifies the remark in \S\ref{re.gen} that the proof of \Cref{th.3-stg} extends to the case $Q=\diag(1,Q')$.
\end{remark}

\subsection{Inductive argument}

Define $Z:=\{a \in D \;| \; a\Omega=0\}$ and $z_i:=\dim_\Bbbk(Z_i)$.

\begin{lemma}\label{le.zqq}
Suppose condition \cref{eq:sk} in \Cref{th.3-stg} holds. If
$(h_3)_{k-m} =  z_{k-2m-1} =  z_{k-2m}  =  0$,
then $(h_3)_{k}=0$.
\end{lemma}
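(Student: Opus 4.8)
The plan is to take a homogeneous cycle $\xi\in (P_3)_k=(D_{k-2m})^n$ representing a class in $(H_3)_k$ (so $\xi\sfM_l=0$) and to show it is a boundary, i.e. $\xi=a\sfx^t$ for some $a\in D_{k-2m-1}$. I would split $\xi$ into a ``principal part'' visible after reducing modulo $\Omega$, which I would control using the known exactness of the $A$-resolution \cref{eq:res-a}, and an ``$\Omega$-divisible correction'', which I would annihilate using the two numerical hypotheses $z_{k-2m-1}=z_{k-2m}=0$ and $(h_3)_{k-m}=0$. Throughout, $\tau$ denotes the scaling automorphism $x_i\mapsto p_i^{-1}x_i$ (defined independently of regularity), for which $\Omega w=\tau(w)\Omega$ holds for all $w\in D$ by normality of $\Omega$.

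For the principal part: reducing $\xi\sfM_l=0$ modulo $\Omega$ and using \Cref{le.mlr} (the $f_1$-terms vanish since $f_1=\Omega\equiv 0$ in $A$) gives $\bar\xi\sfM_{\hdot\widehat1}=0$ and $(\bar\xi\sfM_{\hdot1})(\sfx_{\ge 2})^t=0$ in $A$. Writing $s:=\bar\xi\sfM_{\hdot1}$, these say $\bar\xi\sfM=s\,e_1$ and $sx_j=0$ for $j\ge 2$; multiplying $\bar\xi\sfM=s\,e_1$ on the right by $\sfx$ and using $\sfM\sfx=\sff\equiv 0$ in $A$ yields $sx_1=0$ as well, so $sV=0$. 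The Gorenstein condition (and its right-hand version, valid by the left--right symmetry of AS-regularity) forces the socle of $A$ to vanish, whence $s=0$ and $\bar\xi\sfM=0$. Exactness of \cref{eq:res-a} at $A(-m)^n$ then gives $\bar\xi=\bar a\sfx^t$ for some $\bar a\in A_{k-2m-1}$; lifting $\bar a$ to $a\in D_{k-2m-1}$, every component of $\xi-a\sfx^t$ lies in $(\Omega)=D\Omega$, so I may write $(\xi-a\sfx^t)_i=\eta_i\Omega$ with $\eta_i\in D_{k-3m}$.

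For the correction term $\eta\Omega$: since $\sfx^t\sfM_l=\sfg_l^t$ by \cref{eq:lr} and every entry of $\sfg_l$ is a scalar multiple of a defining relation $f_i$ or $g_i$ ($i\ge 2$), we have $\sfx^t\sfM_l=0$ in $D$, hence $a\sfx^t\sfM_l=0$ and therefore $\eta\Omega\sfM_l=\xi\sfM_l=0$. Pushing $\Omega$ to the right via $\Omega\,(\sfM_l)_{ij}=\tau((\sfM_l)_{ij})\Omega$, this reads $(\eta\,\tau(\sfM_l))_j\,\Omega=0$, i.e. each $(\eta\,\tau(\sfM_l))_j\in Z$. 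By the degrees recorded in \Cref{le.mlr} the entries of $\sfM_l$ have degree $m$ (first block) or $m-1$ (second block), so these annihilator elements lie in degrees $k-2m$ and $k-2m-1$; the hypotheses $z_{k-2m}=z_{k-2m-1}=0$ force $\eta\,\tau(\sfM_l)=0$, and applying $\tau^{-1}$ gives $\tau^{-1}(\eta)\sfM_l=0$. Thus $\eta':=\tau^{-1}(\eta)$ is a cycle in $(P_3)_{k-m}$, and $(h_3)_{k-m}=0$ makes it a boundary, $\eta'=b\sfx^t$ with $b\in D_{k-3m-1}$. Unwinding, $\eta_i=\tau(b)p_i^{-1}x_i$, and using $x_i\Omega=p_i\Omega x_i$ I get $(\xi-a\sfx^t)_i=\eta_i\Omega=\tau(b)\Omega x_i$; hence $\xi=(a+\tau(b)\Omega)\sfx^t$ is a boundary and $(H_3)_k=0$.

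The main obstacle I expect is the careful bookkeeping of the normality twist $\tau$: the implication $\eta\Omega\sfM_l=0\Rightarrow\tau^{-1}(\eta)\sfM_l=0$ is precisely what converts an $\Omega$-divisible cycle in degree $k$ into a genuine cycle in degree $k-m$, and it is essential that the two blocks of $\sfM_l$ carry degrees $m$ and $m-1$ so that the resulting elements of $Z$ land exactly in the degrees $k-2m$ and $k-2m-1$ named in the statement. A secondary point to pin down is the vanishing $s=0$; I would argue it directly from the (two-sided) Gorenstein property rather than invoking domainhood of $A$, to stay within the hypotheses available at this stage of the induction.
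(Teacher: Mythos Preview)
Your argument is correct and follows the same two–stage strategy as the paper: reduce modulo $\Omega$ and use exactness of \Cref{eq:res-a} to peel off a boundary $a\sfx^t$, then show the $\Omega$-divisible remainder is itself a boundary by commuting $\Omega$ past $\sfM_l$ and invoking the hypotheses on $z$ and $(h_3)_{k-m}$.

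The only substantive difference is packaging. Where the paper uses condition \cref{eq:sk} directly to obtain the explicit identity
\[
\Omega\sfM_l \;=\; \diag(q_1,p_2,\ldots,p_n)\,\sfM_l\,\Omega\,\diag(p_2^{-1},\ldots,p_n^{-1},q_1^{-1}p_2,\ldots,q_1^{-1}p_n),
\]
so that the new cycle is $\sfy^t\diag(q_1,p_2,\ldots,p_n)$, you instead use the scaling automorphism $\tau$ globally, write $\Omega\sfM_l=\tau(\sfM_l)\Omega$, and produce the cycle $\tau^{-1}(\eta)$. Since $\tau(\sfM_l)=\diag(p_1,\ldots,p_n)\,\sfM_l\,Q$ for the same diagonal $Q$ (each entry of $\sfM_l$ is a $\tau$-eigenvector under \cref{eq:sk}/\cref{item.param}), the two computations are equivalent; your formulation is slightly more conceptual, the paper's slightly more explicit. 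One point worth tightening: the sentence ``$\Omega w=\tau(w)\Omega$ holds for all $w\in D$ by normality of $\Omega$'' is not quite the right justification---what you actually need (and have) is that $\tau$, defined on $V$, descends to an algebra automorphism of $D$, which follows from condition \cref{item.eigenvec} (already shown equivalent to the assumed \cref{eq:sk}); the relation on generators then propagates. Your socle argument for $s=0$ is fine and is exactly the injectivity of the leftmost map in \Cref{eq:res-a} that the paper invokes.
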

\begin{proof}
Let $  {\bf \varphi}^t = (\varphi_1,\ldots,\varphi_n)$   
be an element in $D_{k-2m}^d$ representing a class in $(H_3)_k$. Thus,  ${\bf \varphi}^t \sfM_l=0$ in $D$.

Let $\eta:=\varphi^{t}\sfM$. Because $f_1=\Omega=0$ in $A$, the expression for $\sfM_l$ in \Cref{le.mlr} implies that
\begin{equation*}
	\eta\begin{pmatrix}-(\sfx_{\geq 2})^{t} & \\ & 1_{n-1}\end{pmatrix}=0
\end{equation*}
in $A$. Hence $\eta_{1}(\sfx_{\geq 2})^{t}=0$ and $\eta_{\geq 2}=0$ in $A$. It also follows that
\begin{equation*}
	\eta_{1}x_{1} \;=\; \eta_{1}x_{1}+\eta_{\geq 2}\sfx_{\geq 2}  \;=\;  \varphi^{t}\sfM\sfx  \;=\;  0
\end{equation*}
in $A$. We have obtained $\eta_{1}\sfx^{t}=0$. The exact sequence \Cref{eq:res-a} ensures $\eta_{1}=0$, and hence $\varphi^{t}\sfM=0$ in $A$.

Again by the exactness of \Cref{eq:res-a}, ${\bf \varphi}^t$ is in the image of $\cdot{\sfx}^t$ modulo
$\Omega$, i.e.,
\begin{equation}\label{eq:phi}
  {\bf \varphi}^t \; = \; u{\sfx}^t + {\sfy}^t \Omega
\end{equation}
for some $u\in D_{k-2m-1}$ and a column vector ${\sfy}$ with entries
in $D_{k-3m}$. The equations \Cref{eq:lr} show that the first summand
of \Cref{eq:phi} is annihilated by right multiplication by $\sfM_l$, and
hence ${\sfy}^t\Omega \sfM_l=0$ in $D$.

Since $[\Omega, \sfM_{ij}]_{q_1^{-1}p_ip_j}=0$,  \Cref{le.mlr} implies that
$\Omega \sfM_l$ equals $\sfM_l\Omega$ up to multiplication on the left by an
 invertible $n\times n$ diagonal matrix and on the right by an
invertible diagonal matrix of size $2(n-1)$, namely
\begin{equation*}
  \Omega \sfM_{l} \; =\; \diag(q_{1},p_{2},\ldots,p_n)\, \sfM_{l}\Omega\,\diag(p_{2}^{-1},\ldots,p_n^{-1},q_{1}^{-1}p_{2},\ldots,q_{1}^{-1}p_n).
\end{equation*}
It follows that ${\sfy}^t\diag(q_{1},p_{2},\ldots,p_n)\sfM_l$ belongs
to $Z_{k-2m}^{d-1}\oplus Z_{k-2m-1}^{d-1}$ which is assumed to be
zero, and hence ${\sfy}^t\diag(q_{1},p_{2},\ldots,p_n)$ represents
a class in $(H_3)_{k-m}$ which is again assumed zero. In conclusion we
have ${\sfy}^t\diag(q_{1},p_{2},\ldots,p_n)=v{\sfx}^t$ for some
$v\in D(-2m-1)$ and hence, by \Cref{eq:phi},
\begin{equation*}
  {\bf \varphi}^t \;=\;  u{\sfx}^t + v{\sfx}^t\diag(q_{1}^{-1},p_{2}^{-1},\ldots,p_n^{-1})\Omega   \;=\; (u+ v\Omega){\sfx}^t.
\end{equation*}
In other words $\varphi^t$ is a boundary, so the homology $(H_3)_k$ vanishes, as desired.
\end{proof}

Now define
\begin{equation*}
 d_k \; :=\; \dim_\Bbbk(D_k), \qquad d'_k \; := \; \text{the coefficient of }t^k\text{ in } h_A(t) \left(1-t^m\right)^{-1}, 
  \qquad \text{and} \qquad e_k: = d'_k-d_k. 
\end{equation*}
It follows from the exact sequence $0 \to Z(-m) \to D(-m) \stackrel{\cdot \Omega}{\longrightarrow} D \to A \to 0$
that $e_k\ge 0$ (as noted in \Cref{eq:diff}) and 
\begin{equation}
  \label{eq:ez}
  e_k \;= \;  e_{k-m}+z_{k-m}. 
\end{equation}
for all $k$. 
Moreover, subtracting
\begin{equation*}
  -\delta_{k,0}+d_k-n d_{k-1}+(n-1)d_{k-m-1}+(n-1)d_{k-m}-nd_{k-2m}+d_{k-2m-1} \;=\;  (h_2)_k-(h_3)_k+(h_4)_k
\end{equation*}
from the same expression with $d'$  in place of $d$ (in which case the  right-hand side $=0$) we get
\begin{equation}\label{eq:e}
  e_k-n e_{k-1}+(n-1)e_{k-m-1}+(n-1)e_{k-m}-n e_{k-2m}+e_{k-2m-1} \;=\; -(h_2)_k+(h_3)_k-(h_4)_k.
\end{equation}

This is now sufficient preparation for

\begin{lemma}\label{le.ez0}
  Suppose condition \cref{eq:sk} in \Cref{th.3-stg} holds. Let $k$ be an integer. If  $e_i=0$ for all  $i\le k-1$ and $z_j=0$ for all  $j\le k-2m$, then $e_k=z_{k+1-2m}=0$.
\end{lemma}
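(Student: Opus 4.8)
The plan is to read $e_k$ off the Euler-characteristic identity \cref{eq:e}, after using the hypotheses to annihilate every lower-degree $e$-term, and then to pin down the right-hand side using non-negativity of homology together with the vanishing of $(h_3)_k$. The auxiliary claim $z_{k+1-2m}=0$ should come out essentially for free from the recursion \cref{eq:ez} alone.

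First I would dispose of $z_{k+1-2m}=0$, which needs no homology at all. Applying \cref{eq:ez} in degree $k+1-m$ gives $e_{k+1-m}=e_{k+1-2m}+z_{k+1-2m}$. Since $m\ge 2$ we have $k+1-m\le k-1$ and $k+1-2m\le k-1$, so both $e$-terms vanish by the hypothesis $e_i=0$ for $i\le k-1$; hence $z_{k+1-2m}=0$. Next I specialize \cref{eq:e} to degree $k$. The indices occurring on its left-hand side are $k-1,\,k-m,\,k-m-1,\,k-2m,\,k-2m-1$, all $\le k-1$ because $m\ge 2$, so every term except $e_k$ drops out and I obtain
\[
e_k \;=\; -(h_2)_k+(h_3)_k-(h_4)_k .
\]
Because each $(h_i)_k\ge 0$ this gives $e_k\le (h_3)_k$, while \cref{eq:ez} (with $e_{k-m}=0$) forces $e_k=z_{k-m}\ge 0$. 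Thus it will suffice to prove $(h_3)_k=0$: then $e_k=-(h_2)_k-(h_4)_k\le 0$ combines with $e_k\ge 0$ to give $e_k=0$ (and incidentally $(h_2)_k=(h_4)_k=0$).

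To reach $(h_3)_k=0$ I would instead prove the stronger statement that $(h_3)_j=0$ for all $j\le k$, by induction on $j$ driven by \Cref{le.zqq}. For $j<2m$ the module $P_3=D(-2m)^n$ vanishes in degree $j$, so $(h_3)_j=0$ is automatic; this is the base case. For $2m\le j\le k$ the inductive hypothesis supplies $(h_3)_{j-m}=0$, and the lemma's hypothesis $z_i=0$ for $i\le k-2m$ supplies $z_{j-2m}=z_{j-2m-1}=0$, since both indices are $\le k-2m$ once $j\le k$. \Cref{le.zqq} then yields $(h_3)_j=0$, and in particular $(h_3)_k=0$, closing the argument.

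The main obstacle is this middle step. The key realization is that one need not compute $(h_2)_k$ or $(h_4)_k$ at all: the sign of $e_k$ can be played directly against $(h_3)_k$. The only genuinely delicate bookkeeping is that $(h_3)_k$ is reachable solely by cascading \Cref{le.zqq} upward in steps of $m$ from the trivial range $j<2m$, and one must check that every $z$-index it invokes, namely $j-2m$ and $j-2m-1$ for $j\le k$, stays within the range $\le k-2m$ that the hypothesis controls. Verifying these inequalities (all of which hold because $m\ge 2$) is what makes the induction go through.
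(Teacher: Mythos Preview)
Your argument is correct and follows essentially the same route as the paper's proof: both use \Cref{le.zqq} inductively to kill $(h_3)_k$, then read $e_k=0$ off the Euler identity \cref{eq:e} via the sign squeeze $0\le e_k=-(h_2)_k-(h_4)_k\le 0$, and extract $z_{k+1-2m}=0$ from \cref{eq:ez}. The only cosmetic differences are that you handle $z_{k+1-2m}=0$ first rather than last, and you take the base case of the $(h_3)$-induction at $j<2m$ (where $P_3$ vanishes) rather than the paper's $j<0$; both choices are fine.
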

\begin{proof}
 By \Cref{le.zqq}, the vanishing of the $z_j$'s implies that $(h_3)_k=0$ 
  (by an induction argument since $(h_3)_\ell=0$ when  $\ell<0$). Equation \Cref{eq:e} then gives
  \begin{equation*}
    0 \; \le \;  e_k \,=\, -(h_2)_k-(h_4)_k \; \le \; 0
  \end{equation*}
  so $e_k =0$ and $ (h_2)_k+(h_4)_k =0$. 

Finally, $z_{k+1-2m}=0$ because the $e_i$'s vanish and $ e_{k+1-m} = e_{k+1-2m}+z_{k+1-2m}$ (by \Cref{eq:ez}).
\end{proof}

To complete the proof of \cref{th.3-stg} we must show that $D$ is Artin-Schelter regular of dimension 4.
By \Cref{pr.enh}, $D$ is Artin-Schelter regular of dimension 4 if $h_D(t) =h_A(t)\left(1-t^m\right)^{-1}$, i.e., if $e_k=0$ for all $k$. 
Certainly, $e_k=z_k=0$ if $k<0$;  \Cref{le.ez0} then provides the induction step thereby showing that the vanishing 
of $e_k$ and $z_k$ perpetuates for all $k \ge 0$. 

The proof of \cref{th.3-stg} is now complete.

\section{Examples}
\label{sect.examples}

In this section we apply \Cref{th.3-stg} to some important classes of 3-dimensional Artin-Schelter regular algebras thereby producing some
new 4-dimensional Artin-Schelter regular algebras.

\subsection{The Calabi-Yau case}

As observed in \Cref{cor.cy}, if $A(\sfw)$ is 3-Calabi-Yau, then $\sfp=(1,\ldots,1)$ is good so the procedure in \cref{th.3-stg}
produces at least one 4-Calabi-Yau algebra that is a normal extension of $A(\sfw)$. It is, in fact, a  central extension. 
The 3-Calabi-Yau algebras of finite Gelfand-Kirillov dimension have been classified by 
Mori-Smith \cite{ms-cy} and Mori-Ueyama \cite{mu-cy}.
They are noetherian domains.

\begin{proposition}\label{pr.cy-gk}
For all the 3-Calabi-Yau algebras $A$ classified in \cite[Table 2]{ms-cy}
and \cite[Table 2]{mu-cy} and all numberings of the generators, the procedure 
 described in \Cref{cor.cy} produces a noetherian $4$-dimensional Artin-Schelter regular 
  central extension of $A$.  
\end{proposition}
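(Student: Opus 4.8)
The plan is to assemble the results already established, observing that the algebras in question are exactly $3$-Calabi-Yau and hence fall squarely within the scope of \Cref{cor.cy}. First I would recall that every algebra $A$ listed in \cite[Table 2]{ms-cy} or \cite[Table 2]{mu-cy} is by construction a $3$-Calabi-Yau algebra of finite Gelfand-Kirillov dimension; in particular it is $m$-Koszul (\S\ref{ssect.mKoszul}) and, by \Cref{thm.DV}, has the form $A=A(\sfw)$ for some $\sfw\in V^{\otimes(m+1)}$. Moreover, as noted just before \Cref{cor.cy}, \cite[Cor.~4.5]{MS16} shows that $A(\sfw)$ is $3$-Calabi-Yau if and only if $\sfw$ is a superpotential, i.e.\ if and only if $Q=\id_V$ in the notation of \S\ref{def.stg}. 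Thus for each of these algebras we may take $Q=\id_V$, so that all the diagonal entries $q_i$ equal $1$.

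Next I would verify that the all-ones tuple is good for every numbering. Fixing any index $k$ to single out---this is precisely what varying the numbering of the generators amounts to---the relevant entry $q_k$ of $Q=\id_V$ equals $1$, and the automorphism $\varphi=\id_V$ visibly satisfies $\varphi^{\otimes(m+1)}(\sfw)=\sfw=q_k\sfw$. Hence $\sfp=(1,\ldots,1)$ satisfies condition \cref{item.eigenvec} of \Cref{th.3-stg}; that is, $\sfp$ is good, and this holds irrespective of the chosen $k$. By \Cref{th.3-stg}, in the form allowing an arbitrary singled-out index $k$ recorded in \S\ref{re.gen}(2), the algebra $D=D(\sfw,\sfp)$ is $4$-dimensional Artin-Schelter regular, and exactly as in \Cref{cor.cy} the image $\Omega=\partial_k\sfw$ is a central regular element with $A\cong D/(\Omega)$; thus $D$ is a central extension of $A$.

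It then remains only to record noetherianity. Since $A$ has finite Gelfand-Kirillov dimension it is noetherian (indeed a noetherian domain, as already noted from the classification). Because $\Omega$ is a homogeneous normal regular element of degree $m\ge 2$ with $A=D/(\Omega)$, \Cref{le.growth}\cref{item.growth.noeth} applies and shows that $D$ is noetherian precisely because $A$ is. Combining the previous paragraph with this, $D$ is a noetherian $4$-dimensional Artin-Schelter regular central extension of $A$, as asserted.

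I do not anticipate a genuine obstacle: the statement is a direct application of \Cref{cor.cy} and \Cref{le.growth} once one observes that the defining data of a $3$-Calabi-Yau algebra force $Q=\id_V$. The only point meriting care is the clause ``all numberings of the generators,'' which is handled by noting that goodness of $(1,\ldots,1)$ is insensitive to the choice of singled-out generator precisely because every $q_i$ equals $1$; this uniformity is exactly the feature distinguishing the Calabi-Yau case from the general Artin-Schelter regular case treated later in \S\ref{sect.examples}, where the numbering genuinely matters.
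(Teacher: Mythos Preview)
Your proposal is correct and matches the paper's intended reasoning: the paper gives no explicit proof of this proposition, treating it as an immediate consequence of \Cref{cor.cy} together with the noetherianity transfer of \Cref{le.growth}\cref{item.growth.noeth}, which is exactly what you assemble. Your handling of the ``all numberings'' clause via the observation that $Q=\id_V$ makes every choice of $k$ admissible is the right point to make explicit.
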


Non-noetherian Calabi-Yau algebras are less well understood, but they do exist
and a number of them fit into our framework. 
For example,   \cite{e}
constructs, usually non-noetherian, $3$-Calabi-Yau algebras from
superpotentials ${\sfw}$ defined in terms of combinatorial objects
known as Steiner systems. The idea for using Steiner (triple)
systems to construct 3-Calabi-Yau algebras is due to Mariano
Su\'arez-Alvarez \cite{SA11}. The smallest Steiner triple system is
provided by the points and lines in the Fano plane, $\PP^2_{\FF_2}$, and the
3-Calabi-Yau algebra in that case is the one studied by Smith in
\cite{SmG2}.

\begin{proposition}\label{pr.cy-ngk}
  The $3$-Calabi-Yau algebras described in the theorem in the introduction of \cite{e}
satisfy the hypotheses of \Cref{cor.cy} and therefore admit
  $4$-dimensional Artin-Schelter regular central extensions.
\end{proposition}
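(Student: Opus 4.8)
The plan is to check that each algebra $A$ produced in \cite{e} satisfies the hypotheses of \Cref{cor.cy} verbatim, so that the corollary immediately yields the desired $4$-dimensional Artin-Schelter regular central extension of $A$ by $\Omega=\pd_1\sfw$. Those hypotheses amount to two assertions: that $A$ is of the form $A(\sfw)$ for a superpotential $\sfw$, and that $A$ is $3$-Calabi-Yau (generation in degree one being automatic under the standing conventions of \S\ref{se.intro}). Both are part of the package established in \cite{e}, so the proof is a matter of matching notation rather than new computation.

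First I would recall the shape of the construction in \cite{e}: starting from a Steiner system on the index set $\{1,\ldots,n\}$ one writes down an explicit cubic element $\sfw\in V^{\otimes 3}$, and the algebra under consideration is, by definition, the derivation-quotient algebra $A(\sfw)=TV/(\pd_1\sfw,\ldots,\pd_n\sfw)$. This exhibits $A$ as $A(\sfw)$ on the nose, with relations concentrated in degree $m=2$; in particular $A$ is generated by its degree-one component $V$, and the Betti numbers $(1,n,n,1)$ are exactly those of the general $m=2$ case. Next I would cite the main theorem of \cite{e} for the fact that $A(\sfw)$ is $3$-Calabi-Yau. With $A=A(\sfw)$ being $3$-Calabi-Yau and generated in degree one, the hypotheses of \Cref{cor.cy} hold, and the corollary supplies the central extension (of type $(1,n,2n-2,n,1)$, as predicted in \S\ref{sect.1331}).

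The only point that requires a moment's care, and the closest thing to an obstacle, is that $\sfw$ be a genuine (untwisted) superpotential, i.e. that the twist $Q$ equal $\id_V$, since \Cref{cor.cy} builds in the plain commutators $[x_i,\pd_1\sfw]$ corresponding to $\sfp=(1,\ldots,1)$. This is not a real difficulty: by the characterization recorded above (namely \cite[Cor.~4.5]{MS16}), for the algebras at hand $\sfw$ is an untwisted superpotential precisely because $A(\sfw)$ is $3$-Calabi-Yau; equivalently, the full cyclic invariance of the Steiner superpotential can be read off directly from its combinatorial definition. Either way $Q=\id_V$, so $\sfp=(1,\ldots,1)$ is good, and invoking \Cref{cor.cy} completes the argument.
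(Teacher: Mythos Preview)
Your proposal is correct and matches the paper's approach: the paper gives no explicit proof of this proposition, treating it as immediate from the fact that the algebras in \cite{e} are, by construction, $3$-Calabi-Yau derivation-quotient algebras $A(\sfw)$ and hence fall directly under \Cref{cor.cy}. Your write-up simply spells out that verification.

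One small inaccuracy worth flagging: you assert that $\sfw\in V^{\otimes 3}$ and $m=2$. The thesis \cite{e} treats general Steiner systems, not only Steiner \emph{triple} systems, so the superpotential need not be cubic and the relations need not be quadratic. This does not affect your argument, since \Cref{cor.cy} applies for any $m$, but you should remove the specific claim about the degree (or restrict it to the triple-system case).
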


\subsection{Generic cases with finite Gelfand-Kirillov dimension}

The generic $3$-dimensional Artin-Schelter  regular algebras of finite Gelfand-Kirillov dimension are listed in \cite[Tables (3.9) and
(3.11)]{AS87}.  In the terminology of \cref{sect.1331}, they have  type $(1221)$ or $(1331)$.

As remarked after \Cref{th.3-stg}, there is a version of \cref{cor.cy} in which the relations for $D(\sfw,\sfp)$ 
are defined by using $f_{k}$ in place of $f_{1}$. If we do that we set $p_{k}:=q_{k}$ instead of $p_{1}:=q_{1}$. 

Let $\zeta_{n}$ be a primitive $n^{th}$ root of unity.

\begin{proposition}\label{pr.as-reg}
For each 3-dimensional regular algebra $A$ in Tables (3.9) and (3.11) of \cite{AS87} and 
each index $k=1,\ldots,n$ of the relation $\pd_k\sfw$ for $A(\sfw)=A$ that we omit when defining $D(\sfw,\sfp)$,
\cref{table-cubic,table-quadratic} list all tuples ${\sfp}=(p_1,\ldots,p_n)$ that satisfy  condition \cref{item.eigenvec} in \cref{th.3-stg}. 
For each such ${\sfp}$, $D(\sfw,{\sfp})$ is a 4-dimensional Artin-Schelter regular normal extension of $A$.
(In \cref{table-cubic,table-quadratic}, $\l$ is an arbitrary non-zero scalar.)
\end{proposition}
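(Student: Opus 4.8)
The plan is to reduce the proposition entirely to solving condition \cref{item.eigenvec} of \Cref{th.3-stg} for each algebra. By the version of \Cref{th.3-stg} in which an arbitrary relation $\pd_k\sfw$ is omitted (see (2) in \S\ref{re.gen} and \Cref{re.blk}), any tuple $\sfp$ with $p_k=q_k$ that satisfies \cref{item.eigenvec} automatically makes $D(\sfw,\sfp)$ a $4$-dimensional Artin-Schelter regular normal extension of $A$. Thus the only thing to prove is that, for each algebra and each omitted index $k$, the tuples listed in \cref{table-cubic,table-quadratic} are exactly the solutions of \cref{item.eigenvec}; the regularity assertion is then immediate.

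First I would recast \cref{item.eigenvec} in purely combinatorial terms. The diagonal automorphism $\varphi(x_i)=p_ix_i$ acts on a monomial $x_{i_0}x_{i_1}\cdots x_{i_m}\in V^{\otimes(m+1)}$ by the scalar $p_{i_0}p_{i_1}\cdots p_{i_m}$, so $\varphi^{\otimes(m+1)}(\sfw)=q_k\sfw$ holds if and only if every monomial occurring in $\sfw$ satisfies $p_{i_0}p_{i_1}\cdots p_{i_m}=q_k$. Accordingly, for each algebra I would read off its superpotential from $\sfw=\sfx^t\sfM\sfx$ and its diagonal twisting matrix $Q=\diag(q_1,\ldots,q_n)$ from $\sfx^t\sfM=(Q\sff)^t$, and then write down the resulting finite system of multiplicative equations $p_{i_0}\cdots p_{i_m}=q_k$ together with the normalization $p_k=q_k$.

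Solving such a system is routine once it is written down. Taking differences of the exponent vectors of pairs of monomials of $\sfw$ turns it into the homogeneous conditions $\prod_i p_i^{a_i-b_i}=1$ plus one normalizing equation, a linear system in the $p_i$ viewed multiplicatively. Its solution set is either finite or a one-parameter family over $\Bbbk^\times$; the latter is exactly where the free scalar $\l$ in the tables comes from. For the type $(1331)$ algebras ($n=3$, $m=2$, $\sfw\in V^{\otimes 3}$) and the type $(1221)$ algebras ($n=2$, $m=3$, $\sfw\in V^{\otimes 4}$) the monomial content of $\sfw$ is short enough that each system can be solved by inspection, and I would cross-check the solution set against the corresponding row of \cref{table-cubic} or \cref{table-quadratic}.

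The principal difficulty is organizational rather than conceptual: one must carry out this computation for every entry of Tables 3.9 and 3.11 of \cite{AS87} and for each choice of omitted index $k$, correctly tracking the generic parameters of the Artin-Schelter classification and the regularity constraints they obey, and confirming both that each listed tuple solves \cref{item.eigenvec} and that no solutions are missing. I would organize the work by type, and for each family verify that the relevant entry $q_k$ of $Q$ splits off diagonally so that \Cref{th.3-stg} (via \Cref{re.blk}) genuinely applies before recording the solutions in the tables.
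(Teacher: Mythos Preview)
Your proposal is correct and is exactly the approach the paper (implicitly) takes: the paper offers no separate proof of this proposition, the tables themselves being the record of the direct computation you describe, namely solving the monomial-by-monomial system $p_{i_0}\cdots p_{i_m}=q_k$ coming from condition \cref{item.eigenvec} with the normalization $p_k=q_k$. Your write-up simply makes explicit the routine verification the paper leaves to the reader.
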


\subsection{}

In \cite{LPWZ}, four classes of 4-dimensional Artin-Schelter regular algebras of type
$(12221)$ were discovered. They are labeled by $A(p)$, $B(p)$, $C(p)$,
and $D(v,p)$, with parameters $0\neq p\in \Bbbk$ and $v\in \Bbbk$. 
If the field $\Bbbk$ is algebraically closed, then these algebras are {\it all} the noetherian regular algebras of type $(12221)$ that satisfy the $(m_{2},m_{3})$-\emph{generic} condition  
in \cite{LPWZ}; roughly speaking, these algebras are generic in terms of the $A_{\infty}$-structure on their 
Yoneda Ext-algebras. 
In \cite{LPWZ}, the regularity of the algebras $A(p)$, $B(p)$, $C(p)$ was proved in a computational way using Bergman's diamond lemma
\cite{bgm}, and that of $D(v,p)$ was deduced from the fact that it is an Ore extension of a skew polynomial ring. Among these algebras, all $A(p)$ and $D(v,p)$ appear as $D(\sfw,\sfp)$ in \cref{th.3-stg} so
their regularity may also be proved as a consequence \cref{pr.as-reg}.

\begin{table}
\begin{center}
\begin{tabular}{|c|c|c|c|c|}
\hline
Type & $\vphantom{\Big\vert}(q_1,q_2)$ & $k=1$ & $k=2$
\\
\hline
$\vphantom{\Big\vert}A$ & $(1,1)$ & $(1,\pm 1)$ & $(\pm 1,1)$
\\
$\vphantom{\Big\vert}E$ & $(1,\zeta_3)$ & $(1,\zeta_3^r),\;r=0,1,2$ & $(\zeta_3,\zeta_3)$
\\
$\vphantom{\Big\vert}H$ & $(\zeta_8,-\zeta_8)$ & none & none
\\
$\vphantom{\Big\vert}S_1$ & $(\a,\a^{-1})$ & $(\a,\a^{-1/2})$ & $(\a^{1/2},\a^{-1})$
\\
$\vphantom{\Big\vert}S_2$ & $(\a,-\a^{-1})$ & $(\a,\a^{-1/2})$ & $((-\a)^{1/2},-\a^{-1})$
\\
$\vphantom{\Big\vert}S_2'$ & $(1,-1)$ & $(1,\pm 1)$ & none
\\
\hline
\end{tabular}
\end{center}
\vskip .12in
\caption{Good $(p_1,p_2)$ for the generic cubic AS-regular algebras in \cite[Table (3.9)]{AS87}}
\label{table-cubic}
\end{table}

\begin{proposition}\leavevmode
	\begin{enumerate}
        \item Consider the 3-dimensional cubic regular algebra of type
          $S_{2}$ in Table (3.9) of \cite{AS87}. Then the normal
          extension at $k=2$ with $p_{1}=(-\alpha)^{1/2}$ is the
          algebra $A(-p_{1})$ defined in \cite[Thm.~A]{LPWZ}. The
          normal extension at $k=1$ with $p_{2}=\alpha^{-1/2}$ is
          isomorphic to $A(-p_{2})$ after interchanging the variables $x$
          and $y$.
        \item Consider the 3-dimensional cubic regular algebra of type
          $S_{1}$ in Table (3.9) of \cite{AS87}. Then the normal
          extension at $k=2$ with $p_{1}=\alpha^{1/2}$ is the algebra
          $D(a,-p_{1})$ defined in \cite[Thm.~A]{LPWZ}. The normal
          extension at $k=1$ with $p_{2}=\alpha^{-1/2}$ is isomorphic
          to $D(-ap_{2}^{2},-p_{2})$ after interchanging the variables
          $x$ and $y$.
	\end{enumerate}
\end{proposition}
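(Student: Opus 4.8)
The plan is to reduce both parts to an explicit comparison of presentations. Each algebra $D(\sfw,\sfp)$ appearing here is generated by $x_1,x_2$ with exactly one cubic and one quartic relation, and \cite[Thm.~A]{LPWZ} presents $A(p)$ and $D(v,p)$ in the same shape, so it suffices to write both presentations out and produce a linear change of variables identifying them. The key simplification is that I need not build the superpotential separately: if $f_1,f_2\in V^{\otimes 3}$ denote the defining relations of the cubic type-$S_2$ (resp.\ type-$S_1$) algebra listed in \cite[Table~(3.9)]{AS87}, then $\sfw:=x_1f_1+x_2f_2$ is the associated $Q$-twisted superpotential (the twist-invariance being exactly the identity $\sfx^t\sfM=(Q\sff)^t$ built into the \cite{AS87} presentation), and $\pd_1\sfw=f_1$, $\pd_2\sfw=f_2$ hold automatically, with $Q=\diag(\alpha,-\alpha^{-1})$ (resp.\ $\diag(\alpha,\alpha^{-1})$).

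For the $k=2$ extensions, $\Omega=\pd_2\sfw=f_2$ and, by the generalized version of \Cref{le.stg-cntr} (see \S\ref{re.gen}), $x_2\Omega=q_2\Omega x_2$, so the defining relations of $D(\sfw,\sfp)$ are the cubic $f_1=0$ together with the single quartic $[x_1,f_2]_{p_1}=x_1f_2-p_1f_2x_1=0$, where $p_1=(-\alpha)^{1/2}$ (resp.\ $\alpha^{1/2}$) is the value from \cref{table-cubic}. I would then quote the cubic and quartic relations of $A(p)$ (resp.\ $D(v,p)$) from \cite[Thm.~A]{LPWZ} and match them: comparing the cubic relations fixes a rescaling of $x_1,x_2$ and forces the parameter identification $p=-p_1$ (and, for $S_1$, identifies the auxiliary scalar in $f_1,f_2$ with $v=a$). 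Under that same substitution I would verify that the quartic $[x_1,f_2]_{p_1}=0$ goes over to the quartic relation of $A(p)$ (resp.\ $D(v,p)$); since $p_1$ is the only scalar entering the quartic, this last step is a finite coefficient check.

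The $k=1$ statements follow by symmetry. There $\Omega=\pd_1\sfw=f_1$ and the relations are the cubic $f_2=0$ and the quartic $[x_2,f_1]_{p_2}=0$; applying the swap $x_1\leftrightarrow x_2$ (the interchange of $x$ and $y$ in the statement) and using the near-symmetry of $\sfw$ under this swap turns this into a $k=2$-type presentation with $p_1$ replaced by $p_2$, yielding $A(-p_2)$ (resp.\ $D(-ap_2^2,-p_2)$); the shift from $a$ to $-ap_2^2$ records how the auxiliary scalar transforms under the rescaling that the swap forces. The main obstacle I anticipate is purely bookkeeping: normalizing $\sfw$ (defined only up to scalar) and choosing the rescaling of $x_1,x_2$ so that the half-powers $\alpha^{1/2}$ and $(-\alpha)^{1/2}$ in \cref{table-cubic} come out consistently in the cubic and the quartic relation simultaneously, together with tracking—in the $S_1$ case—that the auxiliary parameter maps precisely to $v$ rather than to some twist of it. Once the change of variables is pinned down by the cubic relation, no conceptual difficulty remains beyond carrying out these substitutions carefully.
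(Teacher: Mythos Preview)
Your approach is correct and essentially identical to the paper's: write out the two defining relations of $D(\sfw,\sfp)$ explicitly (the surviving cubic $f_i$ and the quartic $[x_j,f_k]_{p_j}$), simplify, and match against the presentations in \cite[Thm.~A]{LPWZ}. The paper carries this out only for type $S_2$ at $k=2$ and dismisses the remaining cases as straightforward, so your more systematic treatment (including the $k=1$ swap argument) is if anything more thorough.
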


\begin{proof}
	Let $A$ be the algebra of type $S_{2}$, $k=2$, and $p:=p_{1}=(-\alpha)^{1/2}$. ``The'' relations for $A$ are
	\begin{equation*}
		\begin{cases}
                  f_{1}=xy^{2}+\alpha y^{2}x=xy^{2}-p^{2}y^{2}x,\\
                  f_{2}=\alpha^{2} yx^{2}-\alpha x^{2}y.
		\end{cases}
	\end{equation*}
	Thus,  ``the'' relations for $D(\sfw,\sfp)$ are $f_{1}$ and
	\begin{align*}
          g_{2}&=x(\alpha^{2} yx^{2}-\alpha x^{2}y)-p(\alpha^{2} yx^{2}-\alpha x^{2}y)x\\
               &=p^{2}(x^{3}y+(-p)x^{2}yx+(-p)^{2}xyx^{2}+(-p)^{3}yx^{3}).
	\end{align*}
	This description agrees with the defining relation of $A(-p)$
        up to scalar multiplication. Similar routine calculations settle the other cases.
\end{proof}

\begin{table}
\begin{center}
\begin{tabular}{|c|c|c|c|c|}
\hline
Type & $\vphantom{\Big\vert}(q_1,q_2,q_3)$ & $k=1$ & $k=2$ & $k=3$
\\
\hline
$\vphantom{\Big\vert}A$ & $(1,1,1)$ & $(1,1,1)$, $(1,\zeta_3,\zeta_3^2)$ & $(1,1,1)$, $(\zeta_3^2,1,\zeta_3)$ & $(1,1,1)$, $(\zeta_3,\zeta_3^2,1)$
\\
$\vphantom{\Big\vert}B$ & $(1,1,-1)$ & $(1,1,\pm 1)$ & $(1,1,\pm 1)$ & $(-1,-1,-1)$
\\
$\vphantom{\Big\vert}E$ & $(\zeta_9,\zeta_9^4,\zeta_9^7)$ & none & none & none
\\
$\vphantom{\Big\vert}H$ & $(1,-1,\zeta_4)$ & $(1,1,\pm 1)$, $(1,-1,\pm\zeta_{4})$ & $(-1,-1,\pm 1)$ & $(-\zeta_4,-\zeta_4,\zeta_4)$
\\
$\vphantom{\Big\vert}S_1$ & $(\a,\b,(\a\b)^{-1})$ & $(\a,\l,\l^{-1})$ & $(\l,\b,\l^{-1})$ & $(\l,\l^{-1},(\a\b)^{-1})$
\\
$\vphantom{\Big\vert}S_1'$ & $(\a,\a^{-1},1)$ & $(\a,\a^{-1/3},\a^{1/3})$ & $(\a^{1/3},\a^{-1},\a^{-1/3})$ & $(\l,\l^{-1},1)$
\\
$S_2$ & $(\a,-\a,\a^{-2})$ & $(\a,\pm \a,\a^{-1})$ & $(\pm \a,-\a,-\a^{-1})$ & $\vphantom{\Big\vert}(1,\pm 1,\a^{-2})$, $(-1,\pm 1,\a^{-2})$
\\
\hline
\end{tabular}
\end{center}
\vskip .12in
\caption{Good $(p_1,p_2,p_3)$ for the generic quadratic AS-regular algebras in \cite[Table (3.11)]{AS87}}
\label{table-quadratic}
\end{table}


\bibliography{biblio3}

\def\cprime{$'$} \def\cprime{$'$} \def\cprime{$'$} \def\cprime{$'$}
  \def\cprime{$'$}
\begin{thebibliography}{10}

\bibitem{AS87}
M.~Artin and W.~F. Schelter.
\newblock Graded algebras of global dimension {$3$}.
\newblock {\em Adv. in Math.}, 66(2):171--216, 1987.

\bibitem{ATV1}
M.~Artin, J.~Tate, and M.~Van~den Bergh.
\newblock Some algebras associated to automorphisms of elliptic curves.
\newblock In {\em The {G}rothendieck {F}estschrift, {V}ol.\ {I}}, volume~86 of
  {\em Progr. Math.}, pages 33--85. Birkh\"auser Boston, Boston, MA, 1990.

\bibitem{ATV2}
M.~Artin, J.~Tate, and M.~Van~den Bergh.
\newblock Modules over regular algebras of dimension {$3$}.
\newblock {\em Invent. Math.}, 106(2):335--388, 1991.

\bibitem{Berger01}
R.~Berger.
\newblock Koszulity for non-quadratic algebras.
\newblock {\em J. Algebra}, 239(2):705--734, 2001.

\bibitem{bgm}
G.~M. Bergman.
\newblock The diamond lemma for ring theory.
\newblock {\em Adv. in Math.}, 29(2):178--218, 1978.

\bibitem{Boc08}
R.~Bocklandt.
\newblock Graded {C}alabi {Y}au algebras of dimension 3.
\newblock {\em J. Pure Appl. Algebra}, 212(1):14--32, 2008.

\bibitem{bsw}
R.~Bocklandt, T.~Schedler, and M.~Wemyss.
\newblock Superpotentials and higher order derivations.
\newblock {\em J. Pure Appl. Algebra}, 214(9):1501--1522, 2010.

\bibitem{Cas99}
T.~Cassidy.
\newblock Global dimension {$4$} extensions of {A}rtin-{S}chelter regular
  algebras.
\newblock {\em J. Algebra}, 220(1):225--254, 1999.

\bibitem{CS15}
A.~{Chirvasitu} and S.~P. {Smith}.
\newblock {Exotic Elliptic Algebras}.
\newblock {\em Trans.\ Amer.\ Math.\ Soc.}, 370, 2018.

\bibitem{dwz}
H.~Derksen, J.~Weyman, and A.~Zelevinsky.
\newblock Quivers with potentials and their representations. {I}. {M}utations.
\newblock {\em Selecta Math. (N.S.)}, 14(1):59--119, 2008.

\bibitem{dv1}
M.~Dubois-Violette.
\newblock Graded algebras and multilinear forms.
\newblock {\em C. R. Math. Acad. Sci. Paris}, 341(12):719--724, 2005.

\bibitem{dv2}
M.~Dubois-Violette.
\newblock Multilinear forms and graded algebras.
\newblock {\em J. Algebra}, 317(1):198--225, 2007.

\bibitem{e}
J.~M. Eisenschlos.
\newblock {\em 3-{C}alabi-{Y}au algebras from {S}teiner systems}.
\newblock PhD thesis, Universidad de Buenos Aires, 2013.

\bibitem{egno}
P.~Etingof, S.~Gelaki, D.~Nikshych, and V.~Ostrik.
\newblock {\em Tensor categories}, volume 205 of {\em Mathematical Surveys and
  Monographs}.
\newblock American Mathematical Society, Providence, RI, 2015.

\bibitem{gm}
S.~I. Gelfand and Y.~I. Manin.
\newblock {\em Methods of homological algebra}.
\newblock Springer-Verlag, Berlin, 1996.
\newblock Translated from the 1988 Russian original.

\bibitem{Hart}
R.~Hartshorne.
\newblock {\em Algebraic geometry}.
\newblock Springer-Verlag, New York-Heidelberg, 1977.
\newblock Graduate Texts in Mathematics, No. 52.

\bibitem{jz}
P.~J{\o}rgensen and J.~J. Zhang.
\newblock Gourmet's guide to {G}orensteinness.
\newblock {\em Adv. Math.}, 151(2):313--345, 2000.

\bibitem{LBSvdB}
L.~Le~Bruyn, S.~P. Smith, and M.~Van~den Bergh.
\newblock Central extensions of three-dimensional {A}rtin-{S}chelter regular
  algebras.
\newblock {\em Math. Z.}, 222(2):171--212, 1996.

\bibitem{lvdb-def}
W.~Lowen and M.~Van~den Bergh.
\newblock Deformation theory of abelian categories.
\newblock {\em Trans. Amer. Math. Soc.}, 358(12):5441--5483, 2006.

\bibitem{LPWZ}
D.-M. Lu, J.~H. Palmieri, Q.-S. Wu, and J.~J. Zhang.
\newblock Regular algebras of dimension 4 and their
  {$A_\infty$}-{E}xt-algebras.
\newblock {\em Duke Math. J.}, 137(3):537--584, 2007.

\bibitem{lmz-nak}
J.-F. L\"u, X.-F. Mao, and J.~J. Zhang.
\newblock Nakayama automorphism and applications.
\newblock {\em Trans. Amer. Math. Soc.}, 369(4):2425--2460, 2017.

\bibitem{Mat75}
J.~Matijevic.
\newblock Three local conditions on a graded ring.
\newblock {\em Trans. Amer. Math. Soc.}, 205:275--284, 1975.

\bibitem{mr}
J.~C. McConnell and J.~C. Robson.
\newblock {\em Noncommutative {N}oetherian rings}.
\newblock Pure and Applied Mathematics (New York). John Wiley \& Sons, Ltd.,
  Chichester, 1987.
\newblock With the cooperation of L. W. Small, A Wiley-Interscience
  Publication.

\bibitem{MS16}
I.~Mori and S.~P. Smith.
\newblock {$m$}-{K}oszul {A}rtin-{S}chelter regular algebras.
\newblock {\em J. Algebra}, 446:373--399, 2016.

\bibitem{ms-cy}
I.~Mori and S.~P. Smith.
\newblock The classification of 3-{C}alabi--{Y}au algebras with 3 generators
  and 3 quadratic relations.
\newblock {\em Math. Zeit.}, 287(1-2):215--241, 2017.

\bibitem{mu-cy}
I.~Mori and K.~Ueyama.
\newblock The classification of 3-dimensional noetherian cubic {C}alabi-{Y}au
  algebras.
\newblock {\em J. Pure Appl. Algebra}, 223(5):1946--1965, 2019.

\bibitem{RRZ14}
M.~Reyes, D.~Rogalski, and J.~J. Zhang.
\newblock Skew {C}alabi-{Y}au algebras and homological identities.
\newblock {\em Adv. Math.}, 264:308--354, 2014.

\bibitem{RRZ17}
M.~Reyes, D.~Rogalski, and J.~J. Zhang.
\newblock Skew {C}alabi-{Y}au triangulated categories and {F}robenius
  {E}xt-algebras.
\newblock {\em Trans. Amer. Math. Soc.}, 369(1):309--340, 2017.

\bibitem{SmG2}
S.~P. {Smith}.
\newblock {A 3-Calabi-Yau algebra with {$G_2$} symmetry constructed from the
  octonions}.
\newblock arXiv:1104.3824.

\bibitem{SZ97}
D.~R. Stephenson and J.~J. Zhang.
\newblock Growth of graded {N}oetherian rings.
\newblock {\em Proc. Amer. Math. Soc.}, 125(6):1593--1605, 1997.

\bibitem{SA11}
M.~Suarez-Alvarez.
\newblock 3-{C}alabi-{Y}au algebras from {S}teiner triple systems.
\newblock Preprint, May 2011.

\bibitem{wz-skw}
Q.-S. Wu and C.~Zhu.
\newblock Skew group algebras of {C}alabi-{Y}au algebras.
\newblock {\em J. Algebra}, 340:53--76, 2011.

\bibitem{YZ06}
A.~Yekutieli and J.~J. Zhang.
\newblock Homological transcendence degree.
\newblock {\em Proc. London Math. Soc. (3)}, 93(1):105--137, 2006.

\bibitem{Z-twist}
J.~J. Zhang.
\newblock Twisted graded algebras and equivalences of graded categories.
\newblock {\em Proc. London Math. Soc. (3)}, 72(2):281--311, 1996.

\end{thebibliography}
\bibliographystyle{customplain}

\end{document}